\newcommand{\PreserveBackslash}[1]{\let\temp=\\#1\let\\=\temp}
\newcolumntype{C}[1]{>{\PreserveBackslash\centering}p{#1}}
\newcommand{\blue}[1]{{\color{blue}#1}}
\newcommand{\red}[1]{{\color{red}#1}}
\DeclareMathOperator*{\argmax}{argmax}
\DeclareMathOperator*{\argmin}{argmin} % no space, limits underneath in displays
\def\0{\mathbf{0}}
\def\cD{\mathcal{D}}
\def\Dbb{\mathbb{D}}
\def\Ebb{\mathbb{E}}
\def\hG{\hat{G}}
\def\cG{\mathcal{G}}
\def\cH{\mathcal{H}}
\def\bI{\mathbf{I}}
\def\cL{\mathcal{L}}
\def\Lbb{\mathbb{L}}
\def\real{\mathbb{R}}
\def\cS{\mathcal{S}}
\def\cW{\mathcal{W}}
\def\vx{\bm{x}}
\def\cX{\mathcal{X}}
\def\cY{\mathcal{Y}}
\def\vtheta{{\bm{\theta}}}
\def\hvtheta{\hat{\vtheta}}
\def\vphi{{\bm{\phi}}}
\def\hvphi{\hat{\vphi}}
\def\Var{\text{Var}}
\def\KL{\text{KL}}
\newcommand{\indep}{\;\, \rule[0em]{.03em}{.67em} \hspace{-.25em}
	\rule[0em]{.65em}{.03em} \hspace{-.25em}
	\rule[0em]{.03em}{.67em}\;\,}
\newtheorem{theorem}{Theorem}[section]
\newtheorem{lemma}{Lemma}[section]
\newtheorem{lemma*}{Lemma}
\newtheorem{corollary}{Corollary}[section]
\newcommand{\blind}{1}
\newcommand{\PreserveBackslash}[1]{\let\temp=\\#1\let\\=\temp}
\newcolumntype{C}[1]{>{\PreserveBackslash\centering}p{#1}}
\newcommand{\blue}[1]{{\color{blue}#1}}
\newcommand{\red}[1]{{\color{red}#1}}
\colorlet{tableheadcolor}{gray!25}
\newcommand{\headcol}{\rowcolor{tableheadcolor}} %
\colorlet{tablerowcolor}{gray!10}
\newcommand{\topline}{\arrayrulecolor{black}\specialrule{0.1em}{\abovetopsep}{0pt}%
	\arrayrulecolor{tableheadcolor}\specialrule{\belowrulesep}{0pt}{0pt}%
	\arrayrulecolor{black}}
\newcommand{\midline}{\arrayrulecolor{tableheadcolor}\specialrule{\aboverulesep}{0pt}{0pt}%
	\arrayrulecolor{black}\specialrule{\lightrulewidth}{0pt}{0pt}%
	\arrayrulecolor{white}\specialrule{\belowrulesep}{0pt}{0pt}%
	\arrayrulecolor{black}}
\newcommand{\bottomline}{\arrayrulecolor{white}\specialrule{\aboverulesep}{0pt}{0pt}%
	\arrayrulecolor{black}\specialrule{\heavyrulewidth}{0pt}{\belowbottomsep}}%
\begin{document}

\def\spacingset#1{\renewcommand{\baselinestretch}%
{#1}\small\normalsize} \spacingset{1}

\if1\blind
{
\title{A Deep Generative Approach to Conditional Sampling}
\author{Xingyu Zhou\thanks{Xingyu Zhou and Yuling Jiao constributed equally to this work.} \thanks{Department of Statistics and Actuarial Science, University of Iowa, Iowa City, Iowa 52242, USA. Email: xingyu-zhou@uiowa.edu}
\ Yuling Jiao$^*$\thanks{School of Mathematics and Statistics, Wuhan University, Wuhan, Hubei, China 430072.  Email: yulingjiaomath@whu.edu.cn}
\ Jin Liu\thanks{Centre for Quantitative Medicine,
Duke-NUS Medical School, Singapore. Email: jin.liu@duke-nus.edu.sg} \ and
 Jian Huang\thanks{Corresponding author. Department of Statistics and Actuarial Science, University of Iowa, Iowa City, Iowa 52242, USA. Email: jian-huang@uiowa.edu}}
\date{\today}

  \maketitle
} \fi

\if0\blind
{
  \bigskip
  \bigskip
  \bigskip
  \begin{center}
    {\Large \bf A Deep Generative Approach to Conditional Sampling}
%Generative Conditional Distribution Estimation}
\end{center}
  \medskip
} \fi

%\bigskip

\begin{abstract}
We propose a deep generative approach to sampling from a conditional distribution based on a unified formulation of conditional distribution and generalized nonparametric regression function using the noise-outsourcing lemma. The proposed approach aims at learning a conditional generator so that a random sample from the target conditional distribution can be obtained by the action of the  conditional generator on a sample drawn  from a reference distribution. The conditional generator is estimated nonparametrically with neural networks by matching appropriate joint distributions using Kullback-Liebler divergence. An appealing aspect of our method is that it allows either of or both the predictor and the response to be high-dimensional and can handle both continuous and discrete type predictors and responses. We show that the proposed method is consistent in the sense that the conditional generator converges in distribution to the underlying conditional distribution under mild conditions. Our numerical experiments with simulated and benchmark image data validate the proposed method and demonstrate that it outperforms several existing conditional density estimation methods.
%We also demonstrate the proposed method on image generation %given labels or part of the image using the benchmark MNIST %dataset.
\end{abstract}

{\it Keywords:}  Distribution matching;  Generative learning; %$f$-divergence;
 High-dimensional data; Nonparametric estimation; Neural networks.

\vfill

\newpage
\spacingset{1.5} % DON'T change the spacing!
\section{Introduction}

A fundamental problem in statistics and machine learning is how to model the relationship between a response  $Y$ and a predictor $X$. Such a model can be used for predicting the values of $Y$ based on the new observations of $X$ and for assessing the variation in $Y$ for a given value of $X$.
Regression models that focus on estimating the conditional mean or median of the response given the predictor have been widely used for such purposes. However, in problems when the conditional distribution is multimodal or asymmetrical, conditional mean and median are no longer adequate for modeling the relationship between $Y$ and $X$. In general, to completely understand how the response depends on the predictor, it becomes necessary to learn the conditional distribution, which provides a full description of the relationship between the response variable and the predictor.
Conditional distribution also plays a central role in many important areas, including
representation learning \citep{bengio2013representation},
sufficient dimension reduction \citep{li1991sliced, cook1998regression}, graphical models \citep{bishop2006},  nonlinear independent component analysis \citep{hyva1999},  among others.

In this paper, we propose a nonparametric generative approach to sampling from a conditional distribution. For convenience, we shall refer to the proposed method as the generative conditional distribution sampler (GCDS). For a given value of the predictor  $X=x$, GCDS aims at estimating a function $G(\eta, x)$ of $\eta$ and $x$,   where $\eta$ is a random variable from a simple reference distribution such as normal or uniform,  such that  $G(\eta, x)$ follows the conditional distribution of $Y$ given $X=x$.  Such a function is called a conditional generator. To sample from the conditional distribution, we only need to calculate $G(\eta, x)$ after generating $\eta$ from the reference distribution. Therefore, the conditional generator $G$ contains all the information about the conditional distribution of $Y$ given $X$.
%For convenience, we shall refer to $G$ as a conditional distribution sampler.
We estimate the conditional generator $G$ nonparametrically using neural networks.
%It is a nonparametric method since it does not make any parametric assumptions on the %joint distribution, the conditional distributions and the marginal distributions of $X$ and %$Y$.
An appealing feature of GCDS is that it is applicable to the settings when either of or both $X$ and $Y$ are high-dimensional such as in the problems of image data analysis.

There is an extensive literature on nonparametric conditional density estimation. The prevailing approaches are based on smoothing methods, including kernel smoothing and local polynomials
\citep{rosenblatt1969, scott1992,
%fan1996,
hyndman1996estimating, chen2001,
hall2005, bott2017}. Typically, the joint density of $(X, Y)$ and the marginal density of $X$ are first estimated using unconditional kernel density estimators. Then, the conditional density estimator is obtained as the ratio of the estimated joint density over the estimated marginal density.
Another approach is to transform the problem of estimating a conditional density to a suitably formulated  regression problem \citep{fan1996, fan2004} and use the method for nonparametric regression for conditional density estimation.
Nearest neighbors method has also been used in estimating conditional density and conditional quantiles through kernel smoothing
\citep{%loft1965,
zhao1985, bhattacharya1990}.
Approaches based on  expanding the conditional density function in terms of certain basis functions have also been developed  % in Hilbert space
 \citep{sugi2010, izbicki2016nonparametric}.
The method proposed by \citet{izbicki2017converting} approximates the conditional density using orthogonal basis functions and transform the problem of conditional density estimation into a regression problem. A common feature of these methods is that they seek to estimate the functional form of the conditional density.

However, these existing conditional density estimation
methods %based on smoothing and basis expansion techniques
do not work well for problems with high-dimensional data.
%have two main drawbacks.  First, they tend to perform poorly in %regions where data is sparse which especially becomes evident in %the tail regions of the distribution. Second, these estimators
In particular, they suffer from the so-called ``curse of dimensionality'', that is, their performance deteriorates dramatically as the dimensionality of the dependent variable or the response variable becomes relatively large. Indeed, most conditional density estimators can only effectively handle up to a few covariates. %\citep{fan2009acta}.
In addition, most of these methods focus on the case when the response $Y$ is a scalar variable and cannot handle the case of a high-dimensional response vector.

The proposed approach is inspired by the recently developed {generative adversarial networks}
 (GAN) \citep{goodfellow14}.
Instead of estimating the functional form of the conditional density, GCDS is a generative learning approach that seeks to estimate a conditional sampler.
The basis of GCDS is a unified formulation of the conditional density estimation and the generalized nonparametric regression based on the noise-outsourcing lemma in probability theory \citep{kall2002, austin2015}. By this lemma, the problem of nonparametric conditional density estimation is equivalent to a generalized nonparametric regression problem. This equivalency implies that, for any given $X=x$,  we can estimate the conditional generator $G(\eta, x)$ so that a random sample from the conditional distribution can be obtained based on this function using a random sample $\eta$ from a reference distribution, such as the uniform or the standard normal distribution. The estimation of the conditional generator $G$ is achieved through matching appropriate joint distributions using the Kullback-Liebler divergence and its variational form.
We take advantage of the abilities of neural networks in approximating high-dimensional functions and estimate $G$ nonparametrically using deep neural networks.

There are several advantages of GCDS over the classical methods for conditional density estimation. First, there is no restriction on the dimensionality of the response variable, while the classical methods typically only consider the case of a scalar response variable. Indeed, our methods allow either of or both  the predictor and the response to be high-dimensional. Second, GCDS can handle both continuous and discrete type predictors and responses. %while smoothing based nonparametric conditional density estimators can only deal with %continuous type predictors.
Third, since our method learns a generative function for the underlying conditional distribution based on a simple reference distribution, it is easy to obtain estimates of the summary measures of the underlying conditional distribution, including the conditional moments and quantiles by Monte Carlo.
%, and it is convenient to assess the uncertainties of such estimated quantities.
In comparison, it is cumbersome to do so based on the traditional conditional density estimation methods, since it involves numerical integrations that are difficult to implement in high-dimensional settings. Fourth, we demonstrate that the proposed method works for complex and high-dimensional data problems such as image generation and reconstruction. The traditional conditional density estimation methods are not able to deal with such problems. Finally, we show that  GCDS is consistent in the sense that the samples it generates converge weakly to the underlying target conditional distribution. To the best of our knowledge, such a result is the first of its kind in the context of deep generative learning.

In the remainder of this paper, we first describe a generative representation of conditional distribution based on the noise-outsourcing lemma, and explain that sampling from a conditional distribution can be achieved by using a conditional generator. This provides the theoretical foundation for the
distribution matching method proposed in Section 3. The distribution matching is carried out by using the variational form of the $f$-divergence, which includes the Kullback-Liebler divergence as a special case. In Section 4 we establish the consistency of the conditional generator in the sense that the joint distribution of $X$ and generated sample converges to the joint distribution of $(X,Y)$. In Section 5 we conduct extensive simulation studies to evaluate the finite sample performance
of the proposed method and illustrate its application to an image generation and reconstruction problems using  benchmark image data. Concluding remarks are given in Section 6. Additional numerical experiment results and technical details are provided in the supplementary material.

\section{Generative representation of conditional distribution}

Consider a pair of random vectors  $(X, Y) \in \cX \times \cY$,  where $X$ is a vector of predictors and $Y$ is a vector of response variables or labels. For regression problems, we have $\cY \subseteq \real^q$ with $q \ge 1$;  for classification problems, $\cY$ is a set of finite many labels. We assume the space $\cX \subseteq \real^d$ with $d \ge 1$. The predictor $X$ can contain both continuous and categorical components.
Suppose $(X, Y) \sim P_{X, Y}$ with marginal distributions
$X \sim P_X$ and $Y\sim P_Y$. Denote the conditional distribution of $Y$ given $X$ by $P_{Y|X}$. For a given value $x$ of $X$, we also write the conditional distribution as $P_{Y|X=x}$.
Let $\eta$ be a random vector independent of $X$ with a known distribution $P_{\eta}$.
For example, we can take $P_{\eta}$ to be the standard  multivariate normal $N(\0,
\bI_m),$ for a given $m \ge 1.$ We note that $m$ does not need to be the same as $q$, the dimension of $Y.$
%$\eta \sim P_{\eta} \equiv \text{Uniform}[0,1]^m, m \ge 1$.

Our goal is to find a function  $G: \real^m \times \cX \mapsto \cY$ such that the conditional distribution of $G(\eta,X)$ given $X=x$
is the same as the conditional distribution of $Y$ given $X=x$.
Since $\eta$ is independent of $X$, it is equivalent to finding a $G$ such that
\begin{equation}
\label{dm1a}
G(\eta, x) \sim P_{Y|X=x},\ x \in \cX,
\end{equation}
%\textcolor{red}{Shall we emphasize that this is for any x, i.e. %$\forall x\in\cX$?}
Thus to sample from the conditional distribution $P_{Y|X=x}$, we can first sample an $\eta\sim P_{\eta}$, then calculate $G(\eta, x)$. The resulting value $G(\eta, x)$  is a sample from
$ P_{Y|X=x}.$

Does such a function $G$ exist? The existence of $G$ is guaranteed by the noise-outsourcing lemma from probability theory under minimal conditions (Theorem 5.10 in
%and Corollary 5.13 of
\cite{kall2002},  Lemma 3.1 in \cite{austin2015}).

\begin{lemma}
\label{NOLemma}
(Noise-outsourcing lemma). Let  $(X, Y)$ be a random pair taking values in $\cX\times \cY$ with joint distribution $P_{X,Y}$.
%Suppose $\cX$ and $\cY$ are standard Borel spaces.
Suppose $\cY$ is a standard Borel space.
Then there exist a random vector $\eta \sim P_{\eta}=N(\0, \bI_m)$ for any given $m \ge 1$
and a Borel-measurable function $G : \real^m \times \cX \to \cY$ such that $\eta$ is independent of $X$ and
\begin{equation}
\label{NOa}
(X, Y)= (X, G(\eta, X))  \ \text{almost surely.}
\end{equation}
\end{lemma}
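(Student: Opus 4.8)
The plan is an explicit construction carried out in four moves: (i) reduce the abstract standard Borel space $\cY$ to a subset of the real line; (ii) build a $\mathrm{Uniform}(0,1)$ noise variable, manufactured out of $Y$ together with an auxiliary uniform, that is independent of $X$ and from which $Y$ can be recovered; (iii) express $Y$ as a measurable function of $X$ and this noise via a conditional quantile function; and (iv) repackage the uniform noise as the prescribed Gaussian noise $N(\0,\bI_m)$ by adjoining extra independent coordinates on an enlargement of the probability space.

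First I would invoke the structure theorem for standard Borel spaces: $\cY$ is Borel-isomorphic to a Borel subset $B\subseteq[0,1]$. Fixing such an isomorphism $\varphi:\cY\to B$ and working with $\varphi(Y)$ in place of $Y$, it suffices to treat the case $\cY\subseteq[0,1]$, since post-composing the resulting generator with $\varphi^{-1}$ handles the general case (with a harmless fixed-point convention off a null set so that $G$ takes values in $\cY$). Because $\cY$ is standard Borel, a regular conditional distribution $\kappa(x,\cdot)=P_{Y|X=x}$ exists; let $F(x,t):=\kappa(x,(-\infty,t])$ be the associated conditional CDF and $F^{-1}(x,u):=\inf\{t:F(x,t)\ge u\}$ its generalized inverse. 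Right-continuity of $F(x,\cdot)$ gives $\{(x,u):F^{-1}(x,u)\le t\}=\{(x,u):u\le F(x,t)\}$, so both $F$ and $F^{-1}$ are jointly Borel-measurable.

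Next, on an enlargement of the probability space (needed in general, since the original space need not carry an independent continuous variable) I would introduce $V\sim\mathrm{Uniform}(0,1)$ independent of $(X,Y)$ and set $U:=F(X,Y-)+V\,[F(X,Y)-F(X,Y-)]$, the distributional transform of $Y$ relative to its conditional law given $X$. The two facts to establish are that, conditionally on $X=x$, $U$ is $\mathrm{Uniform}(0,1)$ — so that $U$ is $\mathrm{Uniform}(0,1)$ and $U\indep X$ — and that $F^{-1}(X,U)=Y$ almost surely; both follow from the classical distributional-transform identity applied to the CDF $F(x,\cdot)$ and integrated over the law of $X$. Finally, put $\eta_1:=\Phi^{-1}(U)$ with $\Phi$ the standard normal CDF (then $\eta_1\sim N(0,1)$ and $\eta_1\indep X$), adjoin $(\eta_2,\dots,\eta_m)\sim N(\0,\bI_{m-1})$ independent of $(X,Y,V,U)$ on a further enlargement, and set $\eta:=(\eta_1,\dots,\eta_m)$; a short computation shows $\eta\sim N(\0,\bI_m)$ and $\eta\indep X$. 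Defining $G(e,x):=\varphi^{-1}\!\big(F^{-1}(x,\Phi(e_1))\big)$ yields a Borel-measurable $G:\real^m\times\cX\to\cY$ with $G(\eta,X)=\varphi^{-1}(F^{-1}(X,U))=Y$ almost surely, which is exactly \eqref{NOa}.

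I expect the main obstacle to be the distributional-transform step: verifying the conditional uniformity of $U$ and the a.s. identity $F^{-1}(X,U)=Y$ in full generality, i.e., when the conditional laws $\kappa(x,\cdot)$ have atoms, where the $V$-randomization is precisely what repairs the jumps of $F(x,\cdot)$. The remaining ingredients — the Borel-isomorphism reduction, existence of the regular conditional distribution on a standard Borel space, joint measurability of $F$ and $F^{-1}$, and the enlargement carrying the auxiliary randomness — are routine measure-theoretic facts. Alternatively, one may simply cite the result as Theorem 5.10 of \cite{kall2002} or Lemma 3.1 of \cite{austin2015}; the construction above is in essence their proof.
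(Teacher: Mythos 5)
Your proof is correct, and it takes a more self-contained route than the paper. The paper's own argument is a two-step reduction: it invokes the classical uniform-noise version of the lemma (Theorem 5.10 of Kallenberg / Lemma 3.1 of Austin) as a black box to get $Y=G_1(u,X)$ a.s.\ with $u\sim\mathrm{Uniform}[0,1]$ independent of $X$, and then composes with a map $G_2$ turning a Gaussian coordinate into that uniform, setting $G=G_1(G_2(\cdot),\cdot)$. You instead re-derive the classical lemma from scratch: Borel-isomorphism of $\cY$ into $[0,1]$, regular conditional distribution, and the conditional distributional transform $U=F(X,Y-)+V\,[F(X,Y)-F(X,Y-)]$ with the quantile inversion $F^{-1}(X,U)=Y$; your final uniform-to-Gaussian step ($\eta_1=\Phi^{-1}(U)$, pad with independent Gaussian coordinates, use $\Phi(e_1)$ inside $G$) is then identical in substance to the paper's. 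What the paper's version buys is brevity, deferring all measure theory to the citation; what yours buys is an explicit construction that surfaces exactly where the auxiliary randomization $V$ (needed when the conditional laws have atoms) and the enlargement of the probability space enter — points the paper glosses over. You also handle two details more carefully than the paper's write-up: the Gaussian $\eta$ must be \emph{constructed} coupled to $u$ (via $\eta_1=\Phi^{-1}(u)$) rather than taken as an arbitrary $N(\0,\bI_m)$ vector for which $u=G_2(\eta)$ a.s., and the correct transform is $\Phi(\eta_1)\sim\mathrm{Uniform}[0,1]$ (the paper's displayed $\Phi^{-1}(\eta_1)$ is a typo). Your closing remark that one may simply cite the classical result is exactly what the paper does.
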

Because $\eta$ and $X$ are independent,
%of the independence between $\eta$ and $X$,
any $G$ satisfying \eqref{NOa}  also satisfies \eqref{dm1a}, that is, $G(\cdot, x)$ is a conditional generator for  $P_{Y|X=x}.$
%From a statistical standpoint, we only need the equality in distribution in \eqref{NOa}, %that is, it is sufficient for $G$ to satisfy $(X, Y) = (X, G(\eta, Y))$ in distribution.
%In general, the function $G$ in \eqref{NOa} is not unique. However, the %non-uniqueness of $G$ does not %cause any problem in the proposed approach.
%
In the original noise-outsourcing lemma, the distribution $P_{\eta}$ is a
uniform distribution on $[0, 1]$. In Lemma \ref{NOLemma}, $P_{\eta}$ is taken to  be
$N(\0, \bI_m)$ with $m \ge 1$. This is more convenient in applying GCDS
%implementing the proposed method computationally and in applications
 when it is better to use a random vector in stead of a random scalar as the noise source.
In the supplementary material, we show that we can indeed take $P_{\eta}$ to be
 $N(\0, \bI_m)$
 %for some $m \ge 1$
based on the original noise-outsourcing lemma.
% with $P_{\eta}=\text{Uniform}\,[0,1].$

%Since a multivariate normal random vector can be obtained based on a uniform random variable, %we can take $P_{\eta}$ to be $N(\0, \bI_m), m \ge 1.$
Lemma \ref{NOLemma} provides a unified view of
conditional distribution estimation and (generalized) nonparametric regression. To see this, it is informative to reverse the order of \eqref{dm1a} and write it as
\begin{equation}
\label{dm1b}
Y|X=x \sim G(\eta, x), \ x \in \cX.
\end{equation}
This expression shows that the problem of finding $G$ is similar to that of estimating a generalized  regression function nonparametrically by matching the conditional distributions. Therefore, $G$ can also be considered a generalized regression function.
The standard formulation of nonparametric regression with an additive error is a special case of \eqref{dm1b}.
Indeed, if we assume $G(\eta, x)=G_0(x)+\eta$ with $\Ebb(\eta|X)=0$, then \eqref{dm1b}
leads to the standard nonparametric regression model
$\Ebb(Y|X=x) = G_0(x)$.

Sampling from a conditional distribution generally cannot be done by simply using the existing methods for sampling from an unconditional distribution. This can be illustrated as follows. For any given $x$, the problem is to find a function $G_x(\eta)$ such that
$
G_x(\eta) \sim P_{Y|X=x},
$
where we use $x$ as the subscript of $G$ to indicate that the form of $G$ depends on $x$.
If $X$ is a discrete random variable and only takes the values in a finite set, then we can simply find the function $G_x$ for each $x$ using the existing generative methods such as  GAN  \citep{goodfellow14}.
However, this is not feasible if $X$ is a continuous-type random variable.
In general, the methods for generating samples from an unconditional distribution cannot be directly applied to find a function of $\eta$ for generating samples from the conditional distribution $P_{Y|X}$.

To get around this difficulty, we note that matching the conditional  distribution of $G(\eta, x)$  with $P_{Y|x}$ for a given $x \in \cX$ is equivalent to matching the joint distribution of $(X, G(\eta, X))$ and the joint distribution of $(X, Y)$,  if the same marginal distribution of $X$ is involved. This can be easily seen as follows.
Let $T=G(\eta, X)$. Then $P_{T|X}=P_{Y|X}$ if and only if
$ P_{T|X}P_X=P_{Y|X}P_X$ on the support of $(X, Y)$, that is, $P_{X, T}=P_{X, Y}$.
We summarize this simple but key observation in the following lemma.

\begin{lemma}
\label{distm}
Suppose that $\eta$ is independent of $X$.  Then $G(\eta, x) \sim P_{Y|X=x},\ x \in \cX$ if and only if
\begin{equation}
\label{dm2}
(X, G(\eta, X)) \sim (X, Y).
\end{equation}
\end{lemma}
Because of (\ref{dm2}), we refer to $G$ as a conditional generator, since given $X=x$, $G(\eta, x) \sim P_{Y|X=x}.$
Lemma \ref{distm} shows that finding a $G$ such that (\ref{dm1a}) holds amounts to finding a $G$ such that the joint distribution of $(X, G(\eta, X))$ is the same as that of $(X, Y).$

It is clear that the conditional generator satisfying \eqref{dm2} contains all the information about the conditional distribution of $Y$ given $X$.
For example, consider
%the expected regression function, defined as
the conditional expectation $g(x)=\Ebb(Y|X=x)$ and the conditional variance
$v(x)=\Var(Y|X=x)$.
%\begin{equation}
%\label{regb}
%g(x) = \Ebb(Y|X=x) \  \text{ and } \ v(x) = \Var(Y|X=x).
%\end{equation}
By  \eqref{dm2}, we have
$g(x)= \Ebb_{\eta\sim P_{\eta}}G(\eta,x)$  and
$v(x) = \Var_{\eta}[G(x,\eta)].$
Therefore, we can calculate $g$ and $v$ based on $G$.
Although it is difficult to calculate these functions exactly, it is easy to approximate them via Monte Carlo. Specifically, let $\eta_1, \ldots, \eta_J$ be a random sample generated from $P_{\eta}$, then we can approximate $g(x)$ and $v(x)$ by
\begin{equation}
\label{mont}
\tilde{g}(x) = \frac{1}{J}\sum_{j=1}^J G(\eta_j, x)
\ \text{ and } \
\tilde{v}(x) = \frac{1}{J} \sum_{j=1}^J [G(\eta_j,x)-\tilde{g}(x)]^2.
\end{equation}
Since it is easy and inexpensive to generate random samples from
$P_{\eta}$,
%we can obtain accurate approximations by using a large value of %$J$. In addition,
for any given $x$ we can easily accurately approximate the summary measures such as moments and quantiles of the conditional distribution $P_{Y|X=x}$ based on $\{G(\eta_j, x), j=1, \ldots, J \}$ for a sufficiently large $J$.

\section{Distribution matching estimation}

\subsection{Adversarial generative networks}
The generative adversarial networks  (GAN) \citep{goodfellow14}
is an approach to learning a high-dimensional (unconditional) distribution. It is formulated as a minimax adversarial game between two players, a generator $G$ and a discriminator $D$. The discriminator $D$ is parameterized using a neural network that serves as a witness to distinguish between a sample $Y$ from the data distribution and a sample from the generative model. The generator $G(\eta)$ maps samples $\eta$ from the reference distribution $P_{\eta}$ to the data distribution.
The generator $G$ is trained to maximally confuse the discriminator into believing that samples it generates come from the data distribution.
Formally, GAN solves the minimax optimization problem:
\begin{equation}
\label{gan1}
\min_G \max_D \Ebb_{Y\sim P_{\text{data}}}\log D(Y)
+\Ebb_{\eta \sim P_{\eta}}\log[1-D(G(\eta))].
\end{equation}

Conditional generative adversarial networks (cGAN) \citep{mirza2014cgan} estimate the distribution of the images conditioning on some auxiliary information, especially class labels. Similar to GAN, it solves a two-player minimax game
using an objective function with the same form as (\ref{gan1}).
See equation (2) in \citet{mirza2014cgan}.
cGAN performs the conditioning by feeding the class label information into the neural networks for the discriminator and the generator as additional input layer.
However, cGAN does not work well for data generation with continuous conditions.

\subsection{$f$-divergence and its variational form}

While the minimax formulation has an attractive intuitive interpretation as a two-player game between the generator and the discriminator, it is helpful to understand it as the dual form of the primal problem of minimizing the Jensen-Shannon divergence between the data distribution and the distribution of the generator \citep{goodfellow14}. By considering a general discrepancy measure between two distributions such as the $f$-divergence, one can formulate a class of generative learning methods including GAN as a special case \citep{nowozin2016f}.

By Lemma \ref{distm}, we can estimate $G$ by matching the distribution of $(X, G(\eta, X))$ with the distribution of $(X, Y).$ For this purpose, we first describe the $f$-divergence and its variational form.
Let $P$ and $Q$ be two probability distributions on $\mathbb{R}^{d}$.
%with $P$ absolutely continuous with respect to $Q$.
Let $p$ and $q$ be the density functions of  $P$ and $Q$  with respect to a common dominant %Lebesgue
measure, respectively.
%A class of divergence measures are
Suppose $Q$ is absolutely continuous with respect to $P.$
The $f$-divergence \citep{ali1966general} of $Q$ with respect to $P$ is defined by
\begin{equation}\label{fdiv}
\mathbb{D}_f(q\Vert p) = \int f\left(\frac{q(z)}{p(z)}\right) p(z) dz,
\end{equation}
where $f: \real^+ \rightarrow \real $ is a
convex function with $f(1) = 0$ and is strictly convex at $x=1$.
A basic property of the $f$-divergence following from Jensen's inequality is that
$\Dbb_f(q\Vert p) \ge 0$ for every $q, p$ and $\Dbb_f(q\Vert p) = 0$ if and only if $q=p$.

The Kullbak-Liebler (KL) divergence is an important special case with $f(x)=x\log x$, which has a simple expression
\begin{equation}
\label{KLc}
\Dbb_{\text{KL}}(q \Vert p) =\int \frac{q(z)}{p(z)}\log \left(\frac{q(z)}{p(z)}\right) p(z) dz
= \int \log \left(\frac{q(z)}{p(z)}\right) q(z) dz.
\end{equation}
%
%\[
%\Dbb_{KL}(q\Vert p)=\int q(z)\log\left(\frac{q(z)}{p(z)}\right)dz,
%\]
%which can be obtained from (\ref{fdiv}) by taking
%$f(x)= x\log x$.
Let $r = {q}/{p}$ be the density ratio of the densities $q$ and $p$. It is convenient to express the KL divergence as
\[
\Dbb_{\text{KL}}(q \Vert p) =\int \log \left(\frac{q(z)}{p(z)}\right) q(z) dz=
\Ebb_{Z\sim q} [\log r(Z)].
\]
%In general, the $f$-divergence (\ref{fdiv}) can be written in term of the density ratio $r$ as
%\begin{equation}
%\label{KLb}
%\Dbb_{f}(q\Vert p) = \int \frac{p(z)}{q(z)}f \left(\frac{q(z)}{p(z)}\right) q(z)dz
%= \Ebb_{Z\sim q}[r^{-1}(Z) f (r(Z))].
%\end{equation}

A useful representation of the $f$-divergence is its variational form. We will use it to construct an objective function for training the conditional generator $G$. The variational form is based on  the Fenchel conjugate of $f$ \citep{rockafellar1970convex},
defined as
\(
f^*(t) = \sup_{x \in \real}\{ t x - f(x)\}, t\in \real.
\)
Then the $f$-divergence has the following variational formulation \citep{keziou2003dual,nguyen2010estimating}. %,nowozin2016f}.
\begin{lemma}\label{lem2}
Let $\cD$ be a class of measurable functions $D: \real^d \to \real$.
Suppose $f$ is a differentiable convex function. Then
\begin{equation}\label{fdual}
	\mathbb{D}_f(q\Vert p) \ge  \sup_{D \in \cD}
%:\mathbb{R}^{d}\rightarrow \mathrm{dom}(f^*)}
[\mathbb{E}_{Z\sim q} D(Z)-\mathbb{E}_{W\sim p} f^*(D(W))],
	\end{equation}
where the equality holds if and only if
$ f^{\prime}(q/p) \in \cD$ and the supremum is attained at
$D^* = f^{\prime}(q/p).$
\end{lemma}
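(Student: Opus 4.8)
The plan is to derive the bound \eqref{fdual} by integrating the pointwise Fenchel--Young inequality and to read off the equality condition from its equality case; nothing deeper than convex duality is required. Recall that, by definition of the Fenchel conjugate $f^*(t)=\sup_x\{tx-f(x)\}$, one has $f(x)+f^*(t)\ge tx$ for every $x$ in the domain of $f$ and every $t\in\real$, and, since $f$ is convex and differentiable so that the subdifferential $\partial f(x)=\{f'(x)\}$, equality holds precisely when $t=f'(x)$.

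For the inequality, write $r(z)=q(z)/p(z)$; because $Q\ll P$, the set $\{p=0\}$ carries neither $p$- nor $q$-mass and can be discarded. Fix an arbitrary $D\in\cD$ and apply Fenchel--Young with $x=r(z)$ and $t=D(z)$ to get $f(r(z))\ge D(z)\,r(z)-f^*(D(z))$ for $p$-a.e.\ $z$. Multiplying by $p(z)\ge 0$ and integrating,
\[
\mathbb{D}_f(q\Vert p)=\int f(r(z))\,p(z)\,dz \ \ge\ \int D(z)\,q(z)\,dz-\int f^*(D(z))\,p(z)\,dz=\Ebb_{Z\sim q}D(Z)-\Ebb_{W\sim p}f^*(D(W)).
\]
If the right-hand side equals $-\infty$ the bound is trivial, and otherwise this same inequality shows the two integrals are well defined; taking the supremum over $D\in\cD$ gives \eqref{fdual}.

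For equality, set $D^*(z)=f'(r(z))$, which is measurable. The equality case of Fenchel--Young gives $f(r(z))=D^*(z)r(z)-f^*(D^*(z))$ $p$-a.e., i.e.\ $f^*(D^*(z))=r(z)f'(r(z))-f(r(z))$, so whenever $D^*\in\cD$,
\[
\Ebb_{Z\sim q}D^*(Z)-\Ebb_{W\sim p}f^*(D^*(W))=\int f'(r)\,q\,dz-\int\big(rf'(r)-f(r)\big)p\,dz=\int f(r)\,p\,dz=\mathbb{D}_f(q\Vert p),
\]
so the supremum is attained at $D^*$ and equals $\mathbb{D}_f(q\Vert p)$. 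Conversely, if the supremum equals $\mathbb{D}_f(q\Vert p)$ and is attained at some $\bar D\in\cD$, then $\int[f(r)-\bar D r+f^*(\bar D)]\,p\,dz=0$ with a $p$-a.e.\ nonnegative integrand, so the integrand vanishes $p$-a.e., forcing $\bar D(z)=f'(r(z))$ $p$-a.e.; hence $\cD$ must contain $f'(q/p)$ and the maximizer is $D^*=f'(q/p)$.

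The argument is conceptually just Fenchel--Young, so the only genuine work is measure-theoretic bookkeeping: checking that $z\mapsto f'(q(z)/p(z))$ is measurable, restricting to $\{p>0\}$ (and, in the equality step, effectively to the support of $q$, where $r$ stays in the interior of the domain of $f$ and $f'(r)$ is finite), and allowing $\Ebb_{W\sim p}f^*(D(W))=+\infty$ in the bound (it is automatically finite at $D^*$ when $\mathbb{D}_f(q\Vert p)<\infty$). The ``only if'' direction also relies on the Fenchel--Young equality set being a singleton, which follows from differentiability of $f$; strict convexity of $f$ on the range of $r$ would additionally make the maximizer unique rather than merely determined up to a $p$-null set.
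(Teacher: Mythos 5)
Your proof is correct and follows essentially the same route as the paper's, which also rests on Fenchel conjugation ($f=f^{**}$, equivalently the Fenchel--Young inequality) applied pointwise at $x=q(z)/p(z)$ and integrated against $p$. If anything, your write-up is more complete than the paper's one-line argument, since you also verify the ``only if'' direction of the equality case by noting that a maximizer forces the nonnegative integrand $f(r)-\bar D r+f^*(\bar D)$ to vanish $p$-a.e.
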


Commonly used divergence measures,  including the KL divergence, the Jensen-Shannon (JS) divergence and the $\chi^2$-divergence, can be considered special cases of $f$-divergence. We give a proof of Lemma \ref{lem2} and the expressions of the conjugate functions and variational forms of these divergence measures in the supplementary material.
% appendix.

\subsection{Distribution matching estimation via $f$-divergence}
We now apply Lemmas \ref{distm} and \ref{lem2} to construct the objective function for estimating the conditional generator $G$.
Let $p_{X, G(\eta, X)}$ and $p_{X, Y}$ be the densities of
$(X, G(\eta, X))$ and $(X, Y)$, respectively. At the population level, we seek to find a conditional generator $G^*$ that minimizes the $f$-divergence
$\Dbb_f (p_{X, G(\eta, X)}\Vert  p_{X, Y}).$

\begin{lemma}
\label{KLM}
A function $G^*: \real^m \times \cX \to \cY$ is a minimizer of the $f$-divergence $\Dbb_{f} (p_{X, G(\eta, X)}\Vert p_{X, Y})$,
\begin{equation}
\label{KLa}
G^* \in  \argmin_{G}  \Dbb_{f} (p_{X, G(\eta, X)}\Vert p_{X, Y})
\end{equation}
if and only if $p_{X, G^*(\eta, X)}=p_{X,Y}$,  that is, $(X, G^*(\eta, X)) \sim (X,Y).$
\end{lemma}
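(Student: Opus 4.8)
The plan is to combine the nonnegativity of the $f$-divergence with the existence guarantee from the noise-outsourcing lemma, and then invoke Lemma~\ref{distm} to translate the resulting density identity into the statement about conditional distributions. First I would recall the basic property noted just after \eqref{fdiv}: since $f$ is convex with $f(1)=0$ and strictly convex at $1$, Jensen's inequality gives $\Dbb_f(q\Vert p)\ge 0$ for every pair of densities, with equality if and only if $q=p$. In particular $\Dbb_f(p_{X,G(\eta,X)}\Vert p_{X,Y})\ge 0$ for all $G$, so $0$ is a lower bound for the objective in \eqref{KLa}.

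Next I would show the lower bound is attained, so that the minimum in \eqref{KLa} is well-defined and equals $0$. By Lemma~\ref{NOLemma} (applicable since $\cY$ is a standard Borel space) there is a Borel-measurable $\tilde G:\real^m\times\cX\to\cY$ with $\eta$ independent of $X$ and $(X,Y)=(X,\tilde G(\eta,X))$ almost surely; hence $p_{X,\tilde G(\eta,X)}=p_{X,Y}$ and $\Dbb_f(p_{X,\tilde G(\eta,X)}\Vert p_{X,Y})=0$. The two directions then follow immediately. If $p_{X,G^*(\eta,X)}=p_{X,Y}$, then $\Dbb_f(p_{X,G^*(\eta,X)}\Vert p_{X,Y})=0$, which is the minimal value, so $G^*$ is a minimizer. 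Conversely, if $G^*$ attains the minimum, then $\Dbb_f(p_{X,G^*(\eta,X)}\Vert p_{X,Y})=0$, and the equality case of the $f$-divergence inequality (using strict convexity of $f$ at $1$) forces $p_{X,G^*(\eta,X)}=p_{X,Y}$, i.e.\ $(X,G^*(\eta,X))\sim(X,Y)$; Lemma~\ref{distm} then also yields $G^*(\eta,x)\sim P_{Y\mid X=x}$ for $x\in\cX$.

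Since the core argument is just chaining two earlier results, there is no real obstacle here, only a well-definedness point to be careful about: one must ensure $(X,G(\eta,X))$ and $(X,Y)$ admit densities with respect to a common dominating measure so that $\Dbb_f$ in \eqref{KLa} is meaningful, and observe that any $G$ for which absolute continuity fails (so $\Dbb_f=+\infty$) is automatically excluded from the argmin once the value $0$ is known to be attained. I expect the standard-Borel assumption on $\cY$ invoked in Lemma~\ref{NOLemma} to take care of this measure-theoretic bookkeeping, so the statement follows with no further conditions.
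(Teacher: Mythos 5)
Your proposal is correct and follows essentially the same route as the paper, which simply states that the lemma is a direct consequence of the properties of the $f$-divergence (nonnegativity and the equality case $\Dbb_f(q\Vert p)=0 \iff q=p$). Your one substantive addition—invoking Lemma~\ref{NOLemma} to show the value $0$ is actually attained, so that a minimizer must achieve zero divergence—is exactly the detail the paper leaves implicit, and it is needed for the ``only if'' direction.
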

This lemma is a direct consequence from the properties of the $f$-divergence.
%We note that $G^*$ is generally not unique. However, this %non-uniqueness
%does not cause any problem, since
%any minimizer $G^*$ leads to the same  $p_{X, G^*(\eta, %X)}=p_{X,Y}.$
Let
\begin{equation}
\label{dra}
r(z) = \frac{p_{X,G(\eta,X)}(z)}{p_{X,Y}(z)}, \ z \in \real^d \times \real^q.
\end{equation}
be the density ratio of $p_{X,G(\eta,X)}$ over  $p_{X,Y}.$
%By (\ref{KLb}), we can write
%\[
%\Dbb_f (p_{X, G(\eta,X)}\Vert p_{X,Y}) =
%\Ebb_{(X,\eta)\sim p_{X}\otimes p_{\eta}} \{r^{-1}(X, G(\eta, X)) f[r(X, G(\eta,X))]\}.
%\]
We only focus on the KL divergence below. By (\ref{KLc}), we have
\[
\Dbb_{\KL}(p_{X, G(\eta, X)}\Vert p_{X,Y}) =
\Ebb_{(X, \eta) \sim p_X p_{\eta}} [\log r(X, G(\eta, X))].
\]
Our goal is to minimize an empirical version of $\Dbb_{KL}(p_{X, G(\eta, X)}\Vert p_{X,Y})$ with respect to $G$. The minimizer will serve as an estimator of $G$.
The KL divergence depends on the unknown density functions
$p_{X, G(\eta, X)}$ and $p_{X, Y}$
only through the density ratio $r$ or the log-density ratio.
Denote the log-density ratio by  $D= \log  r.$
To estimate $G$, we will also need to estimate $D.$ We note that estimating density ratio is usually easier than estimating individual densities separately.
%As in GANs \citep{goodfellow14},
 The log-density ratio $D$ can be intuitively interpreted as a discriminator that quantifies the difference between the distributions of $(X, G(\eta, X))$ and $(X, Y)$.

Therefore, in our problem the loss function determined by the log-density ratio $D$  needs to be estimated along with the parameter of interest $G$. For this purpose, we consider the variational form of the KL divergence. The dual of $f(x)=x\log x$ is $f^*(t)= \exp(t-1)$ \citep{nguyen2010estimating}.
By Lemma \ref{lem2}, we can write the variational representation of the KL-divergence as
\begin{eqnarray}
\lefteqn{\Dbb_{\KL}(p_{X, G(\eta, X)}\Vert p_{X, Y})} \nonumber \\
&=& \sup_D \{\Ebb_{(X, \eta)\sim P_X  P_{\eta}} [D(X, G(\eta,X))] - \Ebb_{(X, Y) \sim P_{X,Y}} [\exp(D(X,Y)-1)]\} \nonumber \\
&=& \sup_D\{  \Ebb_{(X, \eta)\sim P_X P_{\eta}}[ D(X, G(\eta,X))]
- \Ebb_{(X, Y) \sim P_{X,Y}} [\exp(D(X,Y))]\} + 1, \label{KLdual}
\end{eqnarray}
where the second equality follows by change of variables from $D-1$ to  $D$ in the supremum operation. For the purpose of estimating $G$ by minimizing the KL divergence, we can ignore the constant $1$ in (\ref{KLdual}).
So we consider the criterion
\begin{eqnarray}\label{lossp}
\cL(G,D)=\Ebb_{(X, \eta)\sim P_X P_{\eta}}[ D(X, G(\eta,X))] - \Ebb_{(X, Y) \sim P_{X,Y}} [\exp(D(X,Y))].
\end{eqnarray}
The variational form is convenient since it is easy to obtain its empirical version when random samples are available. Then at the population level,  the target conditional generator $G^*$ and the target discriminator $D^*$ are characterized by the minimax problem
\begin{equation}
\label{objP}
(G^*, D^*) = \argmin_G \argmin_D \cL(G, D).
\end{equation}

Suppose that $\{(X_i, Y_i), i=1, \ldots, n\}$ are i.i.d. $P_{X, Y}$ and $\{\eta_i, i=1, \ldots, n\}$ are independently generated from  $P_{\eta}$.
%, where $J \ge 1$. Often we can simply take $J=n$.
We consider the following empirical version of $\cL(G,D)$:
\begin{equation}
\label{dualobj}
\widehat{\cL}(G, D) =
%\frac{1}{nk}\sum_{i=1}^n\sum_{j=1}^k D(X_i, G(\eta_j, X_i))
\frac{1}{n} \sum_{i=1}^n  D(X_i, G(\eta_i, X_i))
- \frac{1}{n}\sum_{i=1}^n \exp(D(X_i, Y_i)).
\end{equation}

We estimate $G$ nonparametrically using feedforward neural networks (FNN) \citep{Schmidhuber_2015} based on the objective function $\widehat{\cL}(G, D) $ in
(\ref{dualobj}).
We use two FNNs: the conditional  generator network $G_\vtheta$ with parameter $\vtheta$ for  estimating $G$ and a second network $D_\vphi$  with parameter $\vphi$ for estimating the discriminator $D$.
For any function $f(\vx): {\mathcal X} \to \real^d$,  denote $\|f\|_{\infty} = \sup_{\vx \in {\mathcal{X}}} \|f(\vx)\|$, where $\|\cdot\|$ is the Euclidean norm.

\begin{itemize}
\item The generator network $G_{\vtheta}$: let $\cG \equiv \cG_{\mathcal{H}, \mathcal{W}, \mathcal{S}, \mathcal{B}}$
be the set of ReLU  neural networks  $G_{\vtheta}: \real^m \times \real^d \rightarrow \real^{q}$ with parameter $\vtheta$,  depth $\mathcal{H}$, width $\mathcal{W}$, size $\mathcal{S}$, and $\|G_{\vtheta}\|_{\infty} \leq \mathcal{B}.$
Here the depth $\mathcal{H}$ refers to the number of hidden layers, so the network has $\cH+1$ layers in total. A $(\cH+1)$-vector $(w_0, w_1, \ldots, w_{\cH})$
specifies the width of each layer, where $w_0=d$ is the dimension of the input data and $w_{\cH}=q$ is the dimension of the output. The width $\cW=\max\{w_1, \ldots, w_{\cH}\}$ is the maximum width of the
hidden layers. The size $\cS=\sum_{i=0}^{\cH}[w_i\times (w_i+1)]$ is the total number of parameters in the network. For multilayer perceptrons with equal-width hidden layers except the output layer, we have
$\cS=\cW(m+1)+(\cW^2+\cW)(\cH-1)+\cW+q.$

\item  The discriminator network $D_{\vphi}$: Similarly, denote
	$\mathcal{D}\equiv \mathcal{D}_{\tilde{\mathcal{H}}, \tilde{\mathcal{W}}, \tilde{\mathcal{S}}, \tilde{\mathcal{B}}}$ as the set of ReLU  neural networks  $D_{\vphi}: \real^d\times \real^q\rightarrow \mathbb{R}$, with
	parameter $\vphi$, depth   $\tilde{\mathcal{H}}$,   width $\tilde{\mathcal{W}}$,  size $\tilde{\mathcal{S}}$, and $ \|D_{\vphi}\|_{\infty} \leq \tilde{\mathcal{B}}.$

\end{itemize}
Then $\vtheta$ and $\vphi$ are estimated by solving the empirical version of the
 minimax problem (\ref{objP}), that is,
\begin{equation}\label{losss}
(\hvtheta, \hvphi) = \argmin_{\vtheta}\argmax_{\vphi} \widehat{\cL}(G_{\vtheta}, D_{\vphi}).
\end{equation}
The estimated conditional generator is $\hat{G} = G_{\hvtheta}$ and the estimated discriminator is $\hat{D}=D_{\hvphi}.$  It is natural to compute $(\hvtheta, \hvphi)$ by alternately minimizing $L(\vtheta, \vphi)$ with respect to $\vtheta$ fixing $\vphi$ and maximizing $L(\vtheta, \vphi)$ with respect to $\vphi$ fixing $\vtheta$.
We provide the implementation details in Section \ref{implementation}.

\section{Weak convergence of conditional sampler}
In this section, we provide sufficient conditions under which $(X, \hG_n(\eta, X))$ converges in distribution to $(X, Y)$. This implies that for given $X=x$ with $p_X(x) > 0$, the conditional distribution of $\hG_n(\eta, x)$ given $X=x$ converges to the conditional distribution of $Y$ given $X=x$.
We focus on the case when $X$ and $Y$ are continuous-type random vectors.
We  establish a slightly stronger result by showing that the total variation norm
\[
\|p_{X, \hat{G}(\eta, X)}-p_{X, Y}\|_{L_1}
=\int_{\cX\times \cY}|p_{X, \hat{G}(\eta, X)}(x,y)-p_{X, Y}(x,y)| dxdy
%dP(x,y).
\]
converges to zero.

Let ${\cL}(G,D)$ be defined in \eqref{lossp}.
For any measurable function $G: \real^m \times \real^d \mapsto \real^q$,  define
\begin{equation}\label{lossklp}
\mathbb{L}(G) %=  \mathbb{D}(p_{X, G(\eta,X)},p_{X,Y})
=  \sup_{D} \cL(G,D).
 %\sup_D\{\Ebb_{(X, \eta)\sim P_X  P_{\eta}} D(X, G(\eta,X)) - \Ebb_{(X, Y) \sim P_{X,Y}} \exp(D(X,Y))\}
\end{equation}
For a fixed $G$, let $p_{XG}$ be the joint density of $(X, G(\eta, X))$. Lemma \ref{lem2} implies that the optimal $D$ is
$
D^*(z) = \log ({p_{XG}(z)}/{p_{XY}(z)})=\log r(z).
$
Thus the optimal discriminator is the log-likelihood ratio serving as a critic of the resemblance between $p_{XY}$ and $p_{XG}$. Substituting this expression into (\ref{lossklp}) yields
$
\Lbb(G) = \Ebb_{(X, \eta) \sim P_XP_{\eta}} [\log r(X, G(\eta, X))].
$
Let
$
G^* \in  \argmin_{G} \Lbb(G).
$
%Therefore, the optimal $G^*$ minimizing the KL-divergence %satisfies
We have $P_{(X, G^*(X, \eta))} = P_{X, Y}$ by Lemmas \ref{lem2} and \ref{KLM}.

We assume the following conditions.

\begin{itemize}
\item[(A1)]
%Assume that $p_{X,Y}$ is supported on $[-B,B]^{d+q}$ and %$p_{\eta}$ is supported on $[-B,B]^{m}$ for some $B>0$. The %generator
% $G^*: [-B,B]^{m} \times [-B,B]^{d}  \to [-B,B]^q$ is %Lipschitz continuous with the Lipschitz constant $L_1$.

The target conditional generator $G^*: \real^{m} \times \cX  \to \cY$ is continuous with $\|G^*\|_{\infty} \le C_0$ for some constant $0 < C_0 < \infty.$

%Lipschitz continuous with the Lipschitz constant $L_1$.

\item[(A2)]
%For any %finite number $G \in \cG$,
  %_{\mathcal{D}, \mathcal{W}, \mathcal{S}, \mathcal{B}}$,
%we assume   $r_G(z) = {p_{X,G(\eta,X)}(z)}/{p_{X,Y}(z)}$ is  %Lipschitz continuous on $[-B,B]^{d+q}$  with Lipschitz %constant $L_2$, and $0<c_1\leq r(z)\leq c_2$ for some %constants $0 < c_1 \le c_2 < \infty.$

For any  $G \in \cG\equiv \cG_{\mathcal{D}, \mathcal{W}, \mathcal{S}, \mathcal{B}}$,
$r_G(z) = {p_{X,G(\eta,X)}(z)}/{p_{X,Y}(z)}: \cX\times \cY \to \real$ is continuous and
$0<C_1\leq r_G(z)\leq C_2$ for some constants $0 < C_1 \le C_2 < \infty.$

%Lipschitz continuous with Lipschitz constant $L_2$, and %$0<c_1\leq r(z)\leq c_2$ for some constants $0 < c_1 \le c_2 < %\infty.$
\end{itemize}

We also make the following assumptions on the network parameters
of the conditional generator $G_{\vtheta}$  and the discriminator  $D_{\vphi}$.

\begin{itemize}
\item[(N1)] The network parameters of $\cG$ satisfies
%the specifications:
	\begin{equation*}
		\mathcal{H}\mathcal{W}\to\infty \quad {\rm and} \quad \frac{\mathcal{B}\mathcal{S}\cH\log(\mathcal{S})\log n}{n}\to 0,
\text{ as } n \to \infty.
	\end{equation*}
%	as $n\to\infty$.

\item[(N2)]
The network parameters of $\cD$ satisfies
% the specifications:
	\begin{equation*}
		\tilde{\mathcal{H}}\tilde{\mathcal{W}}\to\infty \quad {\rm and} \quad \frac{\tilde{\mathcal{B}}\tilde{\cS}\tilde{\cH}
\log(\tilde{\cS})\log n}{n}\to 0, \text{ as } n \to \infty.
	\end{equation*}
%	as $n\to\infty$.
\end{itemize}

\begin{comment}
\blue{\red{[Changes needed here]}
Let   $c_3 = \max\{|\log(c_1)|,|\log(c_2)|\}+1$,  $c_4 = \max_{|s|\leq  4BL_2\sqrt{d+1} + c_3}  \exp(s)$.
We set the network parameters of   $G_{\vtheta}$  and the discriminator  $D_{\vphi}$ as follows.
\begin{itemize}
\item[(N1)]
The network parameters of $\cG$ satisfies the specifications:
  depth $\mathcal{H} = 9 \log n + 12,$ width $\mathcal{W}=\max\{(n^{\frac{d+m}{d+m+2}}/\log n)^{\frac{1}{d+m}}+4d,12n^{\frac{d+m}{2+d+m}}/\log n+14\},$  $\mathcal{B} = 4BL_1\sqrt{d+m}+B$, size $\mathcal{S} =n^{\frac{d+m-2}{d+m+2}}/\log^4 n,$
\item[(N2)] The network parameters of $\cD$ satisfies the specifications: depth
    $\tilde{\mathcal{H}} = 9 \log n + 12,$ width
      $\tilde{\mathcal{W}}=\max\{(n^{\frac{d+q}{d+q+2}}/\log n)^{\frac{1}{d+q}}+4d,12n^{\frac{d+q}{d+q+2}}/\log n+14\},$  $\tilde{\mathcal{B}} = 4BL_2\sqrt{d+q}+c_3$, and size
 $\tilde{\mathcal{S}} =n^{\frac{d+q-2}{d+q+2}}/\log^4 n.$
 %$\tilde{\mathcal{W}}=\max\{(n^{\frac{d+1}{d+3}}/\log %n)^{\frac{1}{d+1}}+4d,12n^{\frac{d+1}{d+3}}/\log %n+14\},$  $\tilde{\mathcal{B}} = 4BL_2\sqrt{d+1}+c_3$, %and size
 %$\tilde{\mathcal{S}} =n^{\frac{d-1}{d+3}}/\log^4 n.$
\end{itemize}
}
\end{comment}

\begin{theorem}
\label{consistency}
Suppose that the assumptions (A1) and (A2) hold. If the network parameters of $\cG$ and $\cD$ satisfies the specifications (N1) and (N2), then
\begin{equation}
\label{converg}
\Ebb_{(X_i,Y_i,\eta_i,)_{i=1}^n}  \|p_{X, \hat{G}_{\vtheta}(\eta, X)}-p_{X, Y}\|_{L_1}^2 \rightarrow 0,
\ \text{ as } n \to \infty.
\end{equation}

% (\ref{converg}) holds.
 \end{theorem}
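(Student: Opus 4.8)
The plan is to reduce the $L_1$ claim to a bound on the excess risk of $\hG$ for the population objective $\Lbb$, and then prove the latter by a standard oracle decomposition into an approximation error for the generator, an approximation error for the critic, and a uniform statistical error. Since the variational bound of Lemma~\ref{lem2} is tight (attained at $D^*_G=\log r_G$, which is bounded whenever $r_G$ is bounded away from $0$ and $\infty$), one has $\Lbb(G)-\Lbb(G^*)=\Dbb_{\KL}(p_{X,G(\eta,X)}\Vert p_{X,Y})$ for every $G$, where $G^*\in\argmin_G\Lbb(G)$ satisfies $p_{X,G^*(\eta,X)}=p_{X,Y}$ by Lemmas~\ref{lem2} and \ref{KLM}. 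Pinsker's inequality then yields $\|p_{X,\hG(\eta,X)}-p_{X,Y}\|_{L_1}^2\le 2\,\Dbb_{\KL}(p_{X,\hG(\eta,X)}\Vert p_{X,Y})=2\,[\Lbb(\hG)-\Lbb(G^*)]$, so it suffices to show $\Ebb\,[\Lbb(\hG)-\Lbb(G^*)]\to 0$.

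For the decomposition, fix any $\bar G\in\cG$ and, for the random $\hG=G_{\hvtheta}$, let $D_{\hG}\in\cD$ be a network approximating the optimal critic $D^*_{\hG}=\log r_{\hG}$ in sup-norm ($\|D^*_{\hG}\|_\infty\le c_3:=\max\{|\log C_1|,|\log C_2|\}$ by (A2)). Using $\Lbb(\hG)=\cL(\hG,D^*_{\hG})$, that $\hvphi$ maximizes $\widehat\cL(\hG,\cdot)$ over $\cD$ while $\hvtheta$ minimizes $\sup_{\vphi}\widehat\cL(G_{\vtheta},D_{\vphi})$, and the add-and-subtract trick with the empirical objective, one obtains
\[
\Lbb(\hG)-\Lbb(G^*)\ \le\ \underbrace{\cL(\hG,D^*_{\hG})-\cL(\hG,D_{\hG})}_{\mathcal E^{D}_{\mathrm{app}}}\ +\ \underbrace{\inf_{G\in\cG}\big[\Lbb(G)-\Lbb(G^*)\big]}_{\mathcal E^{G}_{\mathrm{app}}}\ +\ 2\underbrace{\sup_{G\in\cG,\,D\in\cD}\big|\cL(G,D)-\widehat\cL(G,D)\big|}_{\mathcal E_{\mathrm{stat}}}\,,
\]
valid for every $\bar G\in\cG$ (so one may pass to the infimum over $\cG$ in the middle term); here the generator term arises because $\widehat\cL(\hG,\hD)=\inf_{\cG}\sup_{\cD}\widehat\cL\le\sup_{\cD}\widehat\cL(\bar G,\cdot)\le\sup_D\cL(\bar G,D)+\mathcal E_{\mathrm{stat}}=\Lbb(\bar G)+\mathcal E_{\mathrm{stat}}$.

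It then remains to drive each term to $0$ (in expectation for $\mathcal E_{\mathrm{stat}}$). For $\mathcal E^{G}_{\mathrm{app}}=\inf_{G\in\cG}\Dbb_{\KL}(p_{X,G(\eta,X)}\Vert p_{X,Y})$: the universal approximation theorem for ReLU networks and (N1) ($\cH\cW\to\infty$) supply $\bar G\in\cG$ with $\|\bar G-G^*\|_\infty\to 0$; combined with (A2) (which makes $u\mapsto u\log u$ Lipschitz on $[C_1,C_2]$) and a Scheff\'e-type argument turning uniform closeness of the generators into $L_1$-closeness of the induced joint densities, this gives $\mathcal E^{G}_{\mathrm{app}}\to 0$. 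For $\mathcal E^{D}_{\mathrm{app}}$: since $\|D^*_{\hG}\|_\infty\le c_3$ and the relevant exponentials are bounded, $\mathcal E^{D}_{\mathrm{app}}\lesssim\|D^*_{\hG}-D_{\hG}\|_\infty$, which $\to 0$ because $D^*_{\hG}$ is continuous and bounded and (N2) gives $\tilde{\cH}\tilde{\cW}\to\infty$. For $\Ebb\,\mathcal E_{\mathrm{stat}}$: write $\cL-\widehat\cL$ as the difference of two centred empirical processes over the uniformly bounded ReLU-indexed classes $\{(x,\eta)\mapsto D(x,G(\eta,x)):G\in\cG,\,D\in\cD\}$ and $\{(x,y)\mapsto e^{D(x,y)}:D\in\cD\}$, and apply symmetrization with covering-number bounds for ReLU networks and Dudley's entropy integral to get $\Ebb\,\mathcal E_{\mathrm{stat}}=O\!\big(\sqrt{\mathcal B\mathcal S\cH\log(\mathcal S)\log n/n}+\sqrt{\tilde{\mathcal B}\tilde{\mathcal S}\tilde{\cH}\log(\tilde{\mathcal S})\log n/n}\big)$, which vanishes under (N1)--(N2). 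Assembling the three bounds gives $\Ebb\,[\Lbb(\hG)-\Lbb(G^*)]\to 0$, and Pinsker's inequality completes the proof.

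The hard part will not be the statistical term, which is routine for bounded ReLU classes once the metric entropy is controlled by the network size, but the two approximation terms. For $\mathcal E^{G}_{\mathrm{app}}$, passing from $\|\bar G-G^*\|_\infty\to 0$ to $\|p_{X,\bar G(\eta,X)}-p_{X,Y}\|_{L_1}\to 0$ is not automatic: it is exactly here that (A2) — boundedness of the density ratio, which rules out degeneracy of the conditional law — must be used, together with a quantitative handle on how the conditional density of $\bar G(\eta,x)$ moves with $\bar G$. For $\mathcal E^{D}_{\mathrm{app}}$, the critic approximation must hold uniformly over the random $\hG\in\cG$, which forces the family $\{\log r_G:G\in\cG\}$ to be equicontinuous with a common modulus of continuity; establishing this equicontinuity (equivalently, a quantitative strengthening of the bare continuity asserted in (A2)) is, I expect, the crux of the argument.
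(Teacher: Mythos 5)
Your proposal is correct and follows essentially the same route as the paper: Pinsker's inequality reduces the $L_1$ claim to the excess risk $\Lbb(\hG)-\Lbb(G^*)$, which is then split into the same three terms (generator approximation error, discriminator approximation error, and twice the uniform statistical error $\sup_{G\in\cG,D\in\cD}|\cL-\widehat\cL|$), each controlled with the same tools (ReLU approximation under (N1)--(N2), symmetrization plus covering-number/pseudo-dimension bounds). The two subtleties you flag at the end --- passing from sup-norm closeness of generators to $L_1$-closeness of the induced joint densities, and the need for a uniform modulus of continuity over $\{\log r_G: G\in\cG\}$ so the critic approximation holds for the random $\hG$ --- are genuine, and the paper's own proof treats both only by appeal to ``continuity.''
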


A direct corollary of Theorem \ref{consistency} is the following convergence result in terms of the conditional density functions.
\begin{corollary}
\label{consistencyb}
Suppose that the assumptions (A1) and (A2) hold and the network parameters of $\cG$ and $\cD$ satisfies the specifications (N1) and (N2). Then
\begin{equation*}
\label{convergb}
%\Ebb_{(X_i,Y_i,\eta_i,)_{i=1}^n}
 \Ebb_{X\sim P_X} \left[\int_{\cY} \left|p_{\hat{G}_{\vtheta}(\eta, X)}(y| X)-p_{Y|X}(y| X)\right| dy\right] \rightarrow_P 0,  \text{ as } n \to \infty.
\end{equation*}
\end{corollary}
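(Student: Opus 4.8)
The plan is to obtain the corollary as a direct reformulation of Theorem \ref{consistency}: first pass from convergence of the squared joint $L_1$ error in mean to convergence in probability, then rewrite the joint $L_1$ discrepancy as the stated conditional discrepancy via a factorization of the two joint densities.

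First I would note that the $X$-marginal of $(X,\hat G_{\vtheta}(\eta,X))$ is exactly $P_X$, since the generator leaves the first coordinate untouched and $\eta$ is independent of $X$. Hence both joint densities factor through the same marginal:
\[
p_{X,\hat G_{\vtheta}(\eta,X)}(x,y) = p_X(x)\, p_{\hat G_{\vtheta}(\eta,X)}(y\mid x), \qquad
p_{X,Y}(x,y) = p_X(x)\, p_{Y\mid X}(y\mid x),
\]
where assumption (A2) (boundedness of the density ratio, hence absolute continuity) guarantees that the conditional density $p_{\hat G_{\vtheta}(\eta,X)}(\cdot\mid x)$ is well defined. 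Pulling out $p_X(x)\ge 0$ and applying Tonelli's theorem, the nonnegativity of the integrand gives
\[
\|p_{X,\hat G_{\vtheta}(\eta,X)} - p_{X,Y}\|_{L_1}
= \Ebb_{X\sim P_X}\!\left[\int_{\cY}\bigl|p_{\hat G_{\vtheta}(\eta,X)}(y\mid X) - p_{Y\mid X}(y\mid X)\bigr|\,dy\right],
\]
so the quantity in the corollary is identically the joint $L_1$ discrepancy of Theorem \ref{consistency}, viewed as a random variable in the training sample $(X_i,Y_i,\eta_i)_{i=1}^n$.

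Next I would use Jensen's inequality (equivalently Cauchy--Schwarz) to bound
$\Ebb\,\|p_{X,\hat G_{\vtheta}(\eta,X)} - p_{X,Y}\|_{L_1}
\le \bigl(\Ebb\,\|p_{X,\hat G_{\vtheta}(\eta,X)} - p_{X,Y}\|_{L_1}^2\bigr)^{1/2}$,
which tends to $0$ by Theorem \ref{consistency}. Markov's inequality then yields, for every $\varepsilon>0$,
$\Pr\bigl(\|p_{X,\hat G_{\vtheta}(\eta,X)} - p_{X,Y}\|_{L_1} > \varepsilon\bigr)
\le \varepsilon^{-1}\,\Ebb\,\|p_{X,\hat G_{\vtheta}(\eta,X)} - p_{X,Y}\|_{L_1} \to 0$,
i.e.\ convergence in probability; combined with the factorization identity above, this is exactly the assertion of the corollary.

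There is essentially no obstacle here, since the corollary is a restatement of Theorem \ref{consistency}. The only points needing a little care are that the conditional densities are legitimately defined (precisely what the bounded density ratio in (A2) provides) and that the interchange of the $X$-expectation with the inner $\cY$-integral is valid (immediate from Tonelli because the integrand is nonnegative); everything else is the standard implications ``$L_2$ in mean $\Rightarrow$ $L_1$ in mean $\Rightarrow$ in probability.''
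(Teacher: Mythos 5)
Your proposal is correct and follows essentially the same route as the paper: factor both joint densities through the common marginal $p_X$ to identify the conditional $L_1$ discrepancy with the joint $L_1$ norm, then pass from Theorem \ref{consistency} to convergence in probability via Jensen and Markov. Your write-up is in fact slightly more careful than the paper's, which leaves the Jensen step (from $\Ebb\|\cdot\|_{L_1}^2\to 0$ to $\Ebb\|\cdot\|_{L_1}\to 0$) implicit.
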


Theorem \ref{consistency} and Corollary \ref{consistencyb} provide strong theoretical support for the proposed method under mild conditions. They are proved using the empirical process method \citep{bartlett2002rademacher, bartlett2019} and the recent results on approximating smooth functions by deep neural networks \citep{shen2019deep}.
The main challenge of the proof is that the objective function is a minimax process indexed by two classes of neural networks.
Details are given in the supplementary material.

%Therefore, controlling the approximation error and the %stochastic error requires careful analysis.
Conditions (A1) and (A2) are mild regularity conditions that are often assumed in nonparametric estimation problems.
Conditions (N1) and (N2)
%on the network parameters for the generator $G_{\vtheta}$  and the discriminator  $D_{\vphi}$ in (N1) and (N2)
concern the depths, widths and sizes of the generator and the discriminator networks. For
the generator
These conditions require that the size of the network increases with the sample size,  the product of the depth and the width increases with the sample size. We note that the conditions are flexible with respect to the network architecture. In particular, they allow either the depth or the width remain fixed. For example, we can have a deep network with fixed width or a wide network with fixed depth. A restriction of the conditions is that they require the network size to be smaller than the sample size. This restriction stems from the use of empirical process theory \citep{vw1996, bartlett2019, bartlett2002rademacher} to control the stochastic error of the estimated generator and discriminator.
%of the networks increases with the sample size with certain  rate.
%The depth of the networks $G_{\vtheta}$ and $D_{\vphi}$ is %required to depend on the sample size $n$ and are of order %$\log(n)$, but is independent of other parameters.
%and the dimension $d$ of $X$, $m$ of $Y$ and  $d^*$ of $R^*$ are %independent.
%However, the width and size of the networks depend on the sample %size $n$ as well as $\{d, m, q\}$ in a much more complicated %way.

In nonparametric regression, there has been much recent work on convergence analysis of nonparametric estimators using deep neural networks.
Two types of assumptions on the underlying model have been used in the analysis.
The first type of assumptions postulates that
the regression function has a compositional structure so that the intrinsic dimension of the function is lower than the ambient dimension  \citep{bauer2019deep, kohler2020rate, schmidt2020nonparametric, shen2021quantile}.
%Well known examples include
%the assumption of single index model, generalized additive model, and projection pursuit, among %others. Recently, \citet{bauer2019deep},  \cite{kohler2020rate}, %\cite{schmidt2020nonparametric}
%and \citet{shen2021quantile} considered general compositional models.
The second type assumes that
the distribution of $X$ is supported on a lower-dimensional manifold
%. In nonparametric regression using neural networks
%this assumption has been used in several recent works, e.g.,
\citep{chen2019nonparametric, nakada2019adaptive, jiao2021deep, shen2021robust}.
These works also require the size of the neural network used in the nonparametric regression to be smaller than the sample size to ensure the consistency of the estimators.

In the current problem of sampling from conditional distributions,
convergence analysis is substantially more difficult than that in nonparametric regression. The main reason is that in the current setting, optimization is a minimax problem that leads to an \textit{estimated nonparametric loss function}, i.e.,  there is a second neural network involved for estimating the discriminator in the dual form of KL divergence, in addition to the neural network for estimating the conditional generator. In comparison,
%optimization in
nonparametric regression with a given loss function such as least squares loss
%loss or other loss functions such as robust losses and quantile losses \citep{shen2021robust, %shen2021quantile} has a known loss function and
only has a single neural network for estimating the regression function.

%\begin{comment}
We now give a high-level description of the proof for Theorem \ref{consistency}, the details are provided in the supplementary material.
Lemma \ref{KLM} implies that $\mathbb{L}(G^*) = 0.$
For notational simplicity, write $\hat{G}=\hat{G}_{\vtheta}.$
By Pinsker's inequality \citep{npe2008}, we have
\begin{equation}\label{errl1}
 \|p_{X, \hat{G}(\eta, X)}-p_{X, Y})\|_{L^1}^2 \leq 2 (\mathbb{L}(\hat{G}) - \mathbb{L}(G^*)).
 \end{equation}
So it suffices to show that the right side in \eqref{errl1}
converges to zero in expectation.
By the definition of $\Lbb(G)$, we can write the excess risk as
\[
\mathbb{L}(\hat{G})-\mathbb{L}(G^{*})
=\sup_{D} \cL(\hat{G},D)-\sup_{D} \cL(G^*,D).
\]
%A key step in the proof is the following decomposition of the excess risk,
%\begin{comment}
A key step in the proof is the following decomposition of the excess risk.
For any	$\bar{G}\in \cG$, we decompose the right side in \eqref{errl1} as:
\begin{eqnarray}
\mathbb{L}(\hat{G})-\mathbb{L}(G^{*}) %\nonumber\\
	%& =\mathbb{L}(\hat{G}_{\vtheta}) - \hat{\mathbb{L}}(\hat{G}_{\vtheta}) +    \hat{\mathbb{L}}(\hat{G}_{\vtheta})-  \hat{\mathbb{L}}(\bar{G}) +   \hat{\mathbb{L}}(\bar{G}) - \mathbb{L}(\bar{G}) +\mathbb{L}(\bar{G})  - \mathbb{L}(G^*)\nonumber\\
%	&\leq  2 \sup_{G \in \cG_{\mathcal{H}, \mathcal{W}, \mathcal{S}, \mathcal{B}}} |\mathbb{L}(G) - \hat{\mathbb{L}}(G) |+ \inf_{\bar{G} \in \cG_{\mathcal{H}, \mathcal{W}, \mathcal{S}, \mathcal{B}}} |\mathbb{L}(\bar{G})  - \mathbb{L}(G^*)| , \label{eppl}
& = &\sup_{D} \cL(\hat{G},D)  -  \sup_{D \in \cD} \cL(\hat{G},D) \nonumber\\
& &+   \sup_{D \in \cD} \cL(\hat{G},D) - \sup_{D \in \cD} \widehat{\cL}(\hat{G},D) \label{ep1}\\
& &+ \sup_{D \in \cD} \widehat{\cL}(\hat{G},D) - \sup_{D \in \cD} \widehat{\cL}(\bar{G},D) \label{neg1}\\
& &+ \sup_{D \in \cD} \widehat{\cL}(\bar{G},D) - \sup_{D \in \cD} \cL(\bar{G},D) \label{ep2}\\
& &+\sup_{D \in \cD} \cL(\bar{G},D) - \sup_{D} \cL(\bar{G},D)\label{neg2}\\
& &+ \sup_{D} \cL(\bar{G},D)-\sup_{D} \cL(G^*,D).\nonumber
%\\
%&\leq & \sup_{D} \cL(\hat{G},D)  -  \sup_{D \in \cD} %\cL(\hat{G},D) + 2 \sup_{D\in \cD, G\in %\cG}|\cL(G,D)-\widehat{\cL}(G,D)|\nonumber\\
%& &+ \sup_{D} \cL(\bar{G},D)-\sup_{D} \cL(G^*,D)\nonumber
\end{eqnarray}
%	where the  inequality follows from  the fact that both
Since the terms in \eqref{neg1} and \eqref{neg2} are nonpositive,
%(use the fact  $\hat{G}$ is a minimizer   and the feasibility of $\bar{G}$),
and the terms in \eqref{ep1} and  \eqref{ep2}  are smaller than
$\sup_{D\in \cD, G\in \cG}|\cL(G,D)-\widehat{\cL}(G,D)|$,
we have
\begin{eqnarray*}
\mathbb{L}(\hat{G})-\mathbb{L}(G^{*})
&\leq & \sup_{D} \cL(\hat{G},D)  -  \sup_{D \in \cD} \cL(\hat{G},D) + 2 \sup_{D\in \cD, G\in \cG}|\cL(G,D)-\widehat{\cL}(G,D)|\nonumber\\
& &+ \sup_{D} \cL(\bar{G},D)-\sup_{D} \cL(G^*,D).\nonumber
\end{eqnarray*}
Note that $\bar{G}$ is arbitrary. By taking infimum with respect to  $\bar{G}$ over $\cG$ on both sides of the above display,  we obtain
%\end{comment}
\begin{align}
\label{errdec}
\mathbb{L}(\hat{G})-\mathbb{L}(G^{*}) \le
\Delta_1 + \Delta_2 + \Delta_3,
\end{align}
where
\begin{eqnarray*}
\Delta_1&=&\sup_{D} \cL(\hat{G},D)  -  \sup_{D \in \cD} \cL(\hat{G},D),\\
\Delta_2&= &2 \sup_{G \in \cG, D\in \cD}|\cL(G,D)-\hat{\cL}(G,D)|,\\
\Delta_3&=&\inf_{\bar{G}\in \cG} [\mathbb{L}(\bar{G})-\mathbb{L}(G^*)].
\end{eqnarray*}
The first and the third terms $\Delta_1$ and $\Delta_3$
are the approximation errors and
the second term $\Delta_2$ is the statistical error.
In the supplementary material, we derive (\ref{errdec}) and show that these error terms converge to zero.
%\end{comment}

\section{Implementation}
\label{implementation}

We describe the implementation of GCDS. For training the generator $G_{\vtheta}$ and the discriminator $D_{\vphi}$, we use the rectified linear unit (ReLU) as the activation function in $G_{\vtheta}$ and $D_{\vphi}.$
%The activation functions of hidden layers for the conditional %generator network $G_{\vtheta}$  and the discriminator network
%$D_{\vphi}$ are rectified linear unit(ReLU).
%
%The generator will produce several outputs for training purpose. %The first and main output is the estimated conditional sample %$\hat G_\vtheta(X,\eta)$.
%
%The second output is the estimated conditional expectation of Y %given X, i.e. $\hat g_\vtheta(X) = \frac{\sum_{j=1}^n\hat %G_\vtheta(X,\eta_j)}{n}$.
%Following \citet{goodfellow14},
We train the discriminator and the generator iteratively by updating $\vtheta$ and $\vphi$ alternately as follows:
\begin{itemize}
 \setlength\itemsep{-0.2 cm}
	\item[(a)] Fix  $\vtheta$,  update the discriminator by ascending the stochastic gradient of the loss $\eqref{dualobj}$ with respect to $\vphi$.

	\item[(b)] Fix  $\vphi$,  update the generator by descending the stochastic gradient of the loss $\eqref{dualobj}$ with respect to $\vtheta$.
\end{itemize}

The  training process is described below.
%in Algorithm 1 below:

\bigskip\noindent
\textbf{Algorithm}: Training GCDS

\noindent
\textbf{Input:}
(a) Pairs $\{(X_i,Y_i), \ i = 1,\dots,n\};$
(b)  Samples $\{\eta\}_{i=1}^n$ from $P_{\eta}$

\noindent
\textbf{Output}: Conditional generator
$G_{\hvtheta}$ and discriminator $D_{\hvtheta}$

\noindent
\textbf{While not converged do}
\begin{itemize}
 \setlength\itemsep{-0.1 cm}
\item  Compute
$\tilde{Y}_i = G_\vtheta(\eta_i, X_i),\ i = 1,2,\dots,n.$
Let $S_1= \{(X_i, Z_i, V_i) = (X_i, Y_i,1), \ i=1,\dots,n\}$ and $S_2 = \{(X_i, Z_i,V_i) = (X_{i-n}, \tilde{Y}_i,-1), \ i = n+1,\dots,2n\}.$
\item Randomly select ${B}/{2}$ samples from $S_1$ and another ${B}/{2}$ samples from $S_2$.
Denote the subscripts of the selected samples by $\{b_i:i=1,\dots,B\}.$
\item
Update $D_\vphi$ by ascending its stochastic gradient:
%\begin{eqnarray*}
\[
\nabla_\vphi \Big\{ \frac{1}{B}\sum_{i = 1}^B [D_\vphi(X_{b_i}, Z_{b_i}) \mathbbm{1}_{\{V_{b_i} = -1\}}
- \exp(D_\vphi(X_{b_i}, Z_{b_i}))\mathbbm{1}_{\{V_{b_i} = 1\}}]\Big\}.
\]
%\end{eqnarray*}

\item Randomly select B samples from $\{(X_i,Y_i), \ i = 1,\dots,n\}.$  Denote the subscripts of the selected samples by $\{b_i:i=1,\dots,B\}$\;

\item Update $G_\vtheta$ by descending its stochastic gradient:
		$$\nabla_\vtheta \left\{ \frac{1}{B}\sum_{i=1}^B D_\vphi(X_{b_i}, G_\vtheta(\eta_{b_i}, X_{b_i})) \right\}.
		$$
\end{itemize}
\textbf{End while}

We implement the GCDS algorithm in
TensorFlow \citep{abadi2016tensorflow}.

\section{Numerical experiments}
\label{experiments}
In this section, we carry out numerical experiments to assess the performance
of GCDS. We use both simulated and real datasets in the experiments.
In addition to the results reported in this section, additional numerical results are given in the online supplementary material, including results from experiments evaluating  how the performance of GCDS depends on the network architecture,
the dimension $m$ of the noise vector $\eta$ and the sample size $n.$

\subsection{Simulation studies}
We conduct simulation studies to evaluate the finite sample performance of GCDS. We also compare it with several existing conditional density estimation methods, including the
nearest neighbor kernel conditional density estimation (NNKCDE,
\cite{dalmasso2020conditional}), the conditional kernel density estimation (CKDE, implemented in the \texttt{R} package \texttt{np}, \cite{hall2004cross}), and the basis expansion method FlexCode \citep{izbicki2017converting}). We implement GCDS in
TensorFlow \citep{abadi2016tensorflow} and use the stochastic gradient descent algorithm  Adam \citep{kingma2015adam} in training the neural networks.
%
%Additional numerical results are provided in the supplementary material.
%
%\subsubsection{Conditional mean and standard deviation}
%In this section,  We compare our methods with other conditional %density estimation methods. Consider dataset consists of pairs %$\{(X_1,Y_1),\dots,(X_n,Y_n)\}$ where $X_i\in \R^p$ and $Y_i\in %\R^q$. In this section, we consider the case $q=1$ just for the %simplicity of comparison with other methods.
%In the next section, we will demonstrate the results of case %where $q=2$. Notice that the conditional distribution of $Y$ %given $X$ depends on the error term $\varepsilon$ which is not %observed. We will consider a few models satisfying different %assumptions here.
We use the conditional distributions based on the following models in the simulation studies.

\begin{enumerate}
 \setlength\itemsep{0.0 cm}
\item  (M1). A nonlinear model with an additive error term:\\
$Y =  X_1^2 + \exp(X_2 + X_3/3) + \sin(X_4 + X_5) + \varepsilon,\  \varepsilon\sim N(0,1).$
%This is a nonlinear model with an additive error term.

\item  (M2). A model with  an additive error term whose variance depends on the predictors:
$Y =
 X_1^2 + \exp((X_2 +X_3/3)) + X_4 - X_5 + (0.5+ X_2^2/2 +X_5^2/2)\times \varepsilon,\   \varepsilon\sim N(0,1).$

\item  (M3). A model with a multiplicative non-Gassisan error term:\\
$Y =
(5 +X_1^2/3 + X_2^2 + X_3^2+X_4+X_5) *\exp(0.5\times\varepsilon),$  where\\
$\varepsilon\sim \mathbb{I}_{\{U<0.5\}}\times N(-2,1)+\mathbb{I}_{\{U>0.5\}}\times N(2,1)\  \text{with}\ U\sim \text{Uniform}(0,1), X \in \real^{30}.$
%with $p = 30$.

%The dimensionalty of independent variables is relatively high %which makes it hard to capture the true relationship.

\item  (M4).  A mixture of two normal distributions:\\
%is an example of mixture Gaussian model. $p=5$ in this model. %Many methods fail to capture the relationship especially %kernel based methods.
$ Y =
\mathbb{I}_{\{U<0.5\}}N(-X_1, 0.25^2) + \mathbb{I}_{\{U>0.5\}}N(X_1, 0.25^2),$ where
$ U \sim \text{Uniform}(0,1).$

\end{enumerate}
In each of  the models above, the covariate vector  $X$ is generated from standard multivariate normal distribution.
%We use a sample of size 5000 for training.

%In models (M1) to (M3), the conditional generator $G$ is parameterized by a
%{one-layer} neural network; in model (M4),  it is parameterized by a two-layer fully connected %neural network. The log-density ratio function $D$ is parameterized by a
%{two-layer} fully connected neural network.

\begin{comment}
We use fully connected neural networks for the conditional generator $G$ and the discriminator $D$. The specifications of these networks are given in Table \ref{netspec}. The results reported below are based on the \textit{Base} models. Additional simulation results using \textit{Half}, \textit{Base} and \textit{Double} specifications in Table \ref{netspec} are provided in the supplementary material on the effects of the neural network structures.
\end{comment}

The neural networks used in the simulations are specified as follows. For models (M1)-(M3),
the generator network has 1 hidden layer with width 50, and the discriminator has 2 hidden layers with widths $(50, 25);$ for models (M4), the generator network has 2 hidden layers with widths (40, 15), and the discriminator has 2 hidden layers with widths $(50, 25).$
For the values of $m$, the dimension of the noise random vector $\eta$,  we set $m=3$ for models (M1)-(M3), and $m=4$ for model (M4).

\begin{comment}
\begin{table}[H]
	\centering
\label{netspec}
{\scriptsize
		\begin{tabular}{c|ll|ll|ll}\hline
& \multicolumn{2}{c|}{\textit{\normalsize Half}}   & \multicolumn{2}{c|}{\textit{\normalsize Base}}  & \multicolumn{2}{c}{\textit{\normalsize Double}} \\ \hline
 Model   & $G$         & $D$
              &$ G$         & $D$
              & $G$         & $D$          \\  \hline
 (M1)-(M3)  & $L=1$ & $L=2$
           & $L=1$ & $L=2$
           & $L=2$ & $L=3$ \\
           & $ W=(25)$ & $ W=\{25, 13\}$
           & $W=(50)$ & $W=(50, 25) $
           & $W=(100, 50)$ & $ W=(100, 50, 25)$ \\ \hline
 (M4)  & $L=2$ & $L=2$
           & $L=2$ & $L=2$
           & $L=3$ & $L=3$ \\
           & $ W=(20,8)$ & $ W=\{25, 13\}$
           & $W=(40,15)$ & $W=(50, 25) $
           & $W=(80,30,30)$ & $ W=(100, 50, 25)$ \\ \hline
\end{tabular}
\caption{Simulated data from models (M1)-(M4): network parameter specifications. $L$: the number of hidden layers, $W$: the widths of the layers.}
}
\end{table}
\end{comment}

For the conditional density estimation method NNKCDE, the tuning parameters
are chosen using cross-validation. The bandwidth of the conditional kernel density estimator CKDE is chosen by the rule-of-thumb using the standard formula $h_j = 1.06\sigma_j n^{-1/(2*K+J)}$ where $\sigma_j$ is a
measure of spread of the $j$th continuous variable defined as $\min(SD, IQR/1.349)$, n the number of observations, $K$ the order of the kernel, and $J$ the number of continuous variables.
The basis expansion based method FlexCode uses %the default
Fourier basis. The maximum number of bases is 40 and the actual number of bases is selected using cross-validation.

We calculate the mean squared error (MSE) of the estimated conditional mean $E(Y|X)$ and the estimated conditional standard deviation $\text{SD}(Y|X)$.
We use a test data set $\{x_1,\dots, x_{k}\}$ of size $k=2000$ . The MSE of the estimated conditional mean is
$
\text{MSE(mean)}= (1/k)\sum_{i=1}^{k}[\hat E(Y|X=x_i) - E(Y|X=x_i)]^2.
$
For GCDS, the estimate of $E(Y|X=x)$ is based on (\ref{mont}) using Monte Carlo. For other methods, the estimate is calculated by numerical integration $\hat E(Y|x) = \int yf(y|x)dy$ using 1000 subdivisions.
Similarly, the MSE of the estimated conditional standard deviation is
$ \text{MSE(sd)} = (1/k)\sum_{i=1}^{k}[\hat{\text{SD}}(Y|X=x_i) -\text{SD}(Y|X=x_i)]^2.
$

\begin{table}[H]
	\centering
	\begin{tabular}{cC{1.5cm}C{2.7cm}C{2.7cm}C{2.7cm}C{2.7cm}} \hline
%		\topline
%		\headcol
& &  GCDS & NNKCDE & CKDE & FlexCode \\ \hline
%	\midline
		\multirow{2}{*}{M1}&Mean &  \textbf{0.259}(0.015)  & 1.367(0.010) & 0.491(0.024) & 0.610(0.008) \\
		&SD &  \textbf{0.022}(0.004) & 0.258(0.004) & 0.233(0.005) & 0.170(0.007) \\ \hline
%		\rowmidlinewc
%		\rowcol
&Mean &  \textbf{0.312}(0.017)  & 4.668(0.046) & 1.707(0.060) & 2.408(0.063) \\
%		\rowcol
\multirow{-2}{*}{M2}&SD &  \textbf{0.247}(0.012) & 0.793(0.008) & 0.857(0.017) & 2.384(0.602) \\ \hline
	%	\rowmidlinecw
		\multirow{2}{*}{M3}&Mean & \textbf{3.377}(0.196) & 4.926(0.080) & 39.084(0.929) & 9.015(0.341) \\
		&SD &  \textbf{2.082}(0.126) & 8.131(0.235) & 15.70(0.488) & 11.53(1.140) \\ \hline
%		\rowmidlinewc
	%	\rowcol
&Mean &  0.016(0.003) & \textbf{0.004}(0.001) & 0.063(0.002) & 0.006(0.002) \\
	%	\rowcol
\multirow{-2}{*}{M4}&SD & \textbf{0.027}(0.005) & 0.131(0.001) & 0.076(0.001) & 0.046(0.001) \\ \hline
%		\bottomlinec
	\end{tabular}
	\caption{Mean squared error(MSE) of the estimated conditional mean, the estimated standard deviation and the corresponding simulation standard errors (in parentheses).
%The first three column corresponds to the primal form, Fenchel %dual and Donsker dual form of GCDS.
The smallest MSEs are in bold font.}
	\label{tab:point}
\end{table}

For GCDS, we first generate $J$ samples $\{\eta_j:j=1, \ldots, J\}$ from the reference distribution $P_{\eta}$ and calculate
conditional samples  $\{\hat G(\eta_j, x_i), j = 1,\dots, J\}.$
We take $J=10,000$.
The estimated conditional standard deviation is calculated as the sample standard deviation of the conditional samples.
The estimated conditional standard deviation of other methods are computed by numerical integration
$\hat{\text{SD}}(Y|x_i) = \sqrt{\int [y-\hat E(Y|x_i)]^2f(y|x_i)dy}$ using 1000 subdivisions.

We repeat the simulations 10 times.  The average MSEs and simulation standard errors are summarized in Table \ref{tab:point}. We see that, comparing with CKDE and FlexCode and NNKCED,  GCDS has the smallest MSEs for estimating conditional mean and conditional SD in most cases.

\begin{figure}[H]
	\centering
\includegraphics[width=4.6 in, height=2.4 in]
{./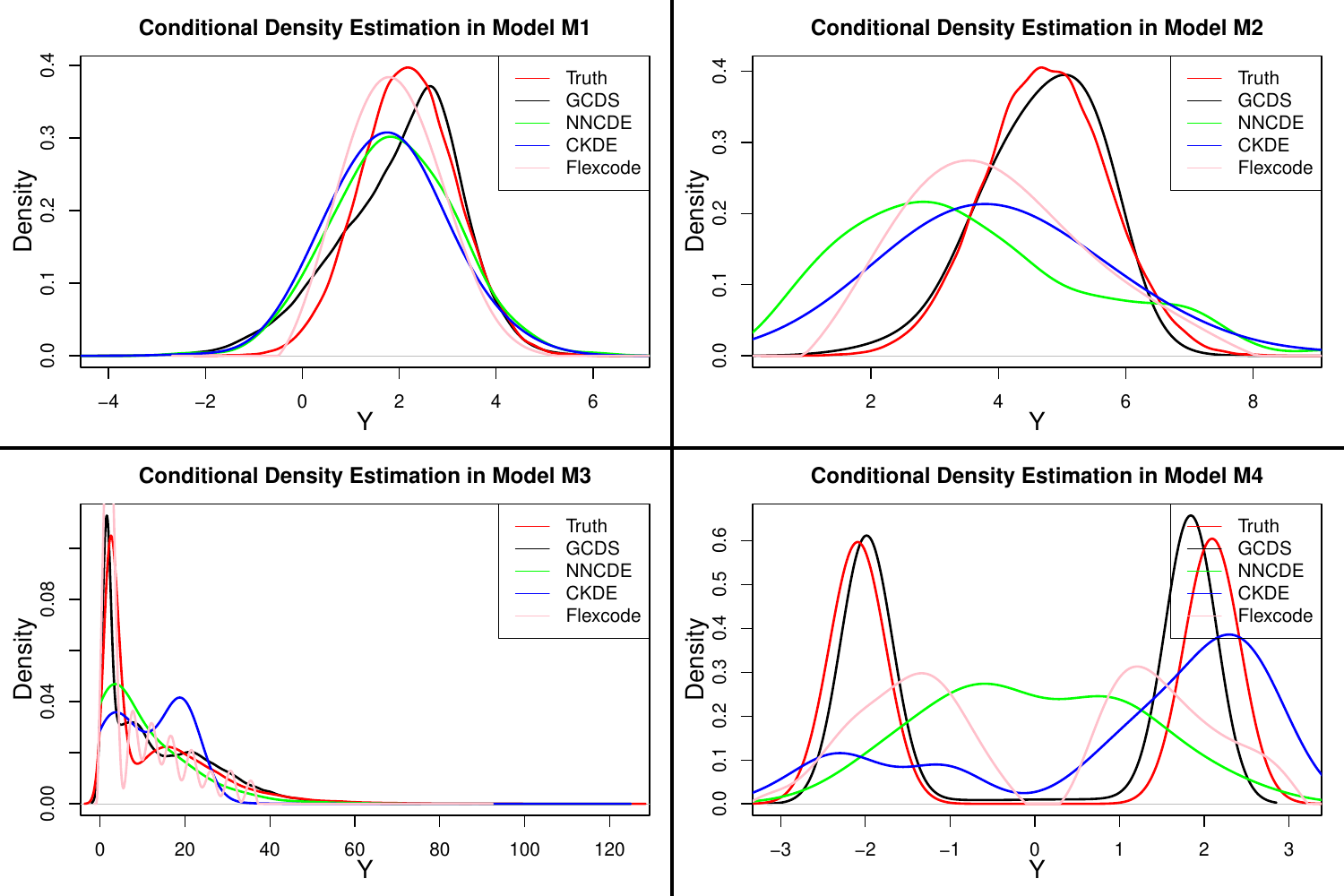}
\caption{Comparison of density estimation in models (M1) to (M4).
The conditional density function corresponding to GCDS is estimated using the samples generated from GCDS with kernel smoothing.}
	\label{fig:pdf1234}
\end{figure}

In Figure \ref{fig:pdf1234}, we display the estimated conditional density functions for a randomly generated value of $X$.
%to show the performance of GCDS compared with CKDE, FlexCode and NNKCDE.
The true conditional distributions of $Y|X$  for models (M1) to (M4) are:  (M1), $N(2.19,1)$; (M2), $N(4.75, 0.96^2)$; (M3): the mixture of half $7.42\times \text{log-normal}(-1, 0.5^2)$ and half $7.42\times \text{log-normal}(1, 0.5^2)$; (M4): the mixture of half $N(-2.09,0.25^2)$ and half $N(2.09, 0.25^2)$.
The conditional density function corresponding to GCDS is estimated based the samples generated based on
GCDS using kernel smoothing. This plot shows that GCDS yields better conditional density estimates than CKDE, NNCDE and FlexCode.

%\subsection{Data examples}

\subsection{The abalone dataset}
%\subsubsection{Abalone}
The abalone dataset is available at UCI machine learning repository \citep{Dua:2019}.
It contains the number of rings of abalone and other physical measurements. The age of abalone is determined by cutting the shell through the cone, staining it, and counting the number of rings through a microscope, a time-consuming process. Other measurements, which are easier to obtain, are used to predict the number of rings that determines the age.
This dataset contains 9 variables. They are \textit{sex, length, diameter, height, whole weight, shucked weight, viscera weight, shell weight} and \textit{rings}.
%SEX, LENGTH, DIAMETER, HEIGHT, WHOLE WEIGHT, SHUCKED WEIGHT, %VISCERA WEIGHT, SHELL WEIGHT and RINGS.
Except for the categorical variable \textit{sex}, all the other variables are continuous. The variable \textit{sex} codes three groups: female, male and infant, since the gender of an infant abalone is not known. The sample size is 4177. We use 90\% of the data for training and 10\% of the data  as the testing set.
The neural networks used in the analysis are specified as follows: the generator network is a fully connected network with 2 hidden layers with widths 50 and 20; the discriminator networks
is a fully connected network with 2 hidden layers with widths 50 and 25. The dimension of the noise vector of the noise vector is set to be $m=5.$

\bigskip
\begin{figure}[H]
	\centering
	\includegraphics[width=4.0 in, height=2.0 in]{./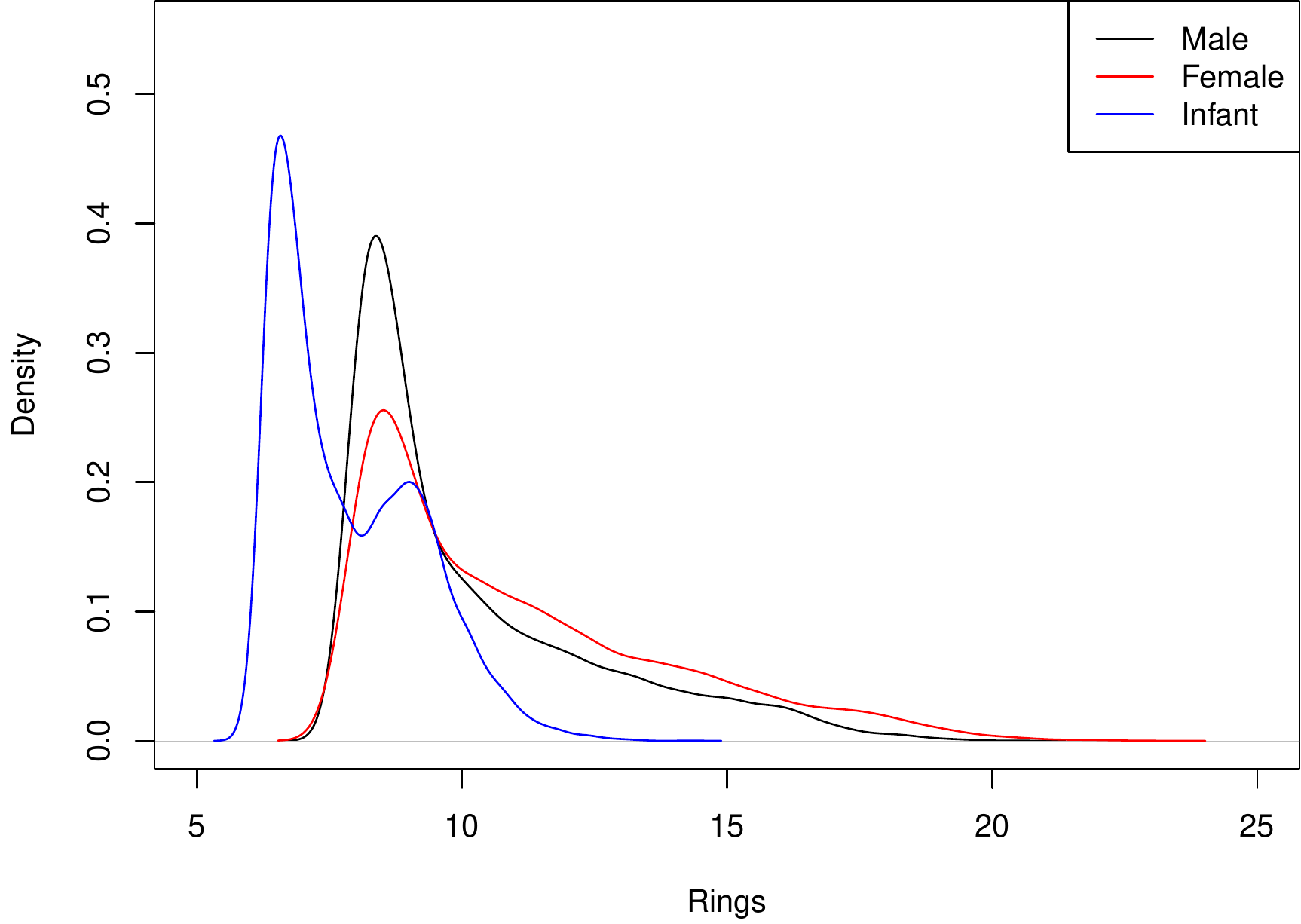}
	\caption{Estimated conditional densities for the female, male and infant groups in the abalone dataset. Each line represents the kernel conditional density estimation based on the samples generated using GCDS given the group average values of the covariates.}
%The infant group has fewer number of \textit{rings} compared with the male and female groups. %The female abalones have a slightly higher chance to grow older than male abalone.}
%The conditional distributions are skewed to the right. }
% has positive skew for all genders.}
	\label{fig:abaringgroup}
\end{figure}

We take \textit{rings} as the response $Y \in \real$ and the other measurements as the covariate vector $X \in \real^9$.
%We estimate the conditional generator based on the training data.
Figure \ref{fig:abaringgroup} shows the estimated conditional density based on the training dataset for 3 groups: female, male and infant,
at the value of the group means of the remaining covariates. We see that the values of \textit{rings} of the infant group are smaller than those of the female and male groups. The female abalones tend to have slightly higher numbers of rings than  male abalones. In addition,  the conditional distributions are skewed to the right for all the three groups.

\begin{figure}[H] %tbp]
	\centering
	\includegraphics[width=5.6 in, height=2.0 in]{./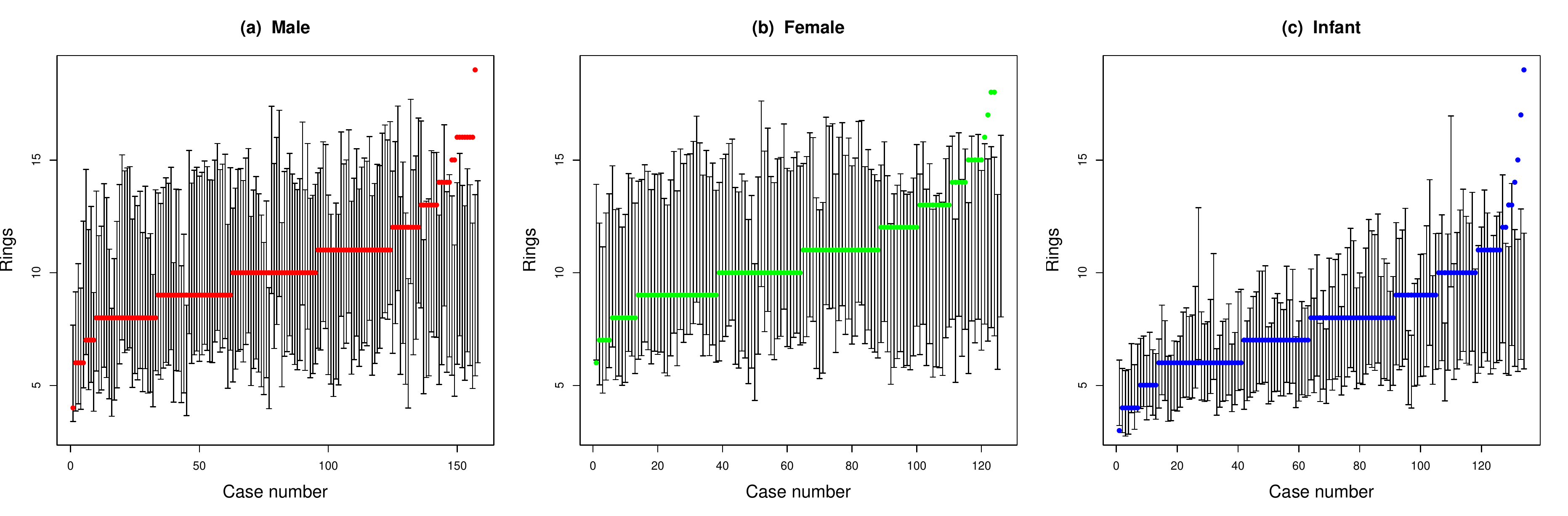}
	\caption{The prediction intervals for the testing set. All 418 abalones in the testing set are divided into three groups, (a) male, (b) female,  and (c) infant.}
	\label{fig:abaringsingle}
\end{figure}

To examine the prediction performance of the estimated conditional density, we construct the 90\% prediction interval for the number of rings of each abalone in the testing set. The prediction intervals are shown in Figure \ref{fig:abaringsingle}. The actual number of rings are plotted as a solid dot. The actual coverage for all 418 cases in the testing set is 89.71\%, close
to the nominal level of 90\%. The numbers of rings that are not covered by the prediction intervals are the largest ones in each group.

\subsection{MNIST handwritten digits}
We now illustrate the application of GCDS to high-dimensional data problems and demonstrate that it can easily handle the models when either of both of $X$ and $Y$ are high-dimensional.
The data example we use is
the MNIST handwritten digits dataset \citep{mnist},  which contains 60,000 images for training and 10,000 images for testing. The images are stored in $28\times28$ matrices with gray color intensity from 0 to 1. Each image is paired with a label in $\{0,1\dots,9\}$.
We use GCDS to perform two tasks: generating images from labels and reconstructing the missing part of an image.

\noindent
\textbf{Generating images from labels }
We generate images of handwritten digits given the label. In this problem,  the predictor $X$ is a categorical variable representing the ten digits: $\{0,1,\dots,9\}$ and the response $Y$ represents $28 \times 28$ images.
We use one-hot vectors in $\real^{10}$ to represent these ten categories. So the dimension of $X$ is 10
and the dimension of  $Y$ is $28\times 28=784.$
%transform the label information into one-hot vector.
The response $Y\in [0,1]^{28\times28}$ is a matrix representing the intensity values. For the discriminator $D$, we use a convolutional neural network (CNN) with 3 convolution layers with 128, 256, and 256 filters to extract the features of the image and then concatenate with the label information (repeated 10 times to match the dimension of the features). The concatenated information is sent to a fully connected layer and then to the output layer. For the generator $G$, we concatenate the label information with random noise of dimension 100. Then it is fed to a CNN with 3 deconvolution layers with 256, 128, and 1 filters. %The network is trained for
%about
%50 epochs.

\begin{figure}[H]
	\centering
\includegraphics[width=4.6in, height=2.2in]{./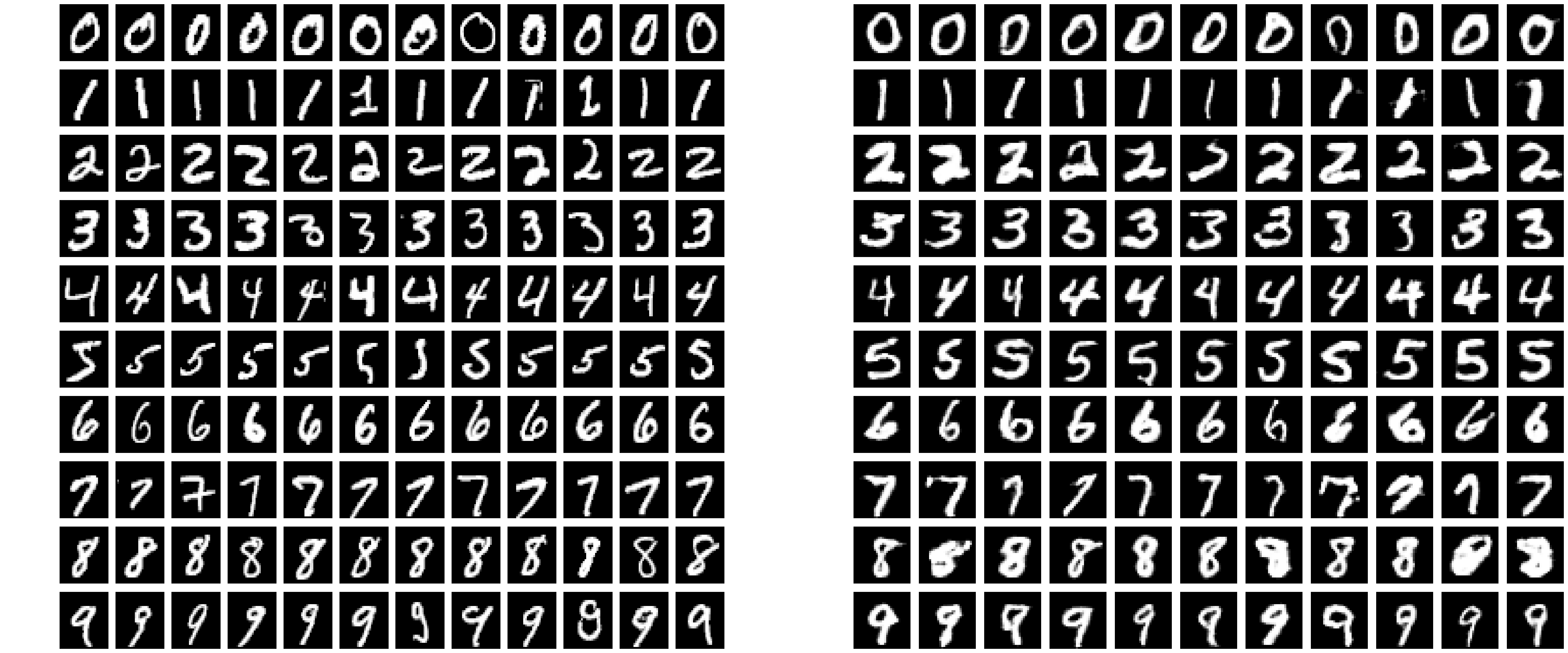}
	\caption{MNIST dataset: real images (left panel) and generated images given the labels (right panel).}
	\label{fig:mnist_xlabel}
\end{figure}

Figure \ref{fig:mnist_xlabel} shows the real images (left panel) and generated images (right panel).
We see that the generated images are similar to the real images and it is hard to distinguish the generated ones from the real images.
Also, there are some differences in the generated images, reflecting the random variations in the generating process.

\noindent
\textbf{Reconstructing missing part of an image }
We now illustrate using GCDS to reconstruct an image when part of the image is missing with the MNIST dataset.
Suppose we only observe ${1}/{4}, {1}/{2}$ or ${3}/{4}$ of an image and would like to reconstruct the missing part of the image.
% That is, $X\in [0,1]^{14*14}$ while $Y\in %[0,1]^{28*28-14*14}$.
For this problem, let $X$ be the observed part of the image
and let $Y$ be the missing part of the image. Our goal is to reconstruct $Y$ based on $X$.
For the discriminator, we use two convolutional networks to process  $X$ and $Y$ separately. The filters are then concatenated together and fed into another convolution layer and fully-connected layer before output. For the generator, $X$ is processed by a fully-connected layer followed by 3 deconvolution layers.

\begin{figure}[H]
	\centering
	\includegraphics[width=4.6 in, height=2.2in]{./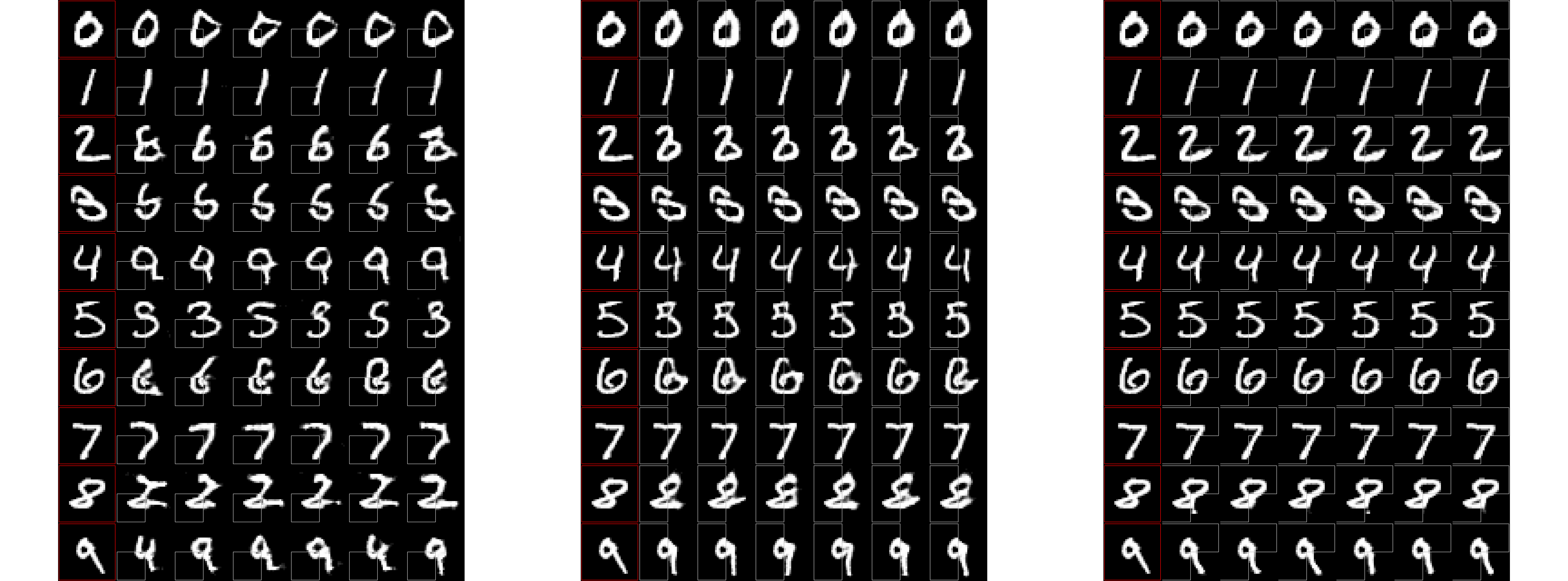}
	\caption{Reconstructed images given partial image in MNIST dataset. The first column in each panel consists of the true images, the other columns give the constructed images.
In the left panel, the left lower 1/4 of the image is given;  in the middle panel, the left 1/2 of the image is given; in the right panel, 3/4 of the image is given.}
	\label{fig:mnist_ximage}
\end{figure}

In Figure \ref{fig:mnist_ximage}, three plots from left to right corresponds to the situations when ${1}/{4}, {1}/{2}$ and ${3}/{4}$ of an image are given. In each subplot, the first column
%(bounded in red box)
contains the true images in the testing set. The gray boxes show the given areas. Each row contains six reconstructions of the image. The digits ``0'', ``1'' and ``7'' are easy to reconstruct. Even when only 1/4 of their images are given, GCDS can correctly reconstruct them. The other digits are more difficult. If only 1/4 of their images are given, it is impossible to reconstruct them.  However, as the given area increases from 1/4 to 1/2 and then 3/4 of the images, GCDS is able to reconstruct the images correctly, and the reconstructed images become less variable and more similar to the true image. For example, for the digit ``2'', if only the left lower 1/4 of the image is given, the reconstructed images tend to be incorrect; the reconstruction is only successful when 3/4 of the image is given.

\section{Conclusion}
In this paper we propose GCDS, a generative approach to sampling from a conditional distribution. We provide theoretical support for GCDS by showing that the conditional generator converge in distribution  to the underlying target conditional distribution under mild conditions. Our numerical experiments demonstrate that it works well in a variety of situations from the standard nonparametric conditional density estimation problems to more complex image data problems.

Several questions deserve further investigation.
First, it would be interesting to derive the convergence rate of the sampling distribution that strengthens the consistency result in Theorem \ref{consistency}
and provide conditions under which
%the requirements (N1) and (N2) on the network specifications allow
the number of coefficients in the deep neural network is allowed to be greater than the sample size.
%Second, it is helpful to combine a dimension reduction and representation learning mechanism in %the framework of GCDS. This will better deal with the problem of  ``curse of dimensionality'' in %problems with high-dimensional data if the underlying distribution of the data is supported on a %lower-dimensional manifold.
Second, while the conditional generator provides all the information of the conditional distribution, it is still useful to obtain an estimate of the functional form of the conditional density. How to obtain a good estimator of the conditional density function in the present framework is an open question, especially when the dimension of $(X, Y)$ is high.
Finally, as a proof of concept we demonstrated that GCDS yields reasonable results
for some simple image analysis tasks with the MNIST dataset.
%and STL-10  image datasets.
It would be interesting to apply GCDS to more complex image analysis problems.
% with image data and other types of complex data such as texts and natural languages.
% Finally, we established a consistency result in Theorem \ref{consistency}. It would be interesting to %improve this result so that a rate of convergence can be obtained.
We intend to study these challenging problems in the future.

\section*{Supplementary material} Additional numerical experiment results and technical details are provided in the supplementary material.

%\iffalse
\section*{Acknowledgements}
The work of X. Zhou and J. Huang is supported in part by the U.S. National Science Foundation grant DMS-1916199.
The work of Y. Jiao is supported by the National Science Foundation of China grants No.11871474 and No.61701547. The work of J. Liu is supported by the Duke-NUS Graduate Medical School WBS: R-913-200-098-263 and MOE2016-M2-2-029 from the Ministry of Education, Singapore.
%\fi

%\newpage
%\begin{singlespace}
%\bibliography{gcde_bib}
%\end{singlespace}

\spacingset{1.5} % DON'T change the spacing!
\appendix
\setcounter{equation}{0}  % reset counter
\renewcommand{\theequation}{C.\arabic{equation}}
  % redefine the command that creates the equation no.
\setcounter{table}{0}
\renewcommand{\thetable}{A.\arabic{table}}
\setcounter{figure}{0}
\renewcommand{\thefigure}{A.\arabic{figure}}
\setcounter{equation}{0}  % reset counter
\setcounter{lemma}{0}
     \renewcommand{\thelemma}{\Alph{section}.\arabic{lemma}}
\setcounter{theorem}{0}
     \renewcommand{\thetheorem}{\Alph{section}.\arabic{theorem}}

\bigskip\bigskip
\noindent
\textbf{\Large Supplementary material}

In this Supplementary Material, we provide additional numerical experiment results, present the variational forms of three commonly used $f$-divergences, explain the difference between
the $f$-divergence definition used in this paper and that in \cite{nguyen2010estimating},  and prove Lemma \ref{NOLemma}, Lemma \ref{lem2}, and Theorem \ref{consistency}.

%\iffalse

\section{Additional numerical experiments}

\subsection{Additional simulation results}
\subsubsection{Conditional quantile estimation}
We consider the conditional distributions based on the following models as given in (M1)-(M4) in the main text:
\begin{enumerate}
 \setlength\itemsep{-0.2 cm}
\item Model (M1):
$$Y = %T_1(X,\varepsilon) =
X_1^2 + \exp(X_2 + X_3/3) + \sin(X_4 + X_5) + \varepsilon,\ \  \varepsilon\sim N(0,1).$$
This is a nonlinear model with an additive error term.

\item Model (M2):
$$Y = %T_2(X,\varepsilon) =
 X_1^2 + \exp((X_2 +X_3/3)) + X_4 - X_5 + (0.5+ X_2^2/2 +X_5^2/2)\times \varepsilon,\ \  \varepsilon\sim N(0,1).$$
This model has an additive error term whose variance depends on the predictors.

\item Model (M3):
$$Y = %T_3(X,\varepsilon) =
(5 +X_1^2/3 + X_2^2 + X_3^2+X_4+X_5) *\exp(0.5\times\varepsilon)$$
$$\varepsilon\sim \mathbb{I}_{\{U<0.5\}}\times N(-2,1)+\mathbb{I}_{\{U>0.5\}}\times N(2,1)\  \text{where}\ U\sim \text{Uniform}(0,1),$$ with $p = 30$.
This model has a multiplicative non-Gassisan error term.

\item Model (M4):
$$ Y = %T_4(X,U) =
\mathbb{I}_{\{U<0.5\}}N(-X_1, 0.25^2) + \mathbb{I}_{\{U>0.5\}}N(X_1, 0.25^2),$$
$$U \sim \text{Uniform}(0,1).$$
In this example, the conditional distribution is a mixture of two normal distributions.
\end{enumerate}
In each of  the models above, the covariate vector  $X$ is generated from standard multivariate normal distribution.
%We use a sample of size 5000 for training.

In models (M1) to (M3), the conditional generator $G$ is parameterized by a
{one-layer} neural network; in model (M4),  it is parameterized by a two-layer fully connected neural network. The log-density ratio function $D$ is parameterized by a
{two-layer} fully connected neural network. We use the stochastic gradient descent algorithm as implemented in Adam in training the neural networks.

We now examine the estimation of the conditional quantiles for a given $X=x$ defined as
$q_\tau = F_{Y|x}^{-1}(\tau|x)$ for $0 < \tau < 1$.

The conditional quantiles can be estimated easily using Monte Carlo.  For a given $X=x$, we generate $J$ a random sample of i.i.d.  $\eta_1, \ldots, \eta_{J}$ from the reference distribution $P_{\eta}$.
Then the estimated quantiles of the conditional distribution at $X=x$  is calculated based on
$\hat G(x,\eta_j), j=1, \ldots, J.$ We take $J=10,000.$
For FlexCode, KCDE and NNCDE, we solve $\hat F_{Y|x}(q) = \tau$ for $q$ to obtain the estimated $\tau$th conditional quantile.
We use a test data set $\{x_1,\dots, x_{k}\}$ of size $k=2000$ .
We consider the MSE  of the estimated conditional quantile of level $\tau$ defined as
$$ \text{MSE}(\tau) = \frac{1}{k}\sum_{i=1}^{k}[\hat{F}_{Y|X}^{-1}(\tau|X=x_i) - F_{Y|X}^{-1}(\tau|X=x_i)]^2.$$
We consider five levels of $\tau = 0.05, 0.25, 0.5, 0.75, 0.95$.

The simulation is repeated 10 times. The average MSEs and the corresponding simulation standard errors are included in Table \ref{tab:conditionalquantile1}.
We see all methods have larger MSEs in the tail area than in the center. This problem is more severe for KCDE and NNCDE. In model (M4), FlexCode fits the data near the first mode by using more basis functions, which leads to over-fitting in right tail area. Thus the MSE of FlexCode is small when $\tau = 0.05, 0.25$ but large when $\tau =0.75, 0.95$. In model (M4), GCDS has a larger MSE at $\tau = 0.5$. The reason is that the estimate of GCDS is a bimodal conditional distribution with few observations around
zero, which can be seen from Figure \ref{fig:pdf1234} that the true conditional distribution has low density around zero.
\begin{table}[htbp]
	\centering
	\rowcolors{2}{gray!10}{white}
	\begin{tabular}{cC{2.7cm}C{2.7cm}C{2.7cm}C{2.7cm} } 
	\topline
		\headcol &  GCDS & NNKCDE & CKDE & FlexCode\\
		\midline
		\multicolumn{5}{c}{Model (M1)}\\
		$\tau=0.05$ & \textbf{0.356}(0.026) & 1.875(0.011) & 1.231(0.015) & 1.327(0.010) \\
		$\tau=0.25$ & \textbf{0.281}(0.018) & 1.355(0.005) & 0.646(0.011) & 0.996(0.012) \\
		$\tau=0.50$ & \textbf{0.263}(0.015) & 1.028(0.009) & 0.386(0.009) & 0.706(0.011) \\
		$\tau=0.75$ & \textbf{0.268}(0.014) & 0.800(0.013) & 0.306(0.008) & 0.469(0.008) \\
		$\tau=0.95$ & \textbf{0.306}(0.015) & 1.106(0.018) & 0.698(0.016) & 0.563(0.018) \\
		\midrule
		\multicolumn{5}{c}{Model (M2)}\\
		$\tau=0.05$ & \textbf{1.044}(0.057) &  5.318(0.028) & 3.797(0.047) & 4.149(0.143) \\
		$\tau=0.25$ & \textbf{0.427}(0.024) &  5.002(0.023) & 2.315(0.051) & 3.405(0.052) \\
		$\tau=0.50$ &  \textbf{0.334}(0.019) &  4.818(0.035) & 1.904(0.062) & 3.086(0.049) \\
		$\tau=0.75$ &  \textbf{0.411}(0.025) & 4.600(0.046) & 2.034(0.065) & 2.635(0.097) \\
		$\tau=0.95$ &  \textbf{0.993}(0.037) & 5.395(0.104) & 4.391(0.077) & 3.523(0.149) \\
		\midrule
		\multicolumn{5}{c}{Model (M3)}\\
		$\tau=0.05$ & 0.994(0.091) & 1.226(0.083) & 28.51(0.729) & \textbf{0.503}(0.096) \\
		$\tau=0.25$ & 1.209(0.117) & 0.525(0.026) & 32.50(1.371) & \textbf{0.270}(0.005) \\
		$\tau=0.50$ & 8.298(0.631) & \textbf{4.125}(0.134) & 57.24(1.578) & 4.551(0.164) \\
		$\tau=0.75$ & \textbf{9.364}(0.806) & 13.52(0.205) & 78.60(2.065) & 19.05(0.708) \\
		$\tau=0.95$ & \textbf{17.31}(0.719) & 58.25(1.495) & 153.3(3.865) & 73.26(2.877) \\
		\midrule
		\multicolumn{5}{c}{Model (M4)}\\
		$\tau=0.05$ & \textbf{0.080}(0.016) & 0.319(0.004) & 0.235(0.009) & 0.082(0.008) \\
		$\tau=0.25$ & \textbf{0.037}(0.006) & 0.150(0.002) & 0.165(0.009) & 0.063(0.005) \\
		$\tau=0.50$ & 0.881(0.048) & 0.314(0.002) & 0.466(0.010) & \textbf{0.406}(0.010) \\
		$\tau=0.75$ & \textbf{0.026}(0.005) & 0.156(0.002) & 0.171(0.008) & 0.070(0.002) \\
		$\tau=0.95$ & \textbf{0.072}(0.018) & 0.313(0.004) & 0.217(0.004) & 0.086(0.007) \\			
		\bottomline
	\end{tabular}
	\caption{Mean squared prediction error (MSE) of conditional quantiles in models (M1) to (M4) and the corresponding simulation standard error(in parentheses). The smallest MSEs are in bold font.}
	\label{tab:conditionalquantile1}
\end{table}

\begin{comment}
\subsection{2D Gaussian mixture}
In this example, we consider the 2D Gaussian mixture model:
\begin{eqnarray*}
Y_1 &=& \mathbb{I}_{\{U_1\le 0.5\}}N(-x, 0.25^2) + \mathbb{I}_{\{U_1>0.5\}}N(x, 0.25^2),\\
Y_2 &=& \mathbb{I}_{\{U_2\le 0.5\}}N(-x, 0.25^2) + \mathbb{I}_{\{U_2>0.5\}}N(x, 0.25^2),
\end{eqnarray*}
where
$U_1,U_2$ are i.i.d. $\text{Uniform}(0,1)$ and $X\sim N(0,1).$

%The training is done in the same way as described in the %previous section.
We evaluate the quality of conditional samples and conditional density estimation at three different $X$ values, $X = 0.5, 1$ and $1.5$. The results are shown in figure \ref{fig:2dmixture}.
We can see that GCDS yields better estimates than NNKCDE and CKDE.
%gives the most accurate estimation.

\begin{figure}[!htbp]
	\centering
	\includegraphics[width=4.0 in, height=3.0 in]{./simulation/2dmixture_fenchel.pdf}
	\caption{Comparison of density estimation in 2D Gaussian mixture model. From top to bottom, each row are the conditional estimation at $X = 0.1, 0.5, 1, 1.5$ respectively. }
	\label{fig:2dmixture}
\end{figure}
\end{comment}

\subsubsection{The two-dimensional helix model}
We now visualize some simulation results of GCDS and compare with NNKCDE and CKDE for a two-dimensional $Y$. FlexCode does not support the case when the dimension of $Y$ is greater than 1, so we do not include it here.

\begin{figure}[H]
	\centering
	\includegraphics[width=4.0in, height=2.8in]{./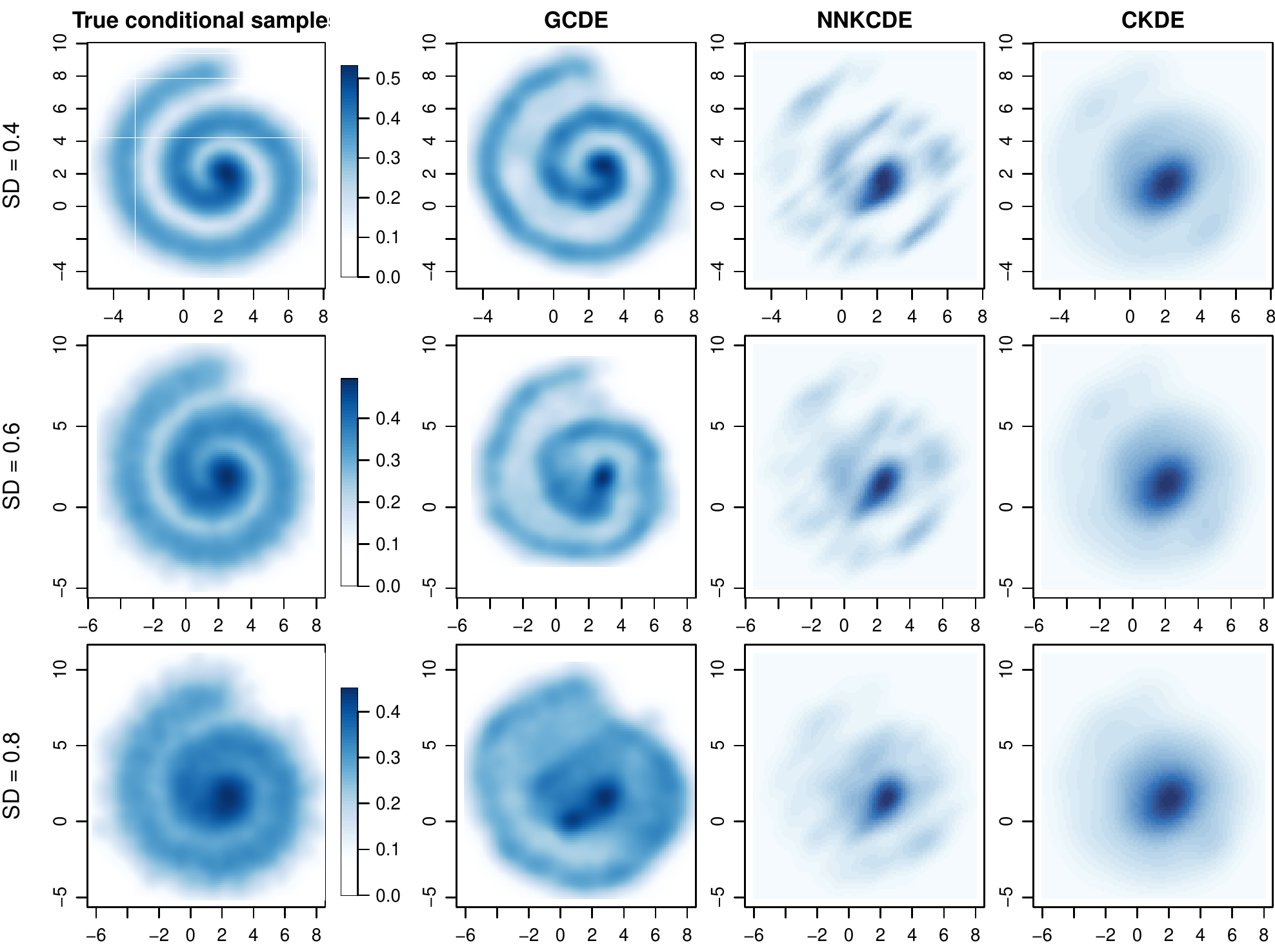}
	\caption{Comparison of conditional density estimation in the 2D conditional helix model. From top to bottom, each row exhibits the conditional density estimates at $X = 1$ with the noise standard deviation $\sigma=0.4, 0.6$, and $0.8$, respectively. }
	\label{fig:2dcondiinvo}
\end{figure}

We consider the conditional helix model:
$$Y_1 = 2X + U\sin(2U) + \varepsilon_1,$$
$$Y_2 = 2X + U\cos(2U) + \varepsilon_2,$$
where $X, U, \varepsilon_1$ and $\varepsilon_2$ are independent, $ X\sim N(0,1), U \sim \text{Uniform}[0,2\pi], \varepsilon_1\sim N(0,\sigma^2)$ and $\varepsilon_2\sim N(0,\sigma^2).$
In this model,  $(U, \varepsilon_1, \varepsilon_2)$ contributes to the noise terms and  is not observed. The value of $\sigma$ determines the noise level. The conditional distribution is a helix with random noise.

We visualize the quality of conditional samples and conditional density estimation given $X = 1$ at three noise levels, $\sigma = 0.4, 0.6$, and $0.8$.
The results are shown in Figure \ref{fig:2dcondiinvo}.
We see clearly that GCDS yields the best estimation of the conditional densities.

\subsection{Effects of the neural network structure}

\subsubsection{Simulated data}
We consider three neural network models for the generator $G$ and another three corresponding neural network models for the discriminator $D$. Each set of the three networks include
 \textit{Half}, \textit{Base} and \textit{Double}, where the size of \textit{Half} network is half  of
 the \textit{Base}, and the size of  \textit{Double} is about double of \textit{Base}.
 The detailed specifications of these networks are given in Table \ref{netspecSR}.
 The \textit{Base} network is used in the simulation studies reported in the manuscript.
%repeat the simulation with three different scenarios of width and depth
%in the simulation studies in models (M1)-(M4). The number of hidden layers and width of
The number of replications in each setting is 10. The values of $m$ are given as described above, that is, for the simulation models (M1) to (M3), we set $m=3$; for (M4), $m=4$.

\begin{table}[H]
	\centering
\label{netspecSR}
{\scriptsize
		\begin{tabular}{c|ll|ll|ll}\hline
& \multicolumn{2}{c|}{\textit{\normalsize Half}}   & \multicolumn{2}{c|}{\textit{\normalsize Base}}  & \multicolumn{2}{c}{\textit{\normalsize Double}} \\ \hline
 Model   & $G$         & $D$
              &$ G$         & $D$
              & $G$         & $D$          \\  \hline
 (M1)-(M3)  & $L=1$ & $L=2$
           & $L=1$ & $L=2$
           & $L=2$ & $L=3$ \\
           & $ W=(25)$ & $ W=\{25, 13\}$
           & $W=(50)$ & $W=(50, 25) $
           & $W=(100, 50)$ & $ W=(100, 50, 25)$ \\ \hline
 (M4)  & $L=2$ & $L=2$
           & $L=2$ & $L=2$
           & $L=3$ & $L=3$ \\
           & $ W=(20,8)$ & $ W=\{25, 13\}$
           & $W=(40,15)$ & $W=(50, 25) $
           & $W=(80,30,30)$ & $ W=(100, 50, 25)$ \\ \hline
\end{tabular}
\caption{Simulated data from models (M1)-(M4): network parameter specifications. $L$: the number of hidden layers, $W$: the widths of the layers.}
}
\end{table}

Table \ref{tab:1d_wd} shows the results. Since model (M3) has a larger range of distribution and heavy tail, it is more sensitive to the choice of width and depth. However, this may be relieved by some simple transformation(For example, take logarithm). For other models, there will be some increase in MSE if the width and depth is not optimal, but the difference is mild.

\begin{table}[!ht]
	\centering
	\begin{tabular}{c|rrr}

		\hline
		& \textit{Half \ \ \ \ } & \textit{Base\ \ \ \ } & \textit{Double \ \ \   } \\ \hline
		(M1) & 0.187(0.007) & 0.152(0.005) & 0.240(0.014) \\
		\hline
		(M2) & 0.410(0.025) & 0.333(0.024) & 0.717(0.058) \\
		\hline
		(M3)  & 11.583(8.515) & 4.031(0.425) & 120.046(7.585) \\
		\hline
		(M4) & 0.018(0.005) & 0.019(0.003) & 0.042(0.008) \\
%		\hline
%		(M1) & 0.19(0.01) & 0.15(0.01) & 0.24(0.01) \\
%		\hline
%		(M2) & 0.41(0.03) & 0.33(0.02) & 0.72(0.06) \\
%		\hline
%		(M3)  & 11.58(8.52) & 4.03(0.43) & 120.05(7.59) \\
%		\hline
%		(M4) & 0.02(0.01) & 0.02(0.01) & 0.04(0.01) \\
		\hline
	\end{tabular}
	\caption{MSE for estimating the conditional mean in the models (M1)-(M4) by GCDS, using
the three neural network models \textit{Half}, \textit{Base} and \textit{Double}.}
%with halved or doubled width and depth}
	\label{tab:1d_wd}
\end{table}

\subsubsection{MNIST data: effects of network structures and noise dimension $m$}

To study the influence of the network parameters (width and depth) on the quality of the generated images with the MNIST dataset, we compare three models: \textit{Small}, \textit{Median} and
\textit{Large}. The network parameters (depth, width) are given in Table \ref{NetSrb}.

\begin{comment}
BASELINE; SMALL where the depth is decreased by 1 and width is halved in all layers; LARGE where depth is increase by 1 and width is doubled in all layers. The baseline structure is 3 layers of convolution with 128,256,256 features for generator and 3 layers of deconvolution with 256,128,1 features for the discriminator.
\end{comment}

\begin{table}[H]
	\centering
\label{NetSrb}
{\scriptsize
		\begin{tabular}{ll|ll|ll} \hline
\multicolumn{2}{c|}{\textit{\normalsize Small}}   & \multicolumn{2}{c|}{\textit{\normalsize Median}}  & \multicolumn{2}{c}{\textit{\normalsize Large}} \\ \hline
  $G$         & $D$
              &$ G$         & $D$
              & $G$         & $D$          \\  \hline
  $L=2$ & $L=2$
           & $L=3$ & $L=3$
           & $L=4$ & $L=4$ \\
            $ W=(128,256)$ & $ W=(256,128,1)$
           & $W=(128,256,256)$ & $W=(256,128,1)$
           & $W=(128,128,256,256)$ & $ W=(256,128,128,1)$ \\ \hline
\end{tabular}
\caption{MNIST dataset: network parameter specifications. $L$: the number of hidden layers, $W$: the widths of the layers.}
}
\end{table}

\begin{figure}[H]
	\centering
	\includegraphics[width=6 in]{./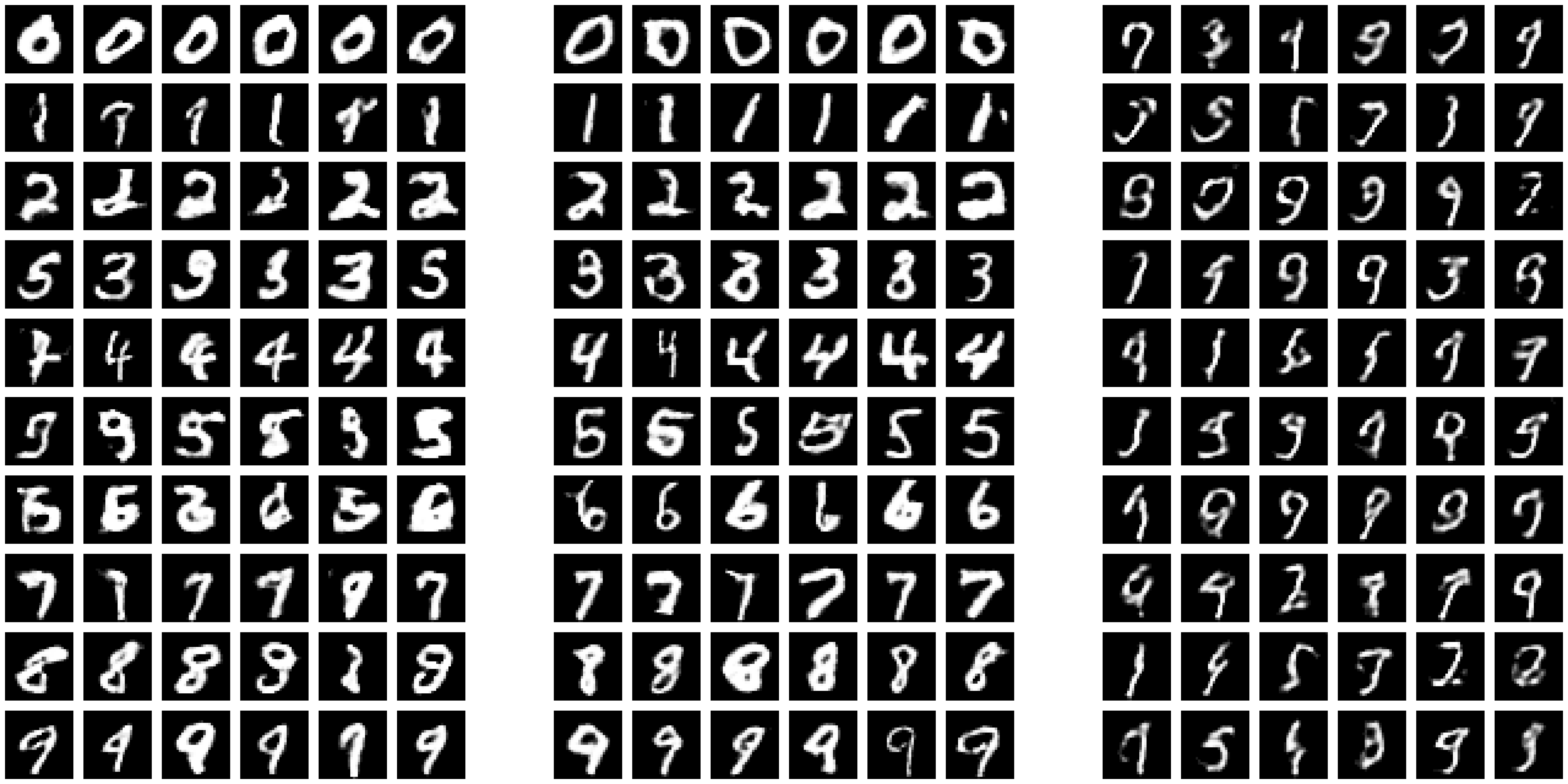}
	\caption{Generated images using the \textit{Half}, \textit{Base} and \textit{Double} neural networks. Left panel: \textit{Half}, middle panel: \textit{Base}, right panel: \textit{Double}.}
%from SMALL,BASELINE,LARGE model from left to right.}
	\label{fig:mnist_wd}
\end{figure}

Figure \ref{fig:mnist_wd} shows three panels of images generated by GCDS using \textit{Small, Median} and
\textit{Large} models from left to right. A small network has low capacity to capture the image distribution so the quality of the images are poor. Too large a network has convergence issues so the right panel does not show meaningful images.
%The FIDs are 48.80, 19.64 and 57.28 respectively.

We also examine the effects of three different values of  the noise dimension $m$.
We consider $m=$ 10, 100, 200 in GCDS for generating digital images with the MNIST dataset.
\begin{figure}[H]
	\centering
	\includegraphics[width=6 in]{./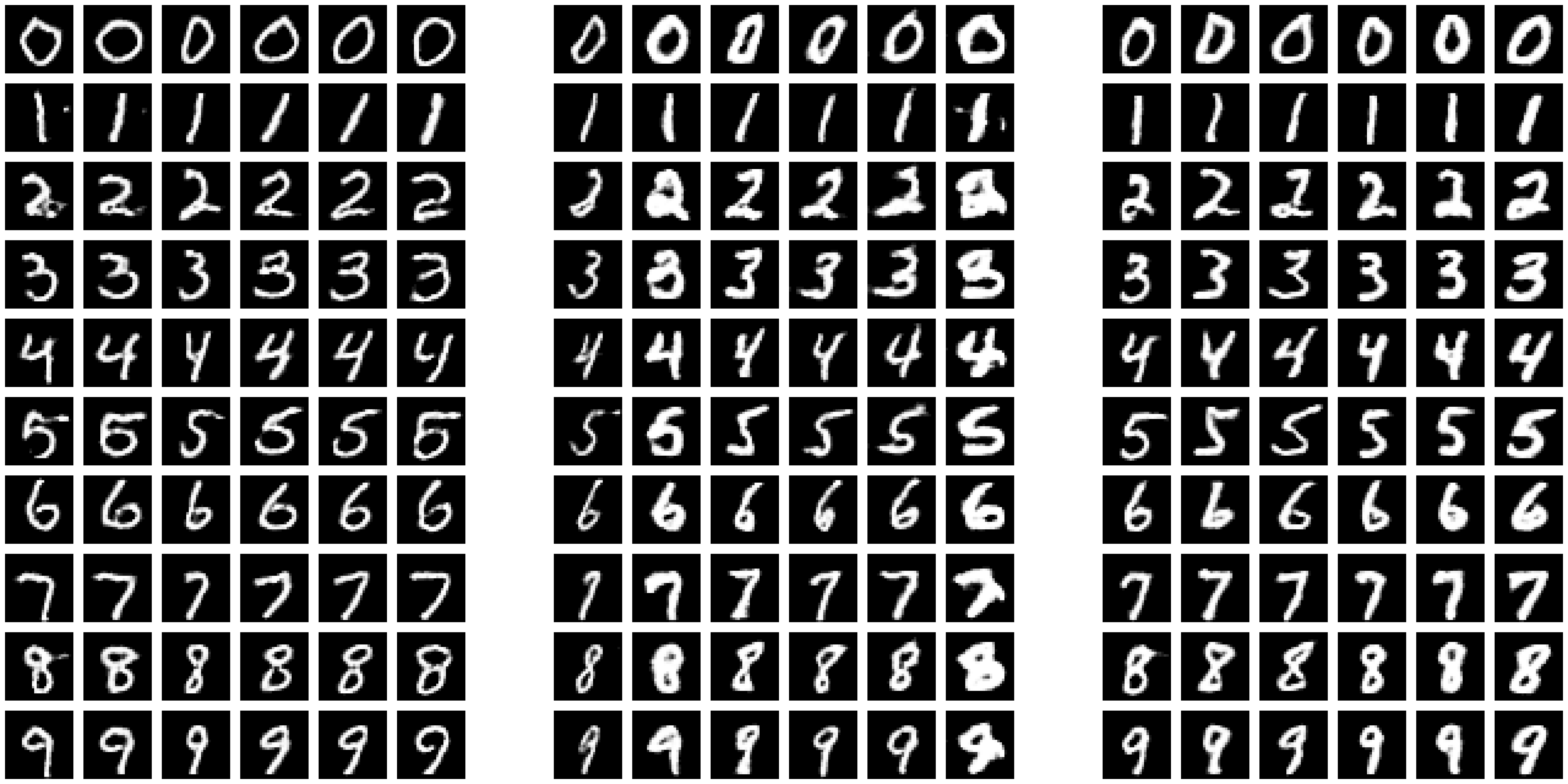}
	\caption{Generated images from GCDS with $m =$ 10 (left panel),
$m=$100 (middle panel), and $m=$200 (right panel)}
	\label{fig:mnist_m}
\end{figure}

Figure \ref{fig:mnist_m} shows the  generated images from GCDS  trained with  three different values of $m$. From left to right, the panels correspond to the $m=$10,100 and 200, respectively. The quality of the all the images look reasonable.
However, when $m=10$, there is very little variation among the images for a given
digit. This means that the learned conditional distribution of the image given the label
is essentially degenerate.
The generated images with $m=100$ and 200 are similar. They also show reasonable variations in the generated images.
This example suggests that for the complex task of image generation with high-dimensional image data, the value of $m$ should not be too small, and as long as $m$ takes a reasonable large value
(e.g., no smaller than the intrinsic data dimension), the results are stable.
Of course, for different types of image data, the value of $m$ should be determined on a case-by-case basis.
%Too small m will result in a less various conditional samples.
%The FIDs are 27.9, 19.64, 14.01 respectively.

\subsection{Performance of GCDS as sample size changes}
We conduct simulation studies to demonstrate that
the performance of GCDS improves as the sample increases, which supports the consistency result we obtained and also indicate reasonable convergence properties of GCDS.

In the first set of experiments, we consider models (M1)-(M4).  We repeat our method with different sample sizes $n=1000, 2500, 5000$ and $7500$.
For every model, we run simulations 10 times.  We  record how the mean squared errors of the estimated conditional mean $E(Y|X)$ changes with sample size.
%All other hyper parameters are the same for each model.
The results are shown in Table \ref{tab:1d_n}.
There is a clear trend that the MSEs  decrease with $n$.
%Figure \ref{fig:msevsn} shows the relationship between $\log(MSE)$ VS $\log(n)$.

\begin{table}[!htbp]
	\centering
	\begin{tabular}{c|r|r|r|r}
		\hline
		n & 1000 & 2500 & 5000 & 7500  \\
		\hline
		M1 & 0.432(0.023) & 0.262(0.012) & 0.240(0.015) & 0.234(0.013)\\
		\hline
		M2 & 2.148(0.204) & 0.549(0.058) & 0.308(0.024) & 0.281(0.015) \\
		\hline
		M3& 126.082(10.394) & 23.496(12.155) & 3.219(0.207) & 2.339(0.136) \\
		\hline
		M4 & 0.060(0.008) & 0.036(0.007) & 0.018(0.002) & 0.016(0.003)\\
		\hline
	\end{tabular}
	\caption{MSE of the estimated condition expectation using GCDS  trained with  different sample sizes (simulation standard errors in parentheses).}
	\label{tab:1d_n}
\end{table}

In the second set of experiments,  we examine how the samples size affects the generated images with the MNIST dataset. In this dataset, the total training set size is 60,000.
We train GCDS
%repeat the generating from digit simulation
with sample sizes $n=$5,000, 10,000 and 60,000.

\begin{figure}[H]
	\centering
	\includegraphics[width=6 in, height=2.5 in]{./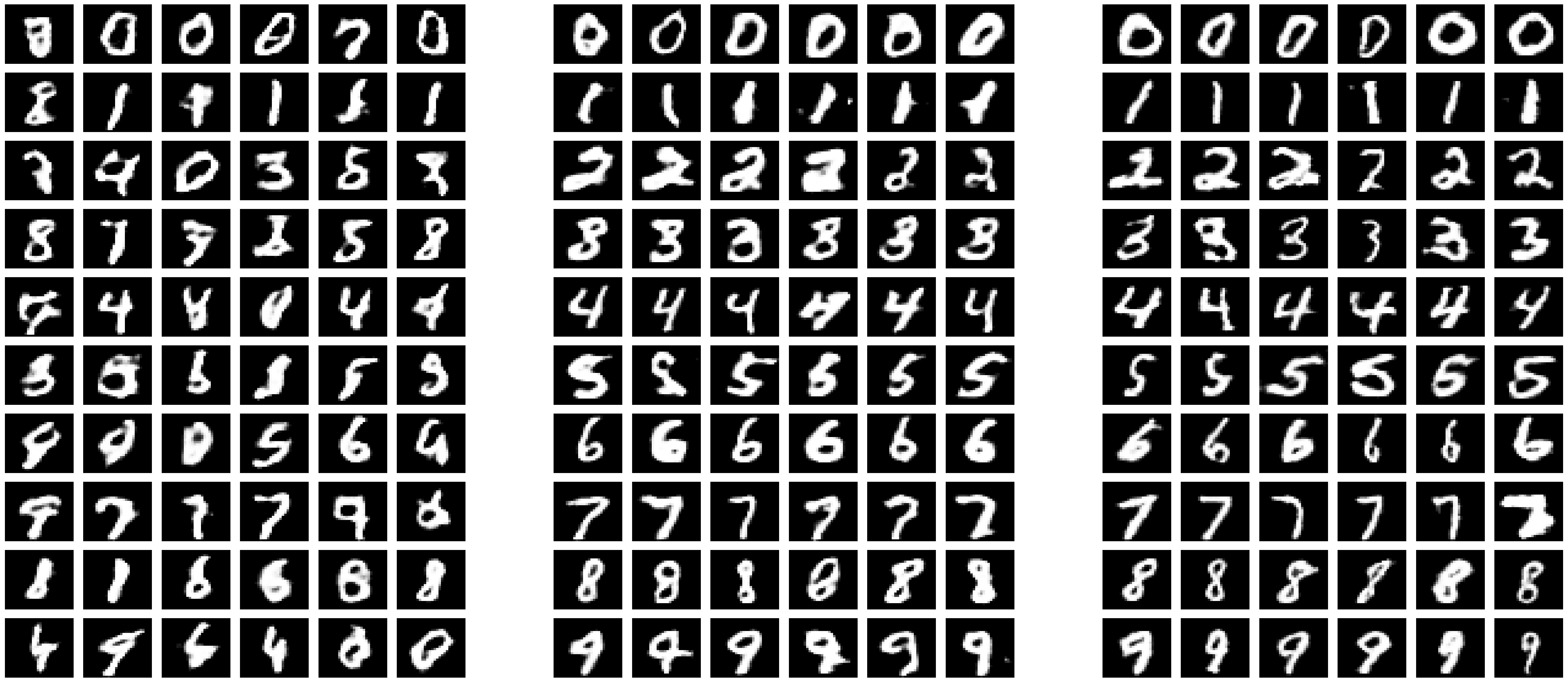}
	\caption{From the left panel to the right panel, generated images using GCDS trained with sample sizes $n=$ 5,000, 10,000, and 60,000.}
	\label{fig:mnist_n}
\end{figure}

Figure \ref{fig:mnist_n} shows the generated images. From left to right, the panels represent the generated images using GCDS trained with  $n=$5,000, 10,000 and 60,000 images in the MNIST dataset.
The rows consist of generated images conditioning on the labels from 0 to 9. In the left panel consisting of images with sample size 5,000, we see that the quality of the images is poor. The quality of the images improves as the sample size increases to 10,000 and then 60,000.

%the training is a failure for most cases while in the second panel, it is possible to %recognize some of the digits. The quality of generated images continue to improve %while we increase the sample size.

\subsection{Image reconstruction with  %CIFAR-10 and
STL-10 dataset}
% CIFAR-10 dataset \citep{krizhevsky09} contains 60,000 color images of size $32\times 32$ in 10 classes .

\begin{figure}[H] %!htbp]
	\centering
	\includegraphics[width=5.6in, height=3.6in]{./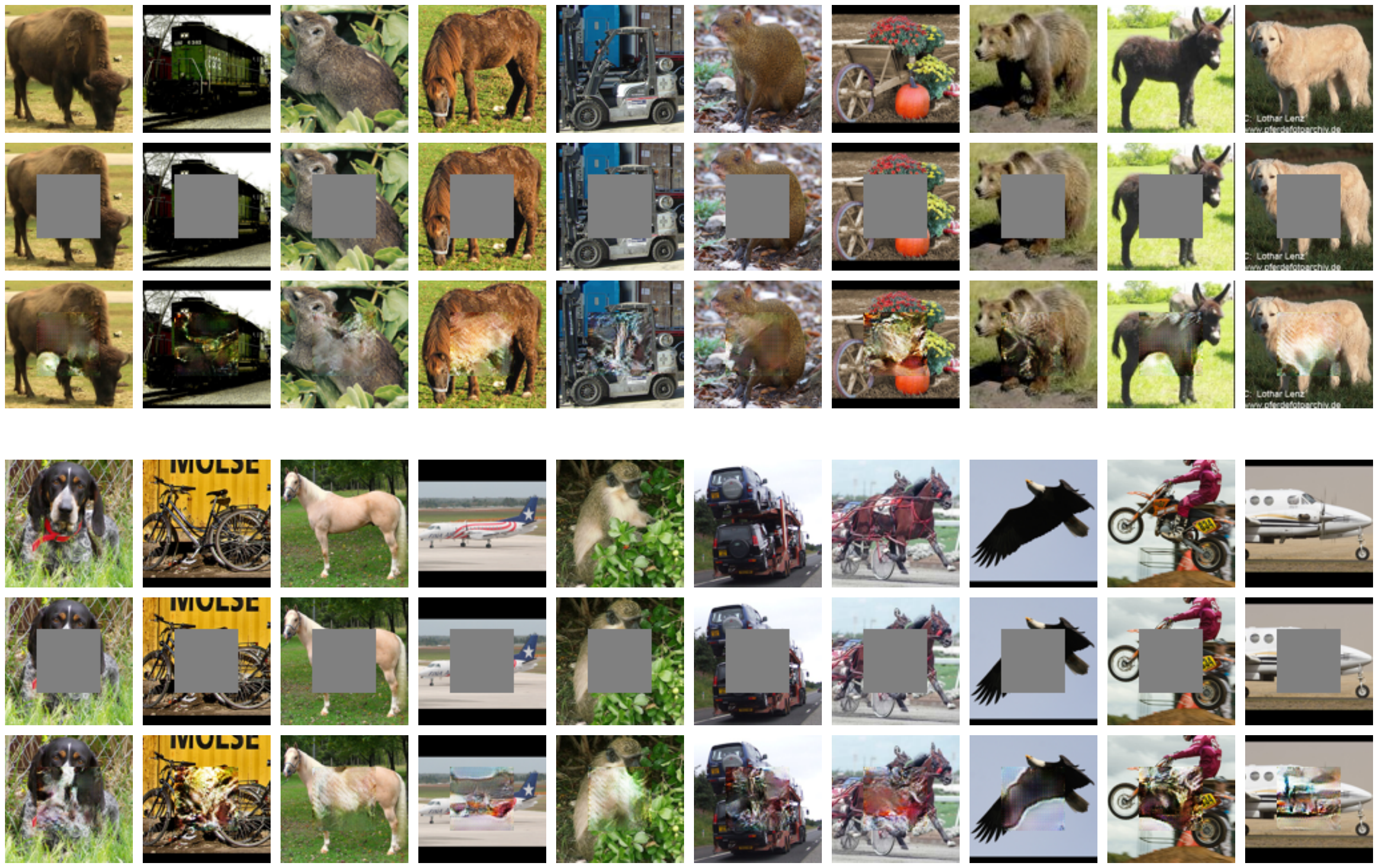}
	\caption{Reconstructed images given partial image in STL10 dataset. In each panel, the first row consists of the true images, the second row consists of images with the corrupted area. The third row contains the reconstructed images.}
	\label{fig:stl10_ximage}
\end{figure}

As a proof of concept, we illustrate GCDS with a different high-dimensional data problem. We use the STL-10 \citep{coates11a}
dataset, which contains  color images of size $96\times 96 \times 3$.
% paired with label information in 10 categories.
We apply GCDS to the image reconstruction problem with the STL-10 dataset,  where the central part of  the image  is deliberately corrupted. The size of the corrupted area is a quarter of the whole image.  The task is to reconstruct the corrupted area.
This is also known as image inpainting.
%Most literature has viewed this problem as a deterministic reconstruction problem.
We formulate this task as a problem  of generating from a high-dimensional conditional distribution. With this formulation, the observed part of the image is the given information while we would like to generate the missing area given the observed part of the image. For the STL-10 dataset, the response $Y$ is the corrupted part of the image, whose dimension is $48 \times 48 \times 3=6,912$. The  predictor $X$ is the observed part of the image, whose dimension is $96\times 96\times 3 -48 \times 48 \times 3=20,736.$
We adopted a neural network with a structure similar to that of \citet{iizuka2017globally}.
%whereas the random noise is added at the middle of the %completion network.
%In Figure \ref{fig:cifar10_ximage}, we can see that there are variation in the generated images. For %example, in the 5th row, the pattern of fur varies.
%\begin{figure}[!htbp]
%	\centering
%	\includegraphics[width=4.8in, height=3.0in]{./realdata/cifar10_ximage_fenchel.pdf}
%	\caption{Reconstructed images given partial image in CIFAR10 dataset. In each panel, the first column %consists of the true images, the second column shows the corrupted area. The other columns contain the %reconstructed images.}
%	\label{fig:cifar10_ximage}
%\end{figure}
In each panel of Figure \ref{fig:stl10_ximage}, the first row consists of the true images, the second row shows the images with corrupted area, and the third row contains the reconstructed images.
We can see that the reconstructed images are of good quality comparing with the original images.
This suggests that GCDS is a promising framework for image reconstruction tasks. However, more work is needed to apply GCDS to more complex image reconstruction problems, for example, if there are multiple corrupted areas in an image. This is beyond the scope of the present work and will be studied in the future.

%, there is less variation as the resolution is higher with more detail. Given these information, there is not %much room left for the corrupted area to vary.

\section{Proofs and additional technical details}
In this section, we give the proofs of the results stated in the paper and present additional details about $f$-divergence.

\subsection{Proof of Lemma \ref{lem2}}
\begin{proof}
	Our proof follows \citet{keziou2003dual}.
	Since $f(t)$ is convex, then for $t \in \mathbb{R}$, we have  $f(t)=f^{**}(t)$, where $$f^{**}(t)=\sup_{s\in\mathbb{R}} \{st - f^*(s)\}$$ is the Fenchel conjugate of $f^*$. By  Fermat's rule, the maximizer  $s^*$  satisfies $$t\in \partial %\mathfrak{F}
f^*(s^*),$$ i.e., $$s^*\in \partial f(t)$$
	Plugging  the above display with $t = \frac{\mathrm{d} \mu_Z}{\mathrm{d} \gamma}(x)$ into the definition of $f$-divergence,  we obtain (\ref{fdual}). This completes the proof.
\end{proof}

\subsection{Proof of Lemma \ref{NOLemma}}
\begin{proof}
First, based on the basic noise-outsourcing lemma (Theorem 5.10 in
\cite{kall2002},  Lemma 3.1 in \cite{austin2015}), there is a uniform random variable
$u \sim  \text{Uniform}\,[0, 1]$ and a measurable function
$G_1: [0, 1] \times \cX \to \cY$ such that
\[
Y= G_1(u, X) \ \text{almost surely}.
%u \sim \text{Uniform}\,[0, 1] \text{ and } u \text{ is independent of } X.
\]
For a random vector $\eta \sim N(\0, \bI_m)$  for some $m \ge 1$, there exists
a  measurable function $G_2: \mathbb{R}^m \to [0, 1]$ such that
\[
u = G_2(\eta) \ \text{ almost surely}.
\]
For example, we can simply take $G_2(\eta) = \Phi^{-1}(\eta_1)$, where $\Phi^{-1}$ is the inverse
of the standard normal distribution function and $\eta_1$ is the first component of $\eta$. It is well-known that $\Phi^{-1}(\eta_1) \sim \text{Uniform}\, [0, 1].$
Combining the above two equations, we have
\[
Y = G_1(G_2(\eta), X) \text{ almost surely}.
\]
So we can simply take $G$ as
\[
G(\eta, x) = G_1(G_2(\eta), x), \ (\eta, x) \in \mathbb{R}^m\times \cX.
\]
Therefore, we have
\[
(X, G(\eta, X) = (X, Y) \ \text{almost surely}.
\]
This completes the proof.
\end{proof}

%for an $\eta \sim N(\0, \bI_m)$ with
%$m \ge 1$, there exists a measurable function $G_1: [0, 1]\to \mathbb{R}^m$ such that
%\[
%\eta = G_1(u), u \sim \text{Uniform}\, [0, 1].
%\]

\subsection{$f$-divergence,  convex dual  and variational form}

%\subsection{$f$-divergence}
First, we note that the definition of $f$-divergence used in this work differs from that in
\cite{nguyen2010estimating}.
%We have included a comment on the difference between the definition of $f$-divergence we used and the definition used by \cite{nguyen2010estimating} in the revision. We have also included a detailed description of the difference in the revised Supplementary Material.
The definition of $f$-divergence we used as defined in (\ref{fdiv}) is
\begin{equation}
\label{fdR}
\Dbb_f(q\Vert p) = \int p(z) f\left(\frac{q(z)}{p(z)}\right) d z.
\end{equation}
The definition in \cite{nguyen2010estimating} is
\begin{equation}
\label{phidR}
\Dbb_{\phi}(q \Vert p) = \int q(z) \phi \left(\frac{p(z)}{q(z)}\right) d z,
\end{equation}
where $\phi: \mathbb{R}\to \mathbb{R}$ is  a convex and lower-semicontinuous function.
Since
\[
\Dbb_f(q\Vert p) = \int p(z) f\left(\frac{q(z)}{p(z)}\right) d z
= \int q(z) \frac{p(z)}{q(z)} f \left(\frac{q(z)}{p(z)}\right) d z,
\]
the two definitions (\ref{fdR}) and (\ref{phidR}) are equivalent if we set $f (x) = x \phi(1/x)$.
For example, for the KL divergence, in definition (\ref{fdR}), we take $f(x)=x \log x$, then
\[
\Dbb_{f}(q\Vert p)  = \int p(z) \frac{q(z)}{p(z)} \log \left( \frac{q(z)}{p(z)}\right) dz
 = \int q(z) \log \left(\frac{q(z)}{p(z)}\right) dz =\Dbb_{\text{KL}}(q\Vert p).
\]
In definition (\ref{phidR}), we take $\phi(x)= \log(1/x)$, which yields the same expression as above, i.e.,
\[
\Dbb_{\phi}(q\Vert p)  = \int q(z) \log \left( \frac{q(z)}{p(z)}\right) dz =\Dbb_{\text{KL}}(q\Vert p).
\]
For these two functions $f(x)=x\log x$ and $\phi(x)=\log(1/x)$, we have
$f(x)=x \phi(1/x)$.

We now give the expressions of the $f$-functions, their dual forms $f^*$ and the variational forms of the KL-, JS- and $\chi^2$-divergences. For detailed derivations, we refer to
\cite{rockafellar1970convex}, \cite{keziou2003dual}, and \cite{nguyen2010estimating}.

\begin{itemize}
\item
KL-divergence:
\begin{eqnarray*}
f(x)&=&x \log x,\
f^*(t)= e^{t-1}, \\
D_{\text{KL}}(q\Vert p)
%= \sup_{g > 0} \{\Ebb_{Z\sim \mu} \log g(Z) - \Ebb_{W\sim \gamma} %g(W)\} + 1
&=& \sup_D \{\Ebb_{Z\sim q} D(Z) -\Ebb_{W\sim p} e^{D(W)-1}\}\\
&=&\sup_D\{\Ebb_{Z\sim q} D(Z) -\Ebb_{W\sim p} e^{D(W)}\}+1.
\end{eqnarray*}

\item JS-divergence:
\begin{eqnarray*}
f(x) &=& -(x+1) \log \frac{1+x}{2}+x \log x,\
f^*(t)=-\log(2-\exp(t)),\\
D_{\text{JS}}(q\Vert p)
&=& \sup_D \{\Ebb_{Z\sim q} D(Z) +\Ebb_{W\sim p} \log(2-\exp(D(W)))\}\\
&=&\sup_D \{\Ebb_{Z\sim q} \log D(Z) +\Ebb_{W\sim p} \log(1-D(W))\}
+ \log 4.
\end{eqnarray*}
where the last equality is obtained by change of variable:  $D \to \log(2D)$. This is the form (without the constant $\log 4$) used in the GAN objective function \citep{goodfellow14}.
%, we get the GAN objective function,
%\[
%D_{\text{JS}}(\mu, \gamma) = \sup_D \{\Ebb_{Z\sim \mu} \log %D(Z) +\Ebb_{W\sim \gamma} \log(1-D(W))\} + \log 4.
%\]

\item $\chi^2$-divergence:
\begin{eqnarray*}
f(x)&=&(x-1)^2,\
f^*(t)= t + \frac{t^2}{4},  \\
D_{\chi^2}(q\Vert p)
%= \sup_{g > 0} \{\Ebb_{Z\sim \mu} \log g(Z) - \Ebb_{W\sim \gamma} %g(W)\} + 1
&=& \sup_D \{\Ebb_{Z\sim q} D(Z) -\Ebb_{W\sim p} [D(W)+\frac{D^2(W)}{4}]\}\\
&=&\sup_D\{2\Ebb_{Z\sim q} D(Z)-\Ebb_{W\sim p} D^2(W)\}-1,
\end{eqnarray*}
where the last equality is obtained by change of variable $D \to 2 (D-1)$.
\end{itemize}

\subsection{Proof of Theorem \ref{consistency}}

For ease of reference, we first restate the notation and Theorem \ref{consistency} before giving the proof.

Recall ${\cL}(G,D)$ is defined as
\begin{eqnarray}\label{lossps}
\cL(G,D) = %&=&
\Ebb_{(X, \eta)\sim P_X P_{\eta}} D(X, G(\eta,X))
%\nonumber \\
%& &
- \Ebb_{(X, Y) \sim P_{X,Y}} \exp(D(X,Y)).
\end{eqnarray}
For any measurable function $G: \real^m \times \real^d \mapsto \real^q$,  define
\begin{equation}\label{lossklps}
\mathbb{L}(G) %=  \mathbb{D}(p_{X, G(\eta,X)},p_{X,Y})
=  \sup_{D} \cL(G,D).
 %\sup_D\{\Ebb_{(X, \eta)\sim P_X  P_{\eta}} D(X, G(\eta,X)) - \Ebb_{(X, Y) \sim P_{X,Y}} \exp(D(X,Y))\}
\end{equation}
Let
\begin{equation}
\label{dualobjs}
\widehat{\cL}(G, D) =
%\frac{1}{nk}\sum_{i=1}^n\sum_{j=1}^k D(X_i, G(\eta_j, X_i))
\frac{1}{n}\sum_{i=1}^n D(X_i, G(\eta_i, X_i))
- \frac{1}{n}\sum_{i=1}^n \exp(D(X_i, Y_i)).
\end{equation}
For a fixed $G$, let $p_{XG}$ be the joint density of $(X, G(\eta, X))$. Lemma \ref{lem2} implies that the optimal $D$ is
\[
D^*(z) = \log \frac{p_{XG}(z)}{p_{XY}(z)}=\log r(z).
\]
Thus the optimal discriminator is the log-likelihood ratio serving as a critic of the resemblance between $p_{XY}$ and $p_{XG}$. Substituting this expression into (\ref{lossklps}), we have,
\[
\Lbb(G) = \Ebb_{(X, \eta) \sim P_XP_{\eta}} [\log r(X, G(\eta, X))].
\]
Therefore, the optimal $G^*$ minimizing  the KL-divergence satisfies $P_{(X, G^*(X, \eta))} = P_{X, Y}$ by Lemma \ref{distm}.

We make the following assumptions.

\begin{itemize}
\item[(A1)]
%Assume that $p_{X,Y}$ is supported on $[-B,B]^{d+q}$ and %$p_{\eta}$ is supported on $[-B,B]^{m}$ for some $B>0$. The %generator
% $G^*: [-B,B]^{m} \times [-B,B]^{d}  \to [-B,B]^q$ is %Lipschitz continuous with the Lipschitz constant $L_1$.

The target conditional generator $G^*: \real^{m} \times \cX  \to \cY$ is continuous with $\|G^*\|_{\infty} \le C_0$ for some constant $0 < C_0 < \infty.$

%Lipschitz continuous with the Lipschitz constant $L_1$.

\item[(A2)]
%For any %finite number $G \in \cG$,
  %_{\mathcal{D}, \mathcal{W}, \mathcal{S}, \mathcal{B}}$,
%we assume   $r_G(z) = {p_{X,G(\eta,X)}(z)}/{p_{X,Y}(z)}$ is  %Lipschitz continuous on $[-B,B]^{d+q}$  with Lipschitz %constant $L_2$, and $0<c_1\leq r(z)\leq c_2$ for some %constants $0 < c_1 \le c_2 < \infty.$

For any  $G \in \cG\equiv \cG_{\mathcal{D}, \mathcal{W}, \mathcal{S}, \mathcal{B}}$,
$r_G(z) = {p_{X,G(\eta,X)}(z)}/{p_{X,Y}(z)}: \cX\times \cY \to \real$ is continuous and
$0<C_1\leq r_G(z)\leq C_2$ for some constants $0 < C_1 \le C_2 < \infty.$

%Lipschitz continuous with Lipschitz constant $L_2$, and %$0<c_1\leq r(z)\leq c_2$ for some constants $0 < c_1 \le c_2 < %\infty.$
\end{itemize}

%\begin{itemize}
%\item[(A1)]  Assume that
%$p_{X,Y}$ is supported on $[-B,B]^{d+q}$ and $p_{\eta}$ is supported on $[-B,B]^{m}$ for some $B>0$. The generator
% $G^*: [-B,B]^{m} \times [-B,B]^{d}  \to [-B,B]^q$ is Lipschitz continuous with the Lipschitz constant $L_1$.
%
%\item[(A2)] For any %finite number
%$G \in \cG$,
%  %_{\mathcal{D}, \mathcal{W}, \mathcal{S}, \mathcal{B}}$,
%we assume   $r_G(z) = {p_{X,G(\eta,X)}(z)}/{p_{X,Y}(z)}$ is  Lipschitz continuous on $[-B,B]^{d+q}$ with Lipschitz constant $L_2$, and $0<c_1\leq r(z)\leq c_2$ for some constants $0 < c_1 \le c_2 < \infty.$
%\end{itemize}

We also make the following assumptions on the network parameters
of the conditional generator $G_{\vtheta}$  and the discriminator  $D_{\vphi}$.

\begin{itemize}
\item[(N1)] The network parameters of $\cG$ satisfies
%the specifications:
	\begin{equation*}
		\mathcal{H}\mathcal{W}\to\infty \quad {\rm and} \quad \frac{\mathcal{B}\mathcal{S}\cH\log(\mathcal{S})\log n}{n}\to 0,
\text{ as } n \to \infty.
	\end{equation*}
%	as $n\to\infty$.

\item[(N2)]
The network parameters of $\cD$ satisfies
% the specifications:
	\begin{equation*}
		\tilde{\mathcal{H}}\tilde{\mathcal{W}}\to\infty \quad {\rm and} \quad \frac{\tilde{\mathcal{B}}\tilde{\cS}\tilde{\cH}
\log(\tilde{\cS})\log n}{n}\to 0, \text{ as } n \to \infty.
	\end{equation*}
%	as $n\to\infty$.
\end{itemize}

%We set the network parameters of   $G_{\vtheta}$  and the discriminator  $D_{\vphi}$ as follows.
%\begin{itemize}
%\item[(N1)]
%The network parameters of $\cG$ satisfies the specifications:
%  depth $\mathcal{H} = 9 \log n + 12,$ width $\mathcal{W}=\max\{(n^{\frac{d+m}{d+m+2}}/\log n)^{\frac{1}{d+m}}+4d,12n^{\frac{d+m}{2+d+m}}/\log n+14\},$  $\mathcal{B} = 4BL_1\sqrt{d+m}+B$, size $\mathcal{S} =n^{\frac{d+m-2}{d+m+2}}/\log^4 n,$
%\item[(N2)] The network parameters of $\cD$ satisfies the specifications: depth
%    $\tilde{\mathcal{H}} = 9 \log n + 12,$ width
%      $\tilde{\mathcal{W}}=\max\{(n^{\frac{d+q}{d+q+2}}/\log n)^{\frac{1}{d+q}}+4d,12n^{\frac{d+q}{d+q+2}}/\log n+14\},$  $\tilde{\mathcal{B}} = 4BL_2\sqrt{d+q}+c_3$, and size
% $\tilde{\mathcal{S}} =n^{\frac{d+q-2}{d+q+2}}/\log^4 n.$
% %$\tilde{\mathcal{W}}=\max\{(n^{\frac{d+1}{d+3}}/\log %n)^{\frac{1}{d+1}}+4d,12n^{\frac{d+1}{d+3}}/\log %n+14\},$  $\tilde{\mathcal{B}} = 4BL_2\sqrt{d+1}+c_3$, %and size
% %$\tilde{\mathcal{S}} =n^{\frac{d-1}{d+3}}/\log^4 n.$
%\end{itemize}

\medskip\noindent
%\begin{theorem}\label{consistencys}
\textbf{Theorem \ref{consistency}}
\textit{
Suppose that the assumptions (A1) and (A2) hold. If the network parameters of $\cG$ and $\cD$ satisfies the specifications (N1) and (N2), then
\[
\Ebb_{(X_i,Y_i,\eta_i,)_{i=1}^n}  \|p_{X, \hat{G}_{\vtheta}(\eta, X)}-p_{X, Y}\|_{L_1}^2 \rightarrow 0,
\ \text{ as } n \to \infty.
\]
}
 %\end{theorem}

For simplicity, we only prove Theorem \ref{consistency}
for $q=1$.
%The proof for the case with $q > 1$ is similar, and hence is omitted.
By a truncation argument, we only need to  consider on domains  $E_1 = [-B,B]^{d+m}\in \mathbb{R}^{d}\times{R}^{m}$ and  $E_2 = [-B,B]^{d+q}\in \mathbb{R}^{d}\times{R}^{q}$ with $B = \log n$.
Let   $C_3 = \max\{|\log(C_1)|,|\log(C_2)|\}+1$ and
 $C_4 = \max_{|s|\leq  2 C_3}  \exp(s)+2C_3=\exp(2C_3)+2C_3$.
Define the empirical loss function at the sample level by
 \begin{equation}\label{losskls}
\hat{\mathbb{L}}(G) = \hat{\mathbb{D}}(p_{X, G(\eta,X)},p_{X,Y}) =  \sup_{D_{\vphi}\in \cD} \widehat{\cL}(G,D_{\vphi}),
\end{equation}
where $\widehat{\cL}(G,D)$ is  defined in  \eqref{dualobj}.
 Then,
 \[
 \hat{G}_{\vtheta} \in  \arg \min_{G_{\vtheta}\in \cG} %\cG_{\mathcal{H}, \mathcal{W}, \mathcal{S}, %\mathcal{B}}}  \hat{\mathbb{L}}(G_{\vtheta})=  \arg %\min_{G_{\vtheta}\in \cG_{\mathcal{H}, \mathcal{W}, %\mathcal{S}, \mathcal{B}}}
  \widehat{\cL}(G_{\vtheta},\hat{D}_{\vphi}),
 \]
 and
\[
\hat{D}_{\vphi} \in  \arg \max_{{D}_{\vphi}\in \cD} %\cD_{\tilde{\mathcal{H}}, \tilde{\mathcal{W}}, %\tilde{\mathcal{S}}, \tilde{\mathcal{B}}}}
\widehat{\cL}(\hat{G}_{\vtheta}, D_{\vphi}).
\]
To shorten the notation, we use $\hat{G}$, $\hat{D}$, $\mathcal{G}$, and $\mathcal{D}$ to denote $\hat{G}_{\vtheta}$, $\hat{D}_{\vphi}$, $\mathcal{G}_{\mathcal{H}, \mathcal{W}, \mathcal{S}, \mathcal{B}}$, and $\mathcal{D}_{\tilde{\mathcal{H}}, \tilde{\mathcal{W}}, \tilde{\mathcal{S}}, \tilde{\mathcal{B}}}$, respectively.

%\begin{proof}
We first give a high-level description of the proof. By Lemma \ref{KLM}, $\mathbb{L}(G^*) = 0.$
Then, by Pinsker's inequality \citep{npe2008}, we have
\begin{equation}\label{errl1s}
 \|p_{X, \hat{G}(\eta, X)}-p_{X, Y})\|_{L^1}^2 \leq 2 (\mathbb{L}(\hat{G}) - \mathbb{L}(G^*)).
 \end{equation}
Next we show that the right side in \eqref{errl1s} goes to $0$.

For any	$\bar{G}\in \cG$, we decompose the right side in \eqref{errl1s} as follows:
\begin{eqnarray}
\mathbb{L}(\hat{G})-\mathbb{L}(G^{*}) %\nonumber\\
	%& =\mathbb{L}(\hat{G}_{\vtheta}) - \hat{\mathbb{L}}(\hat{G}_{\vtheta}) +    \hat{\mathbb{L}}(\hat{G}_{\vtheta})-  \hat{\mathbb{L}}(\bar{G}) +   \hat{\mathbb{L}}(\bar{G}) - \mathbb{L}(\bar{G}) +\mathbb{L}(\bar{G})  - \mathbb{L}(G^*)\nonumber\\
%	&\leq  2 \sup_{G \in \cG_{\mathcal{H}, \mathcal{W}, \mathcal{S}, \mathcal{B}}} |\mathbb{L}(G) - \hat{\mathbb{L}}(G) |+ \inf_{\bar{G} \in \cG_{\mathcal{H}, \mathcal{W}, \mathcal{S}, \mathcal{B}}} |\mathbb{L}(\bar{G})  - \mathbb{L}(G^*)| , \label{eppl}
& = &\sup_{D} \cL(\hat{G},D)  -  \sup_{D \in \cD} \cL(\hat{G},D) \nonumber\\
& &+   \sup_{D \in \cD} \cL(\hat{G},D) - \sup_{D \in \cD} \widehat{\cL}(\hat{G},D) \label{ep1s}\\
& &+ \sup_{D \in \cD} \widehat{\cL}(\hat{G},D) - \sup_{D \in \cD} \widehat{\cL}(\bar{G},D) \label{neg1s}\\
& &+ \sup_{D \in \cD} \widehat{\cL}(\bar{G},D) - \sup_{D \in \cD} \cL(\bar{G},D) \label{ep2s}\\
& &+\sup_{D \in \cD} \cL(\bar{G},D) - \sup_{D} \cL(\bar{G},D)\label{neg2s}\\
& &+ \sup_{D} \cL(\bar{G},D)-\sup_{D} \cL(G^*,D).\nonumber
%\\
%&\leq & \sup_{D} \cL(\hat{G},D)  -  \sup_{D \in \cD} %\cL(\hat{G},D) + 2 \sup_{D\in \cD, G\in %\cG}|\cL(G,D)-\widehat{\cL}(G,D)|\nonumber\\
%& &+ \sup_{D} \cL(\bar{G},D)-\sup_{D} \cL(G^*,D)\nonumber
\end{eqnarray}
%	where the  inequality follows from  the fact that both
Since the terms in \eqref{neg1s} and \eqref{neg2s} are nonpositive,
%(use the fact  $\hat{G}$ is a minimizer   and the feasibility of $\bar{G}$),
and the terms in \eqref{ep1s} and  \eqref{ep2s}  are smaller than
$$\sup_{D\in \cD, G\in \cG}|\cL(G,D)-\widehat{\cL}(G,D)|,$$
we have
\begin{eqnarray*}
\mathbb{L}(\hat{G})-\mathbb{L}(G^{*})
&\leq & \sup_{D} \cL(\hat{G},D)  -  \sup_{D \in \cD} \cL(\hat{G},D) + 2 \sup_{D\in \cD, G\in \cG}|\cL(G,D)-\widehat{\cL}(G,D)|\nonumber\\
& &+ \sup_{D} \cL(\bar{G},D)-\sup_{D} \cL(G^*,D).\nonumber
\end{eqnarray*}
Taking infimum on $\bar{D}$ over $\cD$ on both side of the above display we get
\begin{equation}\label{errdecs}
\mathbb{L}(\hat{G})-\mathbb{L}(G^{*}) \le
\Delta_1 + \Delta_2 + \Delta_3,
\end{equation}
where
\begin{eqnarray*}
\Delta_1&=&\sup_{D} \cL(\hat{G},D)  -  \sup_{D \in \cD} \cL(\hat{G},D),\\
\Delta_2&=&
2 \sup_{G \in \cG, D\in \cD}|\cL(G,D)-\hat{L}(G,D)|,\\
\Delta_3&=&\inf_{\bar{G}\in \cG} [\mathbb{L}(\bar{G})-\mathbb{L}(G^*)].
\end{eqnarray*}
The first and the third terms $\Delta_1$ and $\Delta_3$
are the approximation errors and
the second term $\Delta_2$ is the statistical error.
To prove the theorem, it suffices to show that these error terms converge to zero.

We first show the following Lemmas \ref{app1} - \ref{app2} to bound these two types of errors  in \eqref{errdecs}.

\begin{lemma}\label{app1}
$\Delta_3\equiv \inf_{\bar{G}\in \cG} [\mathbb{L}(\bar{G})-\mathbb{L}(G^*)] = o(1)$, as $n\rightarrow \infty.$
\end{lemma}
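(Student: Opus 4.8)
The goal is to show that the approximation error $\Delta_3=\inf_{\bar G\in\cG}[\Lbb(\bar G)-\Lbb(G^*)]$ vanishes as $n\to\infty$. Since $\Lbb(G^*)=0$ and $\Lbb(\bar G)=\Ebb_{(X,\eta)}[\log r_{\bar G}(X,\bar G(\eta,X))]$ where $r_{\bar G}=p_{X,\bar G(\eta,X)}/p_{X,Y}$, it suffices to produce, for each $n$, a network $\bar G\in\cG_{\cH,\cW,\cS,\cB}$ whose induced joint density $p_{X,\bar G(\eta,X)}$ is close to $p_{X,Y}$ in a strong enough sense that $\Lbb(\bar G)\to 0$. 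The plan is: (i) use the universal approximation capability of ReLU networks for continuous functions on compact sets to approximate $G^*$ uniformly by a network $\bar G$; (ii) transfer closeness of $\bar G$ to $G^*$ in sup-norm into closeness of the corresponding joint densities; (iii) bound $\Lbb(\bar G)$ in terms of that density discrepancy.

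For step (i), I would invoke the quantitative approximation results for deep ReLU networks (e.g.\ Shen et al.\ 2019, as cited) applied to the continuous, bounded target $G^*$ guaranteed by (A1). On the truncated domain $E_1=[-B,B]^{d+m}$ with $B=\log n$, these results give a network $\bar G\in\cG$ with $\|\bar G-G^*\|_{\infty}\to 0$; crucially the network-parameter growth conditions in (N1), namely $\cH\cW\to\infty$ together with the size/depth constraints, are exactly what is needed to make the approximation error tend to zero while keeping $\|\bar G\|_\infty\le\cB$ and $\bar G\in\cG$. I would also note that $G^*$ extended/mollified appropriately stays continuous and bounded by $C_0$, so the truncation to $E_1$ costs only a vanishing amount because $P_\eta$ and $P_X$ put vanishing mass outside $[-B,B]$ as $B=\log n\to\infty$.

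For steps (ii)–(iii), the key observation is that $\Lbb(\bar G)=\Dbb_{\KL}(p_{X,\bar G(\eta,X)}\Vert p_{X,Y})$ by the derivation preceding the theorem, and by (A2) the density ratio $r_{\bar G}$ is continuous and bounded between $C_1$ and $C_2$, so $|\log r_{\bar G}|\le C_3-1$ is uniformly bounded on $\cG$. Then $\Lbb(\bar G)=\Ebb[\log r_{\bar G}(X,\bar G(\eta,X))]$ can be controlled by first writing $\Dbb_{\KL}(p_{X,\bar G(\eta,X)}\Vert p_{X,Y})\le \frac{1}{C_1}\|p_{X,\bar G(\eta,X)}-p_{X,Y}\|_{L_1}$ (using $\log t\le (t-1)$ and the lower bound on the reference density, or equivalently a reverse-Pinsker-type bound valid because the ratio is bounded). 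It then remains to show $\|p_{X,\bar G(\eta,X)}-p_{X,Y}\|_{L_1}\to 0$ as $\|\bar G-G^*\|_\infty\to 0$: since $(X,G^*(\eta,X))\sim(X,Y)$, this is continuity of the pushforward map $G\mapsto p_{X,G(\eta,X)}$ in $L_1$ with respect to uniform convergence of $G$; I would establish it via a change-of-variables / mollification argument, or more simply by noting weak convergence $(X,\bar G(\eta,X))\Rightarrow(X,Y)$ follows immediately from $\bar G(\eta,X)\to G^*(\eta,X)$ a.s., and then upgrading weak convergence to $L_1$ convergence of densities using the uniform boundedness and continuity of the densities/ratios assumed in (A2) together with Scheffé's lemma.

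The main obstacle is step (ii): converting a sup-norm approximation of the generator $\bar G\approx G^*$ into an $L_1$ (or KL) closeness of the \emph{induced joint densities}, because the map from a generator to its pushforward density is not Lipschitz in general and can behave badly when the generator is nearly singular. This is precisely where assumption (A2) does the heavy lifting—by forcing $p_{X,G(\eta,X)}$ to have a density bounded above and below relative to $p_{X,Y}$ for every $G\in\cG$, the degeneracy is ruled out and Scheffé's lemma (combined with a.s.\ convergence of the generated variables and the continuity in (A2)) closes the argument. I would therefore present the proof as: construct $\bar G$ by the ReLU approximation theorem under (N1); deduce $\bar G(\eta,X)\to G^*(\eta,X)$ a.s.; conclude $p_{X,\bar G(\eta,X)}\to p_{X,Y}$ in $L_1$ by Scheffé via (A2); and finally bound $\Lbb(\bar G)\le C_3\|p_{X,\bar G(\eta,X)}-p_{X,Y}\|_{L_1}\to 0$, which gives $\Delta_3=o(1)$.
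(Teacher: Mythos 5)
Your proposal follows essentially the same route as the paper: invoke the ReLU approximation theorem (Shen et al.) under (A1) and (N1) to build $\bar G\in\cG$ with $\|\bar G-G^*\|_{L^\infty(E_1)}\to 0$, then use the boundedness and continuity of the density ratios in (A2) to conclude that the induced joint density of $(X,\bar G(\eta,X))$ approaches $p_{X,Y}$, hence $\Lbb(\bar G)-\Lbb(G^*)=\Dbb_{\KL}(p_{X,\bar G(\eta,X)}\Vert p_{X,Y})\to 0$. The only difference is cosmetic: the paper asserts sup-norm convergence of the log-density ratios directly from continuity, while you route the same continuity assertion through Scheff\'e's lemma and a reverse-Pinsker bound $\Dbb_{\KL}\le C\|\cdot\|_{L_1}$ — both arguments rest on the same (A2)-based continuity of the generator-to-pushforward-density map.
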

\begin{proof}
By the assumption,  $G^*$ is   continuous on $E_1 = [-B,B]^{d+m}$ with $B = \log n$,   and $\|G^*\|_{L^{\infty}} \leq C_0$.
Setting $L = \log n$, $N = n^{\frac{d+m}{2(2+d+m)}}/\log n$, $E = E_1$  and $R=B$,  in  Lemma \ref{appshen},  we get
%Lemma \ref{appshen} stated below,
an  ReLU network $\bar{G}_{\bar{\vtheta}} \in \cG$
%with depth $\mathcal{H} = 9 \log n + 12,$ width $\mathcal{W}=\max\{(n^{\frac{d+m}{2+d+m}}/\log n)^{\frac{1}{d+m}}+4d,12n^{\frac{d+m}{2+d+m}}/\log n+14\},$  $\mathcal{B} = 4BL_1\sqrt{d+m}+B$, size $\mathcal{S} =n^{\frac{d+m-2}{d+m+2}}/\log^4 n,$ such that
%	$$\|\bar{G}_{\bar{\vtheta}}\|_{L^{\infty}} =4BL_1\sqrt{d+m}+B,$$ and
 with depth $\mathcal{H} = 12 \log n + 14+2(d+m),$ width $\mathcal{W} = 3^{d+m+3}\max\{(d+m)(n^{\frac{d+m}{2(2+d+m)}}/\log n)^{\frac{1}{d+m}},n^{\frac{d+m}{2(2+d+m)}}/\log n+1\},$ and size $\mathcal{S} =n^{\frac{d+m-2}{d+m+2}}/(\log^4 n),$ $\mathcal{B} = 2C_0$ such that
		\begin{equation}\label{appg}
\|G^*-\bar{G}_{\bar{\vtheta}}\|_{L^{\infty}(E_1)} \leq  19 \sqrt{d+m} \omega_{f}^{E_1}
\left(2 (\log n ) n^{\frac{-1}{2+d+m}} \right).
\end{equation}
%and the Lebesgue measure of $\mathcal{H}$ satisfies
%		$$ \mathfrak{L}(\mathcal{H}) \leq \frac{80 L_1B\sqrt{d+m} n^{-\frac{2}{d+m+2}}}{4BL_1\sqrt{d+m}+B}.$$
Let $\bar{D} = \log \frac{p_{X, \bar{G}_{\bar{\vtheta}}(\eta, X)}(z)}{p_{X,Y}(z)}$ and
  ${D}^* = \log \frac{p_{X, {G}^*(\eta, X)}(z)}{p_{X,Y}(z)}.$ Then,
  the above display on $\|G^*-\bar{G}\|$ in (\ref{appg}) and the continuity  implies that
  $\|{D}^* - \bar{D}\| \rightarrow 0 $ as $n\rightarrow \infty$.
   Therefore, by Lemma \ref{lem2},  we have  $$ \mathbb{L}(\bar{G})=\sup_{D} \cL(\bar{G},D) = \Ebb_{(X, \eta)\sim P_X P_{\eta}} \bar{D}(X, \bar{G}(\eta,X)) - \Ebb_{(X, Y) \sim P_{X,Y}} \exp(\bar{D}(X,Y))$$ converge to
 $$ \mathbb{L}(G^*)=\sup_{D} \cL(G^*,D) = \Ebb_{(X, \eta)\sim P_X P_{\eta}} {D}^*(X, {G}^*(\eta,X)) - \Ebb_{(X, Y) \sim P_{X,Y}} \exp({D}^*(X,Y))$$
 as $n \rightarrow \infty$.
\end{proof}

 \begin{lemma}\label{errsta}
 \begin{equation}\label{sta}
\Delta_2\equiv \sup_{D\in \cD, G\in \cG}|\cL(G,D)-\widehat{\cL}(G,D)| \leq \mathcal{O} (n^{-\frac{2}{2+d+m}}+n^{-\frac{2}{2+d+q}}).
\end{equation}
\end{lemma}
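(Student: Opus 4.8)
The plan is to treat $\Delta_2$ as the supremum of an empirical process indexed by two coupled ReLU network classes and to control it by symmetrization together with the VC/pseudo-dimension bounds for neural networks in \citep{bartlett2019}. First I would expand $\cL$ and $\widehat{\cL}$ and use the triangle inequality to get
\[
\Delta_2 \;\le\; \sup_{f\in\mathcal{F}_1}\Big|\frac1n\sum_{i=1}^n f(X_i,\eta_i) - \Ebb f(X,\eta)\Big| \;+\; \sup_{g\in\mathcal{F}_2}\Big|\frac1n\sum_{i=1}^n g(X_i,Y_i) - \Ebb g(X,Y)\Big|,
\]
where $\mathcal{F}_1 = \{(x,\eta)\mapsto D(x,G(\eta,x)) : G\in\cG,\ D\in\cD\}$ and $\mathcal{F}_2 = \{(x,y)\mapsto \exp(D(x,y)) : D\in\cD\}$. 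On the truncated domains $E_1,E_2$ ($B=\log n$), and after clipping the discriminator at level $2C_3$ — harmless since the relevant discriminators can be taken with sup-norm at most $2C_3$, the population optimizer $D^\ast=\log r_G$ having $|D^\ast|\le C_3$ by (A2) — every element of $\mathcal{F}_1$ is bounded by $2C_3$ and every element of $\mathcal{F}_2$ by $C_4$.

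Next, for $j=1,2$ I would apply the standard symmetrization inequality to pass to Rademacher complexity, $\Ebb\sup_{h\in\mathcal{F}_j}|\cdots|\le 2\,\Ebb\,\mathfrak{R}_n(\mathcal{F}_j)$, and bound $\mathfrak{R}_n(\mathcal{F}_j)$ via Dudley's entropy integral; for a class uniformly bounded by $M$ with pseudo-dimension $d_P$ the empirical covering number satisfies $\log\mathcal{N}(\epsilon,\mathcal{F}_j,n)\lesssim d_P\log(Mn/\epsilon)$, whence $\Ebb\,\mathfrak{R}_n(\mathcal{F}_j)\lesssim M\sqrt{d_P\log n/n}$. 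The key structural step is to identify these pseudo-dimensions. A map $(x,\eta)\mapsto D(x,G(\eta,x))$ is itself computed by a single ReLU network: one first builds $(x,G(\eta,x))$ from $(x,\eta)$ by running the generator sub-network in parallel with identity channels that carry $x$ forward, then feeds the output into the discriminator sub-network; the composite network has depth $\cH+\tilde{\cH}+O(1)$, width $O(\cW+\tilde{\cW}+d)$ and size $O(\cS+\tilde{\cS})$, so by the ReLU pseudo-dimension bound $d_P(\mathcal{F}_1)=O\big((\cS+\tilde{\cS})(\cH+\tilde{\cH})\log(\cS+\tilde{\cS})\big)$. For $\mathcal{F}_2$, since $\exp$ is Lipschitz with constant $\exp(2C_3)$ on the clipped range, the contraction inequality reduces $\mathfrak{R}_n(\mathcal{F}_2)$ to a constant times $\mathfrak{R}_n(\cD)$, and $d_P(\cD)=O(\tilde{\cS}\tilde{\cH}\log\tilde{\cS})$.

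Finally I would insert the network specifications implied by (N1)--(N2) — $\cH,\tilde{\cH}=O(\log n)$, $\cS\asymp n^{(d+m-2)/(d+m+2)}/\log^4 n$, $\tilde{\cS}\asymp n^{(d+q-2)/(d+q+2)}/\log^4 n$ — so that the $\mathcal{F}_1$ contribution is of order $\sqrt{(\cS+\tilde{\cS})(\cH+\tilde{\cH})\log(\cS+\tilde{\cS})\log n/n}=\mathcal{O}\big(n^{-2/(2+d+m)}+n^{-2/(2+d+q)}\big)$ (the logarithmic factors being kept under control by the $\log^4 n$ in the size), and the $\mathcal{F}_2$ contribution is $\sqrt{\tilde{\cS}\tilde{\cH}\log(\tilde{\cS})\log n/n}=\mathcal{O}(n^{-2/(2+d+q)})$. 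Adding the two gives $\Ebb\,\Delta_2=\mathcal{O}(n^{-2/(2+d+m)}+n^{-2/(2+d+q)})$; a bounded-differences (McDiarmid) concentration step turns this into the same bound with high probability, as asserted.

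I expect the principal obstacle to be the complexity estimate for the \emph{composed} class $\mathcal{F}_1$: because the generator and the discriminator are entangled through composition, one cannot simply union-bound the two network classes separately, and one must verify carefully that $D\circ(\mathrm{id},G)$ is exactly representable as one ReLU network of the stated depth, width and size — in particular handling the identity channels that route $x$ around the generator, and observing that the composition stays uniformly bounded (by $2C_3$) precisely because $D$ has been clipped, even though $\|G\|_\infty$ is allowed to grow with $n$. The remaining ingredients — symmetrization, Dudley's integral, converting pseudo-dimension to metric entropy, and the arithmetic with the network sizes — are routine.
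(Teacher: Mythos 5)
Your proof is correct and reaches the paper's rate, but it routes the complexity estimate differently. The paper keeps the two terms together as a single empirical process indexed by $b(G,D;S)=D(X,G(\eta,X))-\exp(D(X,Y))$ over the product class $\cG\times\cD$, symmetrizes, and bounds a single-scale covering number of that product class by $\log\mathfrak{C}(\cG,\cdot)+\log\mathfrak{C}(\cD,\cdot)$, finishing with a maximal inequality for finitely many Rademacher averages rather than Dudley's integral. You instead split the process into the $D(X,G(\eta,X))$ part and the $\exp(D(X,Y))$ part, treat the second by contraction through the Lipschitz map $\exp$ on the clipped range, and treat the first by realizing $D\circ(\mathrm{id},G)$ as one ReLU network and quoting the pseudo-dimension bound for the composite architecture. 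Your composition argument is in fact the more careful of the two at the one place where the paper is thin: passing from separate covers of $\cG$ and $\cD$ to a cover of the composed functions $D(x,G(\eta,x))$ requires a Lipschitz property of $D$ in its second argument, uniform over $\cD$, which the paper uses implicitly but never verifies; encoding the composition as a single network sidesteps this. The price is the cross terms $\cS\tilde{\cH}+\tilde{\cS}\cH$ in your pseudo-dimension bound, which are harmless here only because both depths are $O(\log n)$ — worth one sentence. Two minor points: the lemma is really a bound on the expectation of $\Delta_2$ (which is all Theorem 4.1 needs and all that symmetrization delivers), so your concluding McDiarmid step is optional; and your clipping of $D$ at level $2C_3$ is already built into the class $\cD$ through the constraint $\|D_{\vphi}\|_{\infty}\le\tilde{\mathcal{B}}$ with $\tilde{\mathcal{B}}=2C_3$, which is exactly what gives $\|b\|_{\infty}\le C_4$ in the paper's version.
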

\begin{proof}
For any  $G \in \cG$,
  %_{\mathcal{H}, \mathcal{W}, \mathcal{S}, \mathcal{B}}$,
  by assumption,  $D_{G}(z) = \log r_G(z) = \log (\frac{p_{X,G(\eta,X)}(z)}{p_{X,Y}(z)})$ is   continuous on $E_2 = [-B,B]^{d+q}$ with $B = \log n$ and $\|D_{G}\|_{L^{\infty}} \leq C_3$.
%  Then, by Lemma   \ref{appshen},
Setting $L = \log n$, $N = n^{\frac{d+q}{2(2+d+q)}}/\log n$, $E = E_2$  and $R=B$,  in  Lemma \ref{appshen},  we get
%Lemma \ref{appshen} stated below,
an  ReLU network $\bar{D}_{\bar{\vphi}} \in \cD$
%with depth $\mathcal{H} = 9 \log n + 12,$ width $\mathcal{W}=\max\{(n^{\frac{d+m}{2+d+m}}/\log n)^{\frac{1}{d+m}}+4d,12n^{\frac{d+m}{2+d+m}}/\log n+14\},$  $\mathcal{B} = 4BL_1\sqrt{d+m}+B$, size $\mathcal{S} =n^{\frac{d+m-2}{d+m+2}}/\log^4 n,$ such that
%	$$\|\bar{G}_{\bar{\vtheta}}\|_{L^{\infty}} =4BL_1\sqrt{d+m}+B,$$ and
 with depth $\tilde{\mathcal{H}} = 12 \log n + 14+2(d+q),$ width $\tilde{\mathcal{W}} = 3^{d+q+3}\max\{(d+q)(n^{\frac{d+q}{2(2+d+q)}}/\log n)^{\frac{1}{d+q}},n^{\frac{d+q}{2(2+d+q)}}/\log n+1\},$ and size $\tilde{\mathcal{S}} =n^{\frac{d+q-2}{d+q+2}}/(\log^4 n),$ $\tilde{\mathcal{B}} = 2C_3$ such that
	\begin{equation}\label{appd}
\|D_G-\bar{D}_{\bar{\vphi}}\|_{L^{\infty}(E_2)} \leq 19 \sqrt{d+m} \omega_{f}^{E_2}
(2(\log n) n^{\frac{-1}{2+d+q}}).
\end{equation}
Let $Z = (X,Y) \sim P_{X,Y}$ and $Z_i =(X_i,Y_i), i = 1,..,n$ are i.i.d copies  of $Z$. Let $\eta \sim P_{\eta}$ and $\eta \indep X$,   $\eta_j, j= 1,...n$ are i.i.d copies of $\eta$. %Then $W_{i,j} =(X_i,\eta_j)$
Then $W_{i} =(X_i,\eta_i)$
are i.i.d copies of $W  = (X,\eta)\sim P_{X}P_{\eta}.$
Let $S = (W,Z) \sim (P_{X}P_{\eta}) \bigotimes P_{X,Y}$ and
%$S_{i}, i=1,\ldots, n, j=1,\ldots, m$
let $S_{i} = (W_i,Z_i) = ((X_i,\eta_i),(X_i,Y_i)), i=1,\ldots, n$
be $n$ i.i.d copies of $S$. Denote $$b(G,D;S) =  D(X, G(\eta,X)) -   \exp(D(X,Y)).$$
		Then
		$$\mathcal{L}(G,D) = \mathbb{E}_{S} [b(G,D;S)]$$
		and
		$$ \widehat{\mathcal{L}}(G,D) = \frac{1}{n}\sum_{i=1}^n b(G,D;S_i).$$
Let $\epsilon_i, i =1,\ldots, n$ be i.i.d Rademacher random samples that are independent of $S_i, i = 1,\ldots,n.$
Denote the Rademacher  complexity of $ \cD\times \cG $  \citep{bartlett2002rademacher} by
		$$\mathcal{C}(\cD\times\cG) = \frac{1}{n} \mathbb{E}_{\{S_i, \epsilon_i\}_{i=1}^n}
\Big[\sup_{G \in \cG, D \in \cD}\Big|\sum_{i=1}^n\epsilon_i b(G,D;S_i)\Big|\Big].
$$

Let  $\mathfrak{C}(\cD\times \cG, e_{n,1}, \delta))$ be  the covering number of $ \cD\times \cG $ with respect to the  empirical  distance
$$e_{n,1}((G,D),(\tilde{G},\tilde{D})) = \frac{1}{n} \mathbb{E}_{\epsilon_i}\Big[\sum_{i =1}^n \Big|\epsilon_i (b(G,D;S_i) - b(\tilde{G},\tilde{D};S_i))\Big|\Big].$$

First, by the standard symmetrization technique and the law of iterated expectations, we have
\begin{eqnarray}
\label{eqn1}
\lefteqn{\sup_{D\in \cD, G\in \cG}|\cL(G,D)-\widehat{\cL}(G,D)|} \nonumber \\
 &=& 2\mathcal{C}(\cD\times\cG) \nonumber \\
&=& 2\mathbb{E}_{S_1,\ldots,S_n}
\big\{\mathbb{E}_{\epsilon_i,i=1,\ldots,n}
\big[\mathcal{C}(\cG\times\cD)|(S_1,...,S_n)\big]\big\}.
\end{eqnarray}

For $\delta>0$, let $\cD_{\delta}\times\cG_{\delta}$ be  such a covering set at scale $\delta$ of $ \cD\times \cG$.
Then, by the triangle inequality and Lemma \ref{2.9} below, we have
\begin{eqnarray}
\label{eqn2}
\lefteqn{\mathbb{E}_{S_1,\ldots,S_n}
\Big\{\mathbb{E}_{\epsilon_i,i=1,\ldots,n}[\mathcal{C}(\cG\times\cD)|
(S_1,...,S_n)]\big\}}  \\
&\leq &  \delta +  \frac{1}{n}\mathbb{E}_{S_1,\ldots,S_n}
\Big\{\mathbb{E}_{\epsilon_i,i=1,\ldots,n}
\Big[\sup_{(G,D)\in \cD_{\delta}\times\cG_{\delta}}\Big |\sum_{i=1}^n\epsilon_i b(G,D;S_i)\Big| \, \Big| (S_1,\ldots,S_n)\Big] \Big\}\nonumber \\
&\leq & 2\delta+C_{5}\frac{1}{n}\mathbb{E}_{S_1,\ldots,S_n}\Big\{\Big[\log \mathfrak{C}(\cD\times\cG, e_{n,1}, \delta)\Big]^{1/2} \max_{(G,D) \in \cD_{\delta}\times \cG_{\delta}}
\Big[\sum_{i = 1}^n  b^2(G,D;S_i)\Big]^{1/2}\Big\}. \nonumber
\end{eqnarray}
Since $\|b(G,D;S)\|_{{\infty}} \leq C_4,$ we have
\[
\Big[\sum_{i = 1}^n  b^2(G,D;S_i)\Big]^{1/2}
\le \sqrt{n}C_4.
\]
Therefore,
\begin{eqnarray}
\label{eqn3}
\lefteqn{\frac{1}{n}\mathbb{E}_{S_1,\ldots,S_n}\Big\{(\log \mathfrak{C}(\cD\times\cG, e_{n,1}, \delta))^{1/2} \max_{(G,D) \in \cD_{\delta}\times \cG_{\delta}}\Big [\sum_{i = 1}^n  b^2(G,D;S_i)\Big]^{1/2}\Big\}} \nonumber \\
		&\leq & \frac{1}{n} \mathbb{E}_{S_1,\ldots,S_n}\Big[(\log \mathfrak{C}(\cD\times\cG, e_{n,1}, \delta))^{1/2}\sqrt{n}C_4\Big] \nonumber \\
		&\leq& \frac{C_4}{\sqrt{n}}\Big[\log \mathfrak{C}(\cD, e_{n,1}, \delta)+\log \mathfrak{C}(\cG, e_{n,1}, \delta)\Big]^{1/2}.
\end{eqnarray}
Now since
$\mathfrak{C}(\cG,e_{n,1},\delta)\leq \mathfrak{C}(\cG,e_{n,\infty},\delta)$ (similar result for $\cD$) and
		$$\log \mathfrak{C}(\cG,e_{n,\infty},\delta)) \leq \mathrm{Pdim}_{\cG} \log \frac{2e\mathcal{B}n}{\delta\mathrm{Pdim}_{\cG}},$$
 where
$\mathrm{Pdim}_{\cG}$ is the Pseudo dimension  of $\cG_{\mathcal{H}, \mathcal{W}, \mathcal{S}, \mathcal{B}}$, which satisfies \citep{bartlett2019}
%and $\cG_{\mathcal{H}, \mathcal{W}, \mathcal{S}, \mathcal{B}}$ %satisfying
$$C_6 \mathcal{H}\mathcal{S}\log \mathcal{S}\leq \mathrm{Pdim}_{\cG} \leq C_7 \mathcal{H}\mathcal{S}\log \mathcal{S}.$$
Then, we  have
\begin{eqnarray}
\label{eqn4}
\lefteqn{\frac{1}{\sqrt{n}}\Big[\log \mathfrak{C}(\cD, e_{n,1}, \delta)+\log \mathfrak{C}(\cG, e_{n,1}, \delta)\Big]^{1/2}} \nonumber \\
&\le & \frac{1}{\sqrt{n}}\Big[\mathcal{H}\mathcal{S}\log \mathcal{S} \log \frac{\mathcal{B}n}{\delta \mathcal{H}\mathcal{S}\log \mathcal{S}} + \tilde{\mathcal{H}}\tilde{\mathcal{S}}\log \tilde{\mathcal{S}} \log \frac{\tilde{\mathcal{B}}n}{\delta \tilde{\mathcal{H}}\tilde{\mathcal{S}}\log \tilde{\mathcal{S}}} \Big]^{1/2}.
\end{eqnarray}
Then \eqref{sta} follows from (\ref{eqn1}) to (\ref{eqn4}) with  the selection of the network parameters of $ \cD_{\tilde{\mathcal{H}}, \tilde{\mathcal{W}}, \tilde{\mathcal{S}}, \tilde{\mathcal{B}}}, \cG_{\mathcal{H}, \mathcal{W}, \mathcal{S}, \mathcal{B}}$  and  with $\delta = \frac{1}{n}$.
\end{proof}

\begin{lemma}\label{app2}
$
\mathbb{E}_{(X_i,Y_i,\eta_i,)_{i=1}^n}[\Delta_1]\equiv \mathbb{E}_{(X_i,Y_i,\eta_i,)_{i=1}^n} [\sup_{D} \cL(\hat{G},D)  -  \sup_{D \in \cD} \cL(\hat{G},D)] \rightarrow 0 $ as  $n\rightarrow \infty.$
\end{lemma}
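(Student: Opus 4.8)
The plan is to handle $\Delta_1$ in three moves: derandomize so that the data-dependence of $\hat G$ disappears, reduce the claim to a deterministic statement about approximating the \emph{optimal} discriminators $\log r_G$ by networks in $\cD$, and then invoke the deep ReLU approximation bound (Lemma~\ref{appshen}) uniformly over $\cG$. Since $\hat G\in\cG$ by construction and $\Delta_1\ge 0$ (a supremum over $\cD$ is no larger than the unconstrained supremum), pointwise we have $\Delta_1=\Lbb(\hat G)-\sup_{D\in\cD}\cL(\hat G,D)\le \sup_{G\in\cG}\big[\Lbb(G)-\sup_{D\in\cD}\cL(G,D)\big]=:A_n$, and $A_n$ is nonrandom, so $\Ebb_{(X_i,Y_i,\eta_i)_{i=1}^n}[\Delta_1]\le A_n$; it therefore suffices to prove $A_n\to0$. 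For each fixed $G\in\cG$, Lemma~\ref{lem2} identifies the unconstrained maximizer of $\cL(G,\cdot)$ as $D_G^*=\log r_G$, and assumption (A2) says that (after the truncation to $E_2=[-B,B]^{d+q}$, $B=\log n$) $D_G^*$ is continuous on $E_2$ with $\|D_G^*\|_{L^\infty(E_2)}\le C_3$.

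Next I would apply Lemma~\ref{appshen} to $D_G^*$ on $E_2$, with the same network specifications as in the proof of Lemma~\ref{errsta}, to produce $\bar D_G\in\cD$ with $\|D_G^*-\bar D_G\|_{L^\infty(E_2)}\le 19\sqrt{d+q}\,\omega^{E_2}_{\log r_G}\!\big(2(\log n)n^{-1/(2+d+q)}\big)$ and $\|\bar D_G\|_{L^\infty(E_2)}\le \tilde{\mathcal B}=2C_3$. Then, writing $\cL(G,D)=\Ebb_{P_XP_\eta}[D(X,G(\eta,X))]-\Ebb_{P_{X,Y}}[\exp(D(X,Y))]$, using that $t\mapsto e^t$ is Lipschitz on $[-2C_3,2C_3]$ with constant $e^{2C_3}\le C_4$, and noting $(X,G(\eta,X))\in E_2$ a.s.\ (because $\|G\|_\infty\le\mathcal B=2C_0<B$ for $n$ large) and $(X,Y)\in E_2$ after truncation, I obtain $\Lbb(G)-\sup_{D\in\cD}\cL(G,D)\le \cL(G,D_G^*)-\cL(G,\bar D_G)\le (1+C_4)\|D_G^*-\bar D_G\|_{L^\infty(E_2)}+\rho_n$, where $\rho_n$ is the truncation remainder, bounded by a constant times $\mathbb{P}((X,\eta)\notin E_1)+\mathbb{P}((X,Y)\notin E_2)\to0$ since the integrands are uniformly bounded and $B=\log n\to\infty$. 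Taking the supremum over $G\in\cG$ gives $A_n\le (1+C_4)\cdot 19\sqrt{d+q}\,\sup_{G\in\cG}\omega^{E_2}_{\log r_G}(\delta_n)+\rho_n$ with $\delta_n:=2(\log n)n^{-1/(2+d+q)}\to0$.

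The remaining step, and the main obstacle, is to show $\sup_{G\in\cG}\omega^{E_2}_{\log r_G}(\delta_n)\to0$. The deep-network approximation theorem controls the error for a \emph{single} fixed continuous function through its own modulus of continuity, whereas here the relevant function $\log r_{\hat G}$ is data-dependent and ranges over the entire class $\cG$; converting the per-function estimate into a uniform one requires $\{\log r_G:G\in\cG\}$ to share a common modulus of continuity, i.e.\ to be equicontinuous, rather than merely each member being continuous. I would extract this from assumption (A2), reading the continuity requirement as equicontinuity of the density ratios $r_G$ on compacta (which is natural since the generators in $\cG$ are uniformly bounded ReLU maps) together with the uniform lower bound $r_G\ge C_1>0$, under which composition with $\log$ preserves equicontinuity; this yields a single modulus $\omega(\delta):=\sup_{G\in\cG}\omega^{E_2}_{\log r_G}(\delta)\to0$ as $\delta\to0$. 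With that in hand, $A_n\to0$, hence $\Ebb[\Delta_1]\to0$. A secondary technical nuisance to flag is that the cube $E_2=[-\log n,\log n]^{d+q}$ grows with $n$, so one must be careful that all the sup-norm comparisons above and the bound on $\rho_n$ remain valid along this growing truncation.
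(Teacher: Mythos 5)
Your proposal is correct and follows essentially the same route as the paper: identify the unconstrained maximizer of $\cL(\hat G,\cdot)$ as $\log r_{\hat G}$, use (A2) for its continuity and the bound $\|\log r_{\hat G}\|_\infty\le C_3$, and approximate it in sup-norm by an element of $\cD$ via the deep ReLU approximation result (Lemma~\ref{appshen}), then transfer the sup-norm error to the loss. Your write-up is in fact more careful than the paper's own proof, which conditions on the data and concludes ``by continuity'' without confronting the point you flag explicitly --- that the modulus of continuity of $\log r_{\hat G}$ is data-dependent, so passing to a deterministic bound (your $A_n$, a supremum over $G\in\cG$) requires reading (A2) as equicontinuity of $\{\log r_G: G\in\cG\}$ on $E_2$ for the approximation rate to hold uniformly.
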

\begin{proof}
 Conditioning on the data $(X_i,Y_i,\eta_i,)_{i=1}^n$, the supremum of $\sup_{D} \cL(\hat{G},D) $ is attained at
$D_{\hat{G}}(z) = \log r_{\hat{G}}(z)$ with $$
r_{\hat{G}}(z) = \frac{p_{X,\hat{G}(\eta,X)}(z)}{p_{X,Y}(z)}.
$$
%= \log (\frac{p_{X,\hat{G}(\eta,X)}(z)}{p_{X,Y}(z)})$
By assumption, $D_{\hat{G}}(z)$ is   continuous on $[-B,B]^{d+q}$  and $\|D_{\hat{G}}\|_{L^{\infty}} \leq C_3$.
%Then the rest of the proof of  Lemma \ref{app2} are similar to that of Lemma \ref{app1} and the first part of Lemma \ref{errsta}.
Then similar to the proof of (\ref{appd}),
 in  Lemma \ref{errsta}, there exist $\hat{D}_{\phi} \in \mathcal{D}$ such that
 $
\|D_{\hat{G}}-\hat{D}_{\phi}\|_{L^{\infty}(E_2)} \rightarrow 0,
$ as $n\rightarrow \infty.$
Then $$0<\sup_{D} \cL(\hat{G},D)  -  \sup_{D \in \cD} \cL(\hat{G},D)\leq  \cL(\hat{G},D_{\hat{G}})  -   \cL(\hat{G},\hat{D}_{\phi})\rightarrow 0,$$
by continuity.
\end{proof}

\medskip\noindent
\textbf{Proof of Theorem \ref{consistency}}
\begin{proof} The theorem follows from (\ref{errl1s}) ,  (\ref{errdecs}) and Lemmas \ref{app1} - \ref{app2}.
\end{proof}

\medskip\noindent
\textbf{Proof of Corollary \ref{convergb}}
\begin{proof} Let
\[
\Delta(P_{X,\hat{G}_{\vtheta}}, P_{X, Y})
= \Ebb_{X\sim P_X} \left[\int_{\cY} \left|
 p_{\hat{G}_{\vtheta}}(y| X)-p_{Y|X}(y| X)\right| dy\right]   .
 \]
 We have
 \begin{eqnarray*}
 \Delta(P_{X,\hat{G}_{\vtheta}}, P_{X, Y})
 &=& \int_{\cX\times \cY} \left|p_{\hat{G}_{\vtheta}}(y| x)-p_{Y|X}(y| x)\right| p_X(x) dx dy \\
 &=& \int_{\cX\times \cY}  \left|p_{X,\hat{G}_{\vtheta}}(x,y)-p_{X,Y}(x,y)\right|  dx dy.
 \end{eqnarray*}
 It follows from Theorem \ref{consistency} that
 \[
 \Ebb_{\{X_i, Y_i\}_{i=1}^n} \Delta(P_{X,\hat{G}_{\vtheta}}(\eta, X), P_{X, Y})  \to 0.
 \]
 So Corollary 4.1 %\ref{convergb}
 follows from the Markov inequality.

\end{proof}

Finally, we state the two lemmas used in the proofs above for ease of reference,
The first lemma is used in the proof of Lemma \ref{errsta}.
\begin{lemma}\label{2.9}
			If $\xi_{i}, i = 1,...m$ are $m$ finite linear combinations of Rademacher variables $\epsilon_j, j=1,..J$.
			Then
			\begin{equation}\label{rma}
			\mathbb{E}_{\epsilon_j,j=1,...J} \max _{1\leq i \leq m} |\xi_{i}| \leq C_6 (\log m)^{1/2} \max _{1\leq i \leq m}\left(\mathbb{E} \xi_{i}^{2}\right)^{1 / 2}
			\end{equation}
for some constant $C_6 > 0.$
		\end{lemma}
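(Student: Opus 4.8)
The plan is to recognize \eqref{rma} as the classical sub-Gaussian maximal inequality: each $\xi_i$, being a linear combination of independent Rademacher signs, is sub-Gaussian with variance proxy equal to its own second moment $\Ebb\xi_i^2$, and the expected maximum of $m$ sub-Gaussian variables scales like $\sqrt{\log m}$ times the largest proxy. I would carry this out in two steps.

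\textbf{Step 1: sub-Gaussian moment bound for a Rademacher combination.} Write $\xi_i=\sum_{j=1}^J a_{ij}\epsilon_j$ with real coefficients $a_{ij}$, so that by independence of the $\epsilon_j$ and $\Ebb\epsilon_j^2=1$ we have $\sigma_i^2:=\Ebb\xi_i^2=\sum_{j=1}^J a_{ij}^2$. For each $j$, the variable $a_{ij}\epsilon_j$ takes the two values $\pm a_{ij}$ with equal probability, so $\Ebb\exp(t\,a_{ij}\epsilon_j)=\cosh(t a_{ij})\le\exp(t^2 a_{ij}^2/2)$ for all $t\in\real$ (Hoeffding's lemma, or the elementary inequality $\cosh x\le e^{x^2/2}$). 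Taking the product over $j$ using independence yields the sub-Gaussian bound
\[
\Ebb\exp(t\xi_i)\le \exp\!\Big(\frac{t^2}{2}\sum_{j=1}^J a_{ij}^2\Big)=\exp\!\Big(\frac{t^2\sigma_i^2}{2}\Big),\qquad t\in\real .
\]

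\textbf{Step 2: the maximal inequality.} Put $\sigma:=\max_{1\le i\le m}\sigma_i=\max_{1\le i\le m}(\Ebb\xi_i^2)^{1/2}$. For any $t>0$, Jensen's inequality for $x\mapsto e^{tx}$, the crude bound $e^{t|\xi_i|}\le e^{t\xi_i}+e^{-t\xi_i}$, a union bound over $i$, and Step 1 give
\[
\exp\!\big(t\,\Ebb\max_{i\le m}|\xi_i|\big)\le \Ebb\max_{i\le m}e^{t|\xi_i|}\le \sum_{i=1}^m\big(\Ebb e^{t\xi_i}+\Ebb e^{-t\xi_i}\big)\le 2m\,e^{t^2\sigma^2/2}.
\]
Taking logarithms and dividing by $t$ gives $\Ebb\max_{i\le m}|\xi_i|\le \log(2m)/t+t\sigma^2/2$; choosing $t=\sqrt{2\log(2m)}/\sigma$ (and noting the inequality is trivial when $\sigma=0$) yields $\Ebb\max_{i\le m}|\xi_i|\le\sigma\sqrt{2\log(2m)}$. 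Since $\log(2m)\le 2\log m$ for $m\ge2$, this is exactly \eqref{rma} with, say, $C_6=2$; the degenerate case $m=1$ is covered separately by $\Ebb|\xi_1|\le(\Ebb\xi_1^2)^{1/2}$ (and in the application the relevant $m$ is a covering number, hence at least $2$).

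I do not anticipate a genuine obstacle here: this is a textbook maximal inequality, and the only points that need mild care are verifying the moment-generating-function bound for linear combinations whose coefficients have \emph{arbitrary} magnitude --- which Hoeffding's lemma handles since each summand is bounded --- and the bookkeeping of the universal constant $C_6$ together with the harmless small-$m$ edge case.
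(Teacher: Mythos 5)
Your proof is correct. The paper itself does not write out an argument: it disposes of the lemma in one line by citing Corollary 3.2.6 and inequality (4.3.1) of de la Pe\~na and Gin\'e's decoupling monograph with $\Phi(x)=\exp(x^2)$, i.e.\ it combines a Khintchine-type bound showing that a Rademacher linear combination has $\psi_2$-Orlicz norm controlled by $(\Ebb\xi_i^2)^{1/2}$ with the general Orlicz-norm maximal inequality, whose $\Phi^{-1}(m)$ factor produces the $(\log m)^{1/2}$. Your route is the self-contained elementary version of the same fact: Hoeffding's lemma gives the sub-Gaussian moment generating function bound $\Ebb e^{t\xi_i}\le e^{t^2\sigma_i^2/2}$, and the standard Chernoff--union-bound computation with the optimal choice of $t$ yields the maximal inequality with an explicit constant. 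The two arguments rest on the identical sub-Gaussianity of Rademacher sums; yours buys a short, citation-free proof with a concrete $C_6$, while the paper's buys brevity at the cost of importing the Orlicz machinery. One small point worth keeping, which you already flag: as stated the inequality is vacuous at $m=1$ (since $(\log 1)^{1/2}=0$), so the degenerate case must either be excluded or absorbed by replacing $\log m$ with $\log(1+m)$; in the application $m$ is a covering number, so this is harmless.
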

		\begin{proof}
			This result follows directly from Corollary 3.2.6 and  inequality (4.3.1)  in \cite{de2012decoupling} with $\Phi(x) = \exp(x^2).$
\end{proof}

%\iffalse
The following approximation result about neural networks \citep{shen2019deep} is used in the proof of Lemma \ref{app1}.
\begin{lemma}\label{appshen}
 Let $f$ be a uniformly continuous function defined on $E \subseteq[-R, R]^{d}$. For arbitrary $L \in \mathbb{N}^{+}$ and $N \in \mathbb{N}^{+},$ there exists a function  ReLU network $f_{\phi}$ with width $3^{d+3} \max \left\{d\left\lfloor N^{1 / d}\right\rfloor, N+1\right\}$ and depth $12 L+14+2 d$ such that
$$
\|f-f_{\phi}\|_{L^{\infty}(E)} \leq 19 \sqrt{d} \omega_{f}^{E}\left(2 R N^{-2 / d} L^{-2 / d}\right),
$$
where, $\omega_{f}^{E}(t)$  is the  modulus of continuity of $f$ satisfying $\omega_{f}^{E}(t)\rightarrow 0$ as $t\rightarrow 0^+.$
\end{lemma}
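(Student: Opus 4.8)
The plan is to follow the constructive neuron-counting approach of \citet{shen2019deep}, building the approximant $f_\phi$ explicitly and then tracking the width, depth, and error constants through each stage. First I would normalize the domain: the affine change of variables $x \mapsto (x + R\mathbf{1})/(2R)$ sends $[-R,R]^d$ to $[0,1]^d$, and extending $f$ from $E$ to a uniformly continuous function on all of $[-R,R]^d$ (via a McShane-type extension that preserves the modulus of continuity) reduces the problem to approximating a uniformly continuous $g$ on $[0,1]^d$, with the understanding that a displacement of size $t$ in the rescaled variable corresponds to a displacement of size $2Rt$ in the original variable. This is the source of the factor $2R$ inside $\omega_f^E(\cdot)$ in the final bound.

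Next I would fix an integer resolution $K$ with $K \asymp N^{2/d}L^{2/d}$, chosen so that $K^d$ matches the number of scalar values a width-$N$, depth-$L$ ReLU network can encode, and partition $[0,1]^d$ into $K^d$ congruent subcubes of side $1/K$. The target is the piecewise-constant function $g_K$ equal to $g$ at a fixed representative point of each subcube; on points in the interior of their cells one has $\|g - g_K\|_\infty \le \omega_g(\sqrt d/K)$, which after undoing the rescaling and the constant bookkeeping yields the stated $19\sqrt d\,\omega_f^E(2RN^{-2/d}L^{-2/d})$ bound. The network is assembled from two pieces. The first is a coordinatewise \emph{quantizer}: for each coordinate $x_j$ I would build a ReLU subnetwork realizing the staircase map sending $x_j \in [0,1]$ to the index of its subinterval among the $K$ equal pieces. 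Using the sawtooth-composition trick, $\lfloor N^{1/d}\rfloor$ neurons per layer over $O(L)$ layers produce $\asymp N^{2/d}L^{2/d} \asymp K$ plateaus, so the $d$ quantizers together require width $\asymp d\lfloor N^{1/d}\rfloor$ and depth $\asymp L$; a linear combination of the $d$ coordinate indices yields a single cell index in $\{0,\dots,K^d-1\}$. The $+2d$ in the depth and part of the width arise from wiring these $d$ quantizers.

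The second and decisive piece is a \emph{value-assignment} (point-fitting) network that interpolates the $K^d$ prescribed values $g(\text{representative})$ at the $K^d$ integer indices. Here I would invoke the bit-extraction technique: the target values are quantized, their binary digits packed into a single real attached to each index, and then peeled off one bit per layer, so that a network of width $\asymp N$ and depth $\asymp L$ realizes $K^d \asymp N^2L^2$ arbitrary values. This is exactly where the quadratic-in-both-$N$-and-$L$ capacity is spent. The overall width is then the larger of the two subnetwork widths, namely $3^{d+3}\max\{d\lfloor N^{1/d}\rfloor, N+1\}$ after absorbing the $3^{d+3}$ constants from parallelizing and composing the pieces, and the total depth comes out to $12L+14+2d$.

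The hard part will be obtaining a genuine $L^\infty$ bound over all of $E$, including the thin boundary strips near the cell faces where a continuous ReLU map cannot reproduce a sharp step. The usual remedy is a trifling-region argument: the quantizer is designed to be exact outside a union of boundary slabs of small total measure, and the error inside these slabs is controlled by observing that adjacent cells carry values differing by at most $\omega_g(\sqrt d/K)$, so the modulus of continuity again bounds the discrepancy. Making this estimate uniform rather than merely in $L^p$, while keeping the width exactly $O(N)$ and the depth exactly $O(L)$, is the technically delicate step; it is precisely the content of the construction in \citet{shen2019deep}, whose constants $19\sqrt d$, $3^{d+3}$, and $12L+14+2d$ I would reproduce by carrying the bookkeeping through each of the stages above.
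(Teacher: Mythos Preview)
Your proposal is a correct and detailed sketch of the construction underlying this approximation result, following the strategy of \citet{shen2019deep}: domain rescaling, partitioning into $K^d$ subcubes with $K \asymp N^{2/d}L^{2/d}$, coordinatewise quantizers built from sawtooth compositions, a bit-extraction value-assignment subnetwork, and a trifling-region argument to control the boundary strips. These are exactly the right ingredients, and the places where you expect the constants $19\sqrt d$, $3^{d+3}$, and $12L+14+2d$ to emerge are accurate.

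However, the paper itself does not prove this lemma at all: its entire proof reads ``This is Theorem 4.3 in \cite{shen2019deep}.'' The lemma is simply quoted as an external input and cited, not re-derived. So while your proposal is substantively correct as a proof of the underlying result, it goes far beyond what the paper does. The paper treats the statement as a black box from the literature; you are outlining how to open the box. If the goal is to match the paper, a one-line citation suffices; if the goal is a self-contained proof, your sketch is on the right track but you would then need to actually execute the bookkeeping (particularly the $L^\infty$ control on the trifling region and the exact neuron counts through the bit-extraction step) rather than defer it back to \citet{shen2019deep}, which as written your final paragraph still does.
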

\begin{proof}
This is  Theorem 4.3 in  \cite{shen2019deep}.
\end{proof}
%\end{proof}
%\fi

\begin{singlespace}
\bibliography{gcds_bib}

\begin{thebibliography}{}

\bibitem[Abadi et~al., 2016]{abadi2016tensorflow}
Abadi, M., Barham, P., Chen, J., Chen, Z., Davis, A., Dean, J., Devin, M.,
  Ghemawat, S., Irving, G., Isard, M., et~al. (2016).
\newblock Tensorflow: A system for large-scale machine learning.
\newblock In {\em 12th USENIX Symposium on Operating Systems Design and
  Implementation (OSDI 16)}, pages 265--283.

\bibitem[Ali and Silvey, 1966]{ali1966general}
Ali, S.~M. and Silvey, S.~D. (1966).
\newblock A general class of coefficients of divergence of one distribution
  from another.
\newblock {\em Journal of the Royal Statistical Society: Series B
  (Methodological)}, 28(1):131--142.

\bibitem[Austin, 2015]{austin2015}
Austin, T. (2015).
\newblock Exchangeable random measures.
\newblock {\em Annales de l'I.H.P. Probabilit\'es et statistiques},
  51(3):842--861.

\bibitem[Bartlett et~al., 2019]{bartlett2019}
Bartlett, P.~L., Harvey, N., Liaw, C., and Mehrabian, A. (2019).
\newblock Nearly-tight vc-dimension and pseudodimension bounds for piecewise
  linear neural networks.
\newblock {\em Journal of Machine Learning Research}, 20:1--17.

\bibitem[Bartlett and Mendelson, 2002]{bartlett2002rademacher}
Bartlett, P.~L. and Mendelson, S. (2002).
\newblock Rademacher and gaussian complexities: Risk bounds and structural
  results.
\newblock {\em Journal of Machine Learning Research}, 3:463--482.

\bibitem[Bauer and Kohler, 2019]{bauer2019deep}
Bauer, B. and Kohler, M. (2019).
\newblock On deep learning as a remedy for the curse of dimensionality in
  nonparametric regression.
\newblock {\em Ann. Statist.}, 47(4):2261--2285.

\bibitem[Bengio et~al., 2013]{bengio2013representation}
Bengio, Y., Courville, A., and Vincent, P. (2013).
\newblock Representation learning: A review and new perspectives.
\newblock {\em IEEE Transactions on Pattern Analysis and Machine Intelligence},
  35(8):1798--1828.

\bibitem[Bhattacharya and Gangopadhyay, 1990]{bhattacharya1990}
Bhattacharya, P.~K. and Gangopadhyay, A.~K. (1990).
\newblock Kernel and nearest-neighbor estimation of a conditional quantile.
\newblock {\em Ann. Statist.}, 18(3):1400--1415.

\bibitem[Bishop, 2006]{bishop2006}
Bishop, C. (2006).
\newblock {\em Pattern Recognition and Machine Learning}.
\newblock Springer.

\bibitem[Bott and Kohler, 2017]{bott2017}
Bott, A.-K. and Kohler, M. (2017).
\newblock Nonparametric estimation of a conditional density.
\newblock {\em Annals of the Institute of Statistical Mathematics},
  69(1):189--214.

\bibitem[Chen et~al., 2019]{chen2019nonparametric}
Chen, M., Jiang, H., Liao, W., and Zhao, T. (2019).
\newblock Nonparametric regression on low-dimensional manifolds using deep relu
  networks.
\newblock {\em arXiv:1908.01842}.

\bibitem[Chen and Linton, 2001]{chen2001}
Chen, X. and Linton, O. (2001).
\newblock The estimation of conditional densities.
\newblock {\em In Asymptotics in Statistics and Probability, Festschrift for
  George Roussas, ed. M.L. Puri.}

\bibitem[Coates et~al., 2011]{coates11a}
Coates, A., Ng, A., and Lee, H. (2011).
\newblock An analysis of single-layer networks in unsupervised feature
  learning.
\newblock In {\em Proceedings of the Fourteenth International Conference on
  Artificial Intelligence and Statistics}, volume~15 of {\em Proceedings of
  Machine Learning Research}, pages 215--223. JMLR Workshop and Conference
  Proceedings.

\bibitem[Cook, 1998]{cook1998regression}
Cook, R. (1998).
\newblock {\em Regression Graphics: Ideas for Studying Regressions Through
  Graphics}.
\newblock Wiley Series in Probability and Statistics. Wiley.

\bibitem[Dalmasso et~al., 2020]{dalmasso2020conditional}
Dalmasso, N., Pospisil, T., Lee, A.~B., Izbicki, R., Freeman, P.~E., and Malz,
  A.~I. (2020).
\newblock Conditional density estimation tools in python and r with
  applications to photometric redshifts and likelihood-free cosmological
  inference.
\newblock {\em Astronomy and Computing}, page 100362.

\bibitem[De~la Pena and Gin{\'e}, 2012]{de2012decoupling}
De~la Pena, V. and Gin{\'e}, E. (2012).
\newblock {\em Decoupling: from Dependence to Independence}.
\newblock Springer Science \& Business Media.

\bibitem[Dua and Graff, 2017]{Dua:2019}
Dua, D. and Graff, C. (2017).
\newblock {UCI} machine learning repository.
\newblock {\em https://archive.ics.uci.edu/ml/index.php}.

\bibitem[Fan et~al., 1996]{fan1996}
Fan, J., Yao, Q., and Tong, H. (1996).
\newblock Estimation of conditional densities and sensitivity measures in
  nonlinear dynamical systems.
\newblock {\em Biometrika}, 83(1):189--206.

\bibitem[Fan and Yim, 2004]{fan2004}
Fan, J. and Yim, T.~H. (2004).
\newblock A crossvalidation method for estimating conditional densities.
\newblock {\em Biometrika}, 91(4):819--834.

\bibitem[Goodfellow et~al., 2014]{goodfellow14}
Goodfellow, I., Pouget-Abadie, J., Mirza, M., Xu, B., Warde-Farley, D., Ozair,
  S., Courville, A., and Bengio, Y. (2014).
\newblock Generative adversarial nets.
\newblock In {\em Advances in Neural Information Processing Systems},
  volume~27, pages 2672--2680.

\bibitem[Hall et~al., 2004]{hall2004cross}
Hall, P., Racine, J., and Li, Q. (2004).
\newblock Cross-validation and the estimation of conditional probability
  densities.
\newblock {\em Journal of the American Statistical Association},
  99(468):1015--1026.

\bibitem[Hall and Yao, 2005]{hall2005}
Hall, P. and Yao, Q. (2005).
\newblock Approximating conditional distribution functions using dimension
  reduction.
\newblock {\em Annals of Statististics}, 33(3):1404--1421.

\bibitem[Hyndman et~al., 1996]{hyndman1996estimating}
Hyndman, R.~J., Bashtannyk, D.~M., and Grunwald, G.~K. (1996).
\newblock Estimating and visualizing conditional densities.
\newblock {\em Journal of Computational and Graphical Statistics},
  5(4):315--336.

\bibitem[Hyv{\"a}rinen and Pajunen, 1999]{hyva1999}
Hyv{\"a}rinen, A. and Pajunen, P. (1999).
\newblock Nonlinear independent component analysis: Existence and uniqueness
  results.
\newblock {\em Neural Networks}, 12(3):429--439.

\bibitem[Iizuka et~al., 2017]{iizuka2017globally}
Iizuka, S., Simo-Serra, E., and Ishikawa, H. (2017).
\newblock Globally and locally consistent image completion.
\newblock {\em ACM Transactions on Graphics (ToG)}, 36(4):1--14.

\bibitem[Izbicki and Lee, 2016]{izbicki2016nonparametric}
Izbicki, R. and Lee, A.~B. (2016).
\newblock Nonparametric conditional density estimation in a high-dimensional
  regression setting.
\newblock {\em Journal of Computational and Graphical Statistics},
  25(4):1297--1316.

\bibitem[Izbicki et~al., 2017]{izbicki2017converting}
Izbicki, R., Lee, A.~B., et~al. (2017).
\newblock Converting high-dimensional regression to high-dimensional
  conditional density estimation.
\newblock {\em Electronic Journal of Statistics}, 11(2):2800--2831.

\bibitem[Jiao et~al., 2021]{jiao2021deep}
Jiao, Y., Shen, G., Lin, Y., and Huang, J. (2021).
\newblock Deep nonparametric regression on approximately low-dimensional
  manifolds.
\newblock {\em arXiv 2104.06708}.

\bibitem[Kallenberg, 2002]{kall2002}
Kallenberg, O. (2002).
\newblock {\em Foundations of Modern Probability}.
\newblock Springer-Verlag, New York, 2nd edition.

\bibitem[Keziou, 2003]{keziou2003dual}
Keziou, A. (2003).
\newblock Dual representation of $\varphi$-divergences and applications.
\newblock {\em Comptes Rendus Math{\'e}matique}, 336(10):857--862.

\bibitem[Kingma and Ba, 2015]{kingma2015adam}
Kingma, D.~P. and Ba, J. (2015).
\newblock Adam: A method for stochastic optimization.
\newblock In {\em Proceedings of the 3rd International Conference on Learning
  Representation}.

\bibitem[Kohler and Langer, 2020]{kohler2020rate}
Kohler, M. and Langer, S. (2020).
\newblock On the rate of convergence of fully connected very deep neural
  network regression estimates.
\newblock {\em arXiv 1908.11133. (to appear in Annals of Statistics)}.

\bibitem[LeCun et~al., 2010]{mnist}
LeCun, Y., Cortes, C., and Burges, C. (2010).
\newblock Mnist handwritten digit database.
\newblock {\em AT\&T Labs [Online]. Available: http://yann. lecun.
  com/exdb/mnist}, 2.

\bibitem[Li, 1991]{li1991sliced}
Li, K.-C. (1991).
\newblock Sliced inverse regression for dimension reduction.
\newblock {\em Journal of the American Statistical Association},
  86(414):316--327.

\bibitem[Mirza and Osindero, 2014]{mirza2014cgan}
Mirza, M. and Osindero, S. (2014).
\newblock Conditional generative adversarial nets.
\newblock cite arxiv:1411.1784.

\bibitem[Nakada and Imaizumi, 2019]{nakada2019adaptive}
Nakada, R. and Imaizumi, M. (2019).
\newblock Adaptive approximation and estimation of deep neural network with
  intrinsic dimensionality.
\newblock {\em arXiv:1907.02177}.

\bibitem[Nguyen et~al., 2010]{nguyen2010estimating}
Nguyen, X., Wainwright, M.~J., and Jordan, M.~I. (2010).
\newblock Estimating divergence functionals and the likelihood ratio by convex
  risk minimization.
\newblock {\em IEEE Transactions on Information Theory}, 56(11):5847--5861.

\bibitem[Nowozin et~al., 2016]{nowozin2016f}
Nowozin, S., Cseke, B., and Tomioka, R. (2016).
\newblock f-gan: Training generative neural samplers using variational
  divergence minimization.
\newblock In {\em Advances in neural information processing systems}, pages
  271--279.

\bibitem[Rockafellar, 1970]{rockafellar1970convex}
Rockafellar, T.~R. (1970).
\newblock {\em Convex analysis}.
\newblock Princeton University Press.

\bibitem[Rosenblatt, 1969]{rosenblatt1969}
Rosenblatt, M. (1969).
\newblock Conditional probability density and regression estimators.
\newblock In {\em In Multivariate Analysis II, Ed. P. R. Krishnaiah}, pages
  25--31, New York. Academic Press, New York.

\bibitem[Schmidhuber, 2015]{Schmidhuber_2015}
Schmidhuber, J. (2015).
\newblock Deep learning in neural networks: An overview.
\newblock {\em Neural Networks}, 61:85–117.

\bibitem[Schmidt-Hieber, 2020]{schmidt2020nonparametric}
Schmidt-Hieber, J. (2020).
\newblock {N}onparametric regression using deep neural networks with {R}e{LU}
  activation function (with discussion).
\newblock {\em Ann. Statist.}, 48(4):1916--1921.

\bibitem[Scott, 1992]{scott1992}
Scott, D.~W. (1992).
\newblock {\em Multivariate Density Estimation: Theory, Practice and
  Visualization}.
\newblock Wiley, New York.

\bibitem[Shen et~al., 2021a]{shen2021quantile}
Shen, G., Jiao, Y., Lin, Y., Horowitz, J.~L., and Huang, J. (2021a).
\newblock Deep quantile regression: Mitigating the curse of dimensionality
  through composition.
\newblock {\em arXiv 2107.04907}.

\bibitem[Shen et~al., 2021b]{shen2021robust}
Shen, G., Jiao, Y., Lin, Y., and Huang, J. (2021b).
\newblock Robust nonparametric regression with deep neural networks.
\newblock {\em arXiv 2107.10343}.

\bibitem[Shen et~al., 2020]{shen2019deep}
Shen, Z., Yang, H., and Zhang, S. (2020).
\newblock Deep network approximation characterized by number of neurons.
\newblock {\em Commun. Comput. Phys.}, 28(5):1768--1811.

\bibitem[Sugiyama et~al., 2010]{sugi2010}
Sugiyama, M., Takeuchi, I., Suzuki, T., Kanamori, T., Hachiya, H., and
  Okanohara, D. (2010).
\newblock Least-squares conditional density estimation.
\newblock {\em IEICE Transactions on Information and Systems}, 93(3):583--594.

\bibitem[Tsybakov, 2008]{npe2008}
Tsybakov, A. (2008).
\newblock {\em Introduction to Nonparametric Estimation}.
\newblock Springer Science \& Business Media.

\bibitem[Van~der Vaart and Wellner, 1996]{vw1996}
Van~der Vaart, A.~W. and Wellner, J.~A. (1996).
\newblock {\em Weak Convergence and Empirical Processes: with Applications to
  Statistics}.
\newblock Springer, New York.

\bibitem[Zhao and Liu, 1985]{zhao1985}
Zhao, L. and Liu, Z. (1985).
\newblock Strong consistency of the kernel estimators of conditional density
  function.
\newblock {\em Acta Mathematica Sinica}, 1:314--318.

\end{thebibliography}
\end{singlespace}

\end{document}